\documentclass[11pt,a4paper]{article}
\usepackage[applemac]{inputenc}

\usepackage[english]{babel}
\usepackage[a4paper,left=2.5cm,right=4cm,top=2.5cm,bottom=2.5cm]{geometry}
\usepackage{microtype}
\usepackage{xurl}
\usepackage{amsmath}
\usepackage{stmaryrd,xcolor}
\usepackage[normalem]{ulem}
\usepackage{color}
\usepackage{float} 
\usepackage{amsfonts}
\usepackage{amssymb,amsthm}
\usepackage{hyperref} 
\usepackage{cleveref} 
\usepackage{graphicx,abstract}
\usepackage{mathrsfs}

\def\m{\mu}

\newcommand{\extr}{{\rm Ext}}
\newcommand{\exti}{{\rm Ext}_i}
\newcommand{\extun}{{\rm Ext}_1}

\newcommand{\vall}{\mathrm{Val}}
\newcommand{\vol}{\mathrm{vol}}

\newcommand{\NNN}{{\mathcal N}}
\newcommand{\FFF}{{\mathcal F}}
\newcommand{\M}{{\mathbb M}}

\newcommand{\Sph}{S(0,1)}

\newcommand{\radproj}{\rho\,} % radial projection symbol

\newcommand{\ds}{\displaystyle}

\newcommand{\tal}{\mathrm{Tal}}

\newcommand{\T}{{\mathrm{talw\,}}}
\newcommand{\crit}{\mathrm{crit}}
\newcommand{\argminloc}{\mathrm{argmin}\text{-}\mathrm{loc}\,}

\newcommand{\tr}{\mbox{{\rm tr}}}

\numberwithin{equation}{section}
\newtheorem{theorem}{Theorem}[section]
\newtheorem{proposition}[theorem]{Proposition}
\newtheorem{corollary}[theorem]{Corollary}
\newtheorem{lemma}[theorem]{Lemma}

\theoremstyle{definition}
\newtheorem{remark}[theorem]{Remark}
\newtheorem{definition}[theorem]{Definition}

\newtheorem{assumption}[theorem]{Assumption}

\newtheorem{claim}{Claim}

\newenvironment{proof*}{\noindent{\bf Proof.}}{\qed}

\title{Gradient extremals, talwegs, valleys, and directional alignment for generic gradient descent} 

\author{%
Pascal B\'egout\thanks{%
Toulouse School of Economics, Universit\'e Toulouse Capitole,
Institut de Math\'ematiques de Toulouse, Toulouse,
France, \href{mailto:Pascal.Begout@math.cnrs.fr}{Pascal.Begout@math.cnrs.fr}, \href{https://orcid.org/0000-0002-9172-3057}{https://orcid.org/0000-0002-9172-3057}.}%
\and
J\'er\^ome Bolte\thanks{%
Toulouse School of Economics, Universit\'e Toulouse Capitole, \'Ecole Normale Sup\'erieure, France, \href{mailto:jbolte@ut-capitole.fr}{jbolte@ut-capitole.fr}.}
\and
Thomas Mariotti\thanks{%
Toulouse School of Economics, CNRS, University of Toulouse Capitole, Toulouse, France, CEPR, and CESifo. \href{mailto:thomas.mariotti@tse-fr.eu}{thomas.mariotti@tse-fr.eu}.}
\and
Francisco J. Silva\thanks{%
Universit\'e de Limoges, XLIM, UMR CNRS 7252, France, \href{mailto:francisco.silva@unilim.fr}{francisco.silva@unilim.fr}.}%
}

\def\dd{{\rm d}}

\def\weight(#1,#2){c_{#1,#2}}
 \if {
  
 } \fi
\if {

}{{\caption}}
} \fi

\def\D{\mathcal{D}}

\def\M{\mathcal{M}}

\def\T{\mathcal{T}}

\def\eps{\varepsilon}

\def\det{\mathop{\rm det}}

\def\dist{\mathop{\rm dist}}

\def\half{\mbox{$\frac{1}{2}$}}

\def\1B{{\bf  1}}

\newcommand{\NN}{\mathbb{N}}

\newcommand{\RR}{\mathbb{R}}

\newcommand\be{\begin{equation}}
\newcommand\ee{\end{equation}}
\newcommand\ba{\begin{array}}
\newcommand\ea{\end{array}}
\newcommand{\bean}{\begin{eqnarray*}}
\newcommand{\eean}{\end{eqnarray*}}

\def\ds{\displaystyle}

\begin{document}

\maketitle

\begin{abstract} 
Gradient extremals are loci along which the gradient is an eigenvector of the Hessian. These objects provide a natural geometric framework connecting several notions, notably valleys and talwegs, which we analyze from a variational viewpoint in the generic case. We then show that trajectories of the gradient flow and of its discrete counterpart exhibit directional alignment with the tangent spaces to gradient extremals, and generically to the talweg. Under non-resonance assumptions, and in contrast with the quadratic case, alignment rates are governed either by the first spectral gap or by the smallest eigenvalue of the Hessian at the limit point. Nonlinearities and the step length may both distort these rates in a complex manner. We further prove a volume concentration phenomenon emphasizing the structuring role of gradient extremals: for large times, the images of sets of initial conditions concentrate inside valleys and asymptotically around talwegs.
\end{abstract}
\maketitle

\section{Introduction}\label{introduction}

Cauchy's gradient descent method is central in many branches of mathematics and has gained renewed prominence because of its fundamental role in applications such as modern machine learning, see e.g. \cite{bach2024learning}, or optimal transport and PDEs, see e.g. \cite{santambrogio20151}. 

In this paper we investigate the directional properties of the gradient descent dynamics and of its discrete counterpart. This is not a new theme: it was, for instance, part of the program surrounding what was known as Thom's gradient conjecture, eventually established in \cite{kurdyka2000proof}. Our objective here is rather different as we aim to clarify several geometric structures that govern the behavior of trajectories near critical points in the generic case, that is, for functions having a nondegenerate Hessian with simple spectrum at their critical points.

To this end we formalize a number of natural objects that have appeared in various contexts \cite{Gelfand1961,d2005explicit}, sometimes in other fields (for instance in chemistry, see e.g. \cite{schlegel1992following}), but whose presentation in the literature remains scattered. We argue that these objects may be unified through the notion of gradient extremals. Gradient extremals are paths along which the gradient is an eigenvector of the Hessian, and they are not, in general, gradient curves. Equivalently, they may be interpreted as loci where the slope of the function is critical relative to its level sets, under suitable nondegeneracy assumptions. This framework naturally leads to the notions of valleys and crest extremals, and calls for a clarification of their connection with the concept of a talweg. We study these notions in the generic case and show in particular that the variational definition of a talweg ---minimal slope locus on each level set--- corresponds to the gradient extremal associated with the minimal eigenvalue \Cref{la_croix_theoreme}. We also investigate the infinitesimal structure of valleys, see \Cref{prop:tangent-valley}.

Talwegs have a long history in topography and mathematics, starting with the pioneering works of de Saint-Venant \cite{Saint-Venant}, Cayley \cite{Cayley}, Maxwell \cite{Maxwell}, and Boussinesq \cite{Boussinesq} on contour and slope lines, and the definition of hills and dales. More recently, talwegs have played a fundamental role in the theory of convergence, in particular through Lojasiewicz-type inequalities \cite{lojasiewicz1963propriete}. Their definition is not uniform across the literature; here we view them as paths along which the slope is locally minimal along the corresponding level set. The works of D'Acunto \cite{d2001courbes} and Kurdyka \cite{kurdyka1998gradients} are pioneering in the use of this concept and in establishing its connection with the convergence of gradient curves, although it also appears in Lojasiewicz work \cite{lojasiewicz1982trajectoires}. In \cite{bolte2010characterizations} a transparent link between the two notions is provided as it is shown that the existence of a talweg of finite length is both necessary and sufficient for the Kurdyka-Lojasiewicz inequality to hold.

Our analysis departs from these general situations and focuses on the generic case, where gradient trajectories are known to converge generically to local minimizers \cite{thom1949partition,pemantle1990nonconvergence,lee2016gradient}. In this stability framework, gradient extremals, talwegs, and crest extremals form locally one-dimensional manifolds that intersect orthogonally at their common critical point. They provide a geometric skeleton for the gradient flow, much as eigenspaces provide natural coordinates for gradient descent quadratic functions, i.e., for the exponential function of a symmetric matrix. 
In a second part of the paper we show that talwegs act as secondary attractors to gradient dynamics: generically, directions and secants of gradient trajectories, both in continuous and discrete time, asymptotically align with the tangent directions of the talwegs, see \Cref{sec:directional_convergence_continuous_case} and \Cref{sec:discret}. Alignment occurs exponentially in the first spectral gap and the first eigenvalue. Actually the phenomenon is not only generic as it happens for all initial conditions, and at many scales with alignment to other gradient extremals with convergence rates governed by spectral gaps, see \Cref{th:align} and \Cref{th:align_discrete}. We thus, partially, recover the behavior observed in the quadratic case with a convergence structured by eigenspaces. Let us observe, however, that the nonlinear case differs from the linearized one, as nonlinearities may disrupt the dominant effect of the first spectral gap when the conditioning along the talweg is too poor, i.e., when the smallest eigenvalue at a local minimizer is too small compared to the others; see \Cref{rem:alignement-rate-cont} and \Cref{rem:alignment_speed_sharpness_discrete}. Finally, we establish volume concentration phenomena within the valley: for sets of initial conditions of positive measure, discrete and continuous flow concentrates along talwegs as time grows, see Theorems \ref{rapport_de_volumes} and \ref{rapport_de_volumes_cas_discret}. 

The tools used in this work rely on optimization techniques together with classical results from the theory of dynamical systems around hyperbolic points, in particular in the case where the real eigenvalues have the same sign, see \cite{MR141856,MR96853}. To facilitate the reading easy or routine proofs are postponed to the Appendix.

%%%%%%%%%%%%%%%%%%%%%%%%%%%%%%%%%%%%%%%%%%%%%%%%%%%%%%%
\section{Gradient extremals, talweg, and valleys}\label{preliminaries}

\subsection{Preliminaries} 

We assume $d\geq 2$, $\RR^d$ is endowed with the Euclidean scalar product
$\langle \cdot,\cdot\rangle $ and associated norm $|\cdot|$. For a nonempty subset $F\subset\RR^d$, we write $\dist(x,F)=\inf\{|x-y|:y\in F\}$.

Given $f\colon\RR^d \to \RR$, for every $r\in\RR$, we denote by $[f=r]$, $[f\leq r]$, $[f<r]$ level and sublevel sets. When some point $a$ belongs to $[f< r]$, we denote by $[f< r]_a$ the connected component of $[f< r]$ containing $a$. If $f$ is $C^2$ and the Hessian at $x^*$ is positive definite, the family $[f<r]_{x^*}$ forms a family of connected open neighborhoods of $x^*$ for $r>r^*$ with $r^*:=f(x^*)$. For this, one can, for instance, observe that 
\begin{equation}
 f(x) \geq f(x^*)+c_f |x-x^*|^2, \mbox{ with }c_f>0,
\label{quadratic_growth}
\end{equation}
on a nonempty open ball centered at $x^*$. For $r>r^*$, one defines similarly $[f\leq r]_{x^*}$ and set $[f=r]_{x^*}:=\{x\in [f\leq r]_{x^*}: f(x)=r\}$.

\medskip

Let us consider the following generic situation \cite{golubitsky2012stable}:
\begin{assumption}[Blanket assumption]\label{ass:H} $f\colon \RR^d\to \RR$ is of class $C^{3}$ and $x^*\in\RR^{d}$ is a nondegenerate critical point with simple spectrum, i.e., 
\begin{equation}\label{eq:hess}
\nabla f(x^*)=0\text{ and } \nabla^2 f(x^*) \mbox{ has pairwise distinct nonzero eigenvalues},
\end{equation}
where $\nabla f$ and $\nabla^{2}f$ denote the gradient and the Hessian of $f$, respectively. 
\end{assumption}
When \Cref{ass:H} holds, we adopt some notations and convention (on $\eta, \lambda_i,v_i$, $i=1,\ldots,d$) that we now proceed to describe. Locally the spectrum of $\nabla^2 f$ varies smoothly near $x^*$ (see Corollary~\ref{cordiff} in the Appendix): there indeed exist $\eta>0$ and $C^1$ maps $\lambda_i\colon B(x^*,\eta)\to \RR$ and $v_i\colon B(x^*,\eta)\to \RR^d$ ($i=1,\dots,d$) such that, for every $x\in B(x^*,\eta)$, $\lambda_i(x)$ is a simple eigenvalue of $\nabla^2 f(x)$ with associated eigenvector $v_i(x)$ and the family $\{v_{i}\}_{i=1}^{d}$ is orthonormal. We denote by $P(x)$ the orthogonal matrix whose $i^{th}$ column is $v_i(x)$, and by $\mathscr{D}(x)$ the diagonal matrix whose $i^{th}$ diagonal entry is $\lambda_i(x)$. After relabeling, if necessary, we may assume
\be
\label{valeurs_propres_ordonnees}
\lambda_1(x)<\lambda_2(x)<\dots<\lambda_d(x),\quad\text{for all }x \mbox{ in }B(x^*,\eta),
\ee
and thus, by the inverse function theorem, we may further assume that 
\begin{equation}
\nabla f(x)\neq 0,\quad\text{for all }x\in B(x^*,\eta)\setminus\{x^*\}.
\label{eq:non_null_gradient}
\end{equation}

We shall also consider the assumption
\begin{assumption}[Strong local minimizer]
\label{ass:min} 
For $x^*$ as in \Cref{ass:H}, we have $\lambda_{1}>0$ throughout $B(x^*,\eta)$.
\end{assumption}

Under~\Cref{ass:min}, $x^*$ is a strict local minimum satisfying~\eqref{quadratic_growth} and there exists $\bar r\in (r^*,\infty)$ such that $[f\leq \bar r]_{x^*}\subset B(x^*,\eta)$.

%%%%%%%%%%%%%%%%%%%%%%%%%%%%%%%%%%%%%%%%%%%%%%%%%%%%%%%
\subsection{Gradient extremals}

For a $C^1$ function, {\em gradient extremals} are rectifiable curves $\gamma(r)\in E(r)$ selected in the multivalued mapping:
$$r\rightrightarrows E(r):=\left\{x \in[f=r]: \nabla f(x)\neq 0 \mbox{ and }x \mbox{ is a critical point of }|\nabla f| \text{ over }[f=r] \right\}$$
where $r$ ranges over the values of $f$. But under \Cref{ass:H}, which is a local assumption, and using Lagrange multipliers conditions, it is more convenient to adopt here the following:
\begin{definition}[Gradient extremals]\label{gradient_extremal} Under \Cref{ass:H}, for $i=1, \hdots,d$, the {\em $i^{th}$ gradient extremal} is the set 
$$
\exti= \left\{ x \in B(x^*,\eta) \::\:\nabla^2f(x)\nabla f(x)= \lambda_i(x) \nabla f(x) \right\}.
$$ 
\end{definition}

\begin{theorem}[Gradient extremals are one dimensional smooth manifolds]\label{la_croix_theoreme}
Under \Cref{ass:H}, for all $i=1, \hdots, d$, the gradient extremal $\exti$ is a one-dimensional $C^1$ embedded submanifold passing through $x^*$, its tangent space at $x^*$ is given by $T_{x^*}\exti:=\RR v_i(x^*).$
\end{theorem}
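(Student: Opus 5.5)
The plan is to write $\exti$ as the zero set of a $C^1$ map $B(x^*,\eta)\to\RR^{d-1}$ whose differential at $x^*$ is onto, and then apply the implicit function theorem.

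\emph{Reformulation.} Fix $i$ and $x\in B(x^*,\eta)$. Since $\lambda_i(x)$ is a simple eigenvalue of the symmetric matrix $\nabla^2 f(x)$, we have $\ker(\nabla^2 f(x)-\lambda_i(x)I)=\RR v_i(x)$. Hence $x\in\exti$ if and only if $\nabla f(x)\in\RR v_i(x)$. Because $\{v_j(x)\}_{j=1}^d$ is orthonormal, this holds if and only if $\langle \nabla f(x),v_j(x)\rangle=0$ for every $j\neq i$. Define
\[
G_i\colon B(x^*,\eta)\to\RR^{d-1},\qquad G_i(x):=\bigl(\langle \nabla f(x),v_j(x)\rangle\bigr)_{j\neq i}.
\]
Then $\exti=G_i^{-1}(0)$. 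Since $f$ is $C^3$, $\nabla f$ is $C^2$, and the $v_j$ are $C^1$ by Corollary~\ref{cordiff}. So $G_i$ is $C^1$, and $G_i(x^*)=0$ because $\nabla f(x^*)=0$. This is exactly where the $C^3$ assumption is used: it is what makes the eigenvector fields, and hence $G_i$, $C^1$.

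\emph{Differential at $x^*$.} For $h\in\RR^d$ and $j\neq i$,
\[
DG_i(x^*)h\big|_j=\langle \nabla^2 f(x^*)h,v_j(x^*)\rangle+\langle \nabla f(x^*),Dv_j(x^*)h\rangle .
\]
The second term vanishes since $\nabla f(x^*)=0$. By symmetry of the Hessian the first term equals $\lambda_j(x^*)\langle h,v_j(x^*)\rangle$. By \Cref{ass:H}, $\lambda_j(x^*)\neq 0$ for all $j$. Therefore $DG_i(x^*)$ is surjective onto $\RR^{d-1}$, and its kernel is $\{h:\langle h,v_j(x^*)\rangle=0 \text{ for all } j\neq i\}=\RR v_i(x^*)$.

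\emph{Implicit function theorem and localization.} Write points near $x^*$ as $x^*+t v_i(x^*)+w$ with $w\perp v_i(x^*)$. The implicit function theorem then gives a neighborhood $U$ of $x^*$ in which $G_i^{-1}(0)\cap U$ is the graph $\{x^*+tv_i(x^*)+\varphi(t)\}$ of a $C^1$ map $\varphi$ with $\varphi(0)=0$ and $\varphi'(0)=0$. This graph is a one-dimensional $C^1$ embedded submanifold through $x^*$ with tangent line $\RR v_i(x^*)$.

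The one genuine point is that the statement concerns all of $\exti\subset B(x^*,\eta)$, not only a germ at $x^*$. I will handle this as the paper already does for~\eqref{valeurs_propres_ordonnees} and~\eqref{eq:non_null_gradient}: shrink $\eta$, uniformly over the finitely many indices $i=1,\dots,d$, so that $B(x^*,\eta)\subset U$ for each $i$. Then $\exti=G_i^{-1}(0)\cap B(x^*,\eta)$ is an open subset of the graph, hence still an embedded $C^1$ curve passing through $x^*$. If one prefers not to shrink, the only alternative is to check that $DG_i$ has rank $d-1$ along $\exti$ away from $x^*$. This is not automatic, since the term $\langle\nabla f,Dv_j\,\cdot\rangle$ no longer vanishes there, so shrinking the ball is the route I would take.
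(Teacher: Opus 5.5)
Your proof is correct and is essentially the paper's argument. Your map $G_i$ is exactly $\pi\circ z$, with $z(x)=P^\top(x)\nabla f(x)$ and $\pi$ deleting the $i$-th coordinate, and your computation of $DG_i(x^*)$ is the paper's observation that $Dz(x^*)=P^\top(x^*)\nabla^2 f(x^*)$ is invertible; the paper then invokes the submersion (regular value) theorem where you use the implicit function theorem, and your explicit shrinking of $\eta$ makes precise what the paper leaves implicit when it calls $z$ a local diffeomorphism on $B(x^*,\eta)$.
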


\begin{proof*}
 Defining $B(x^*,\eta)\ni x\mapsto z(x)=P^{\top}(x) \nabla f(x)\in\RR^d$, Definition~\ref{gradient_extremal} implies that, for every $i=1, \hdots, d$ and $x\in B(x^*,\eta)$, 
\begin{equation}
x\in \exti\quad \Leftrightarrow \quad \mathscr{D}(x) z(x)= \lambda_i(x) z(x) \quad \Leftrightarrow \quad  z_j(x)=0 \mbox{ for all }j\neq i.
\label{eq:caracterisation_talweg}
\end{equation}

Notice that $z$ is of class $C^{1}$ with 
\begin{equation}
Dz(x^*)=P^\top(x^*)\nabla^2 f(x^*)+DP^\top(x^*)\nabla f(x^*)=P^\top(x^*)\nabla^2 f(x^*)
\label{eq:dz}
\end{equation}
invertible. Therefore, $z$ is a local diffeomorphism. Let us show the result for $i=1$: let $\pi$ be the canonical projection on the last $d-1$ coordinates. Define $S\colon B(x^*,\eta)\ni x\mapsto\pi(z(x))\in \RR^{d-1}$. By~\eqref{eq:caracterisation_talweg}, we have that $\extun=S^{-1}(\{0\})\ni x^*$. Moreover, $S$ is a submersion. Indeed, $DS(x)=\pi(Dz(x))$ and $z$ is a local diffeomorphism. As a conclusion, $\extun$ is a one-dimensional $C^1$ embedded submanifold with $T_{x^*}\extun=\ker DS(x^*)$. Since, by~\eqref{eq:dz},
\begin{gather*}
DS(x^*)=\pi\left(P^\top(x^*)\nabla^2 f(x^*)\right)=\pi\left(\mathscr{D}(x^*)P^\top(x^*)\right),
\end{gather*}
the result follows from $\ker DS(x^*)=\text{span}\{v_{2}(x^*),\hdots,v_{d}(x^*)\}^{\perp}=\RR v_{1}(x^*)$.
\medskip
\end{proof*}

%%%%%%%%%%%%%%%%%%%%%%%%%%%%%%%%%%%%%%%%%%%%%%%%%%%%%%%
\subsection{Talweg and crest extremal} 
 
Let us recall the notion of a {\it talweg} around $x^*$,\footnote{We restrict to the case when $x^*$ is a local minimizer.} see, e.g.,  \cite{d2001courbes,d2005explicit,bolte2010characterizations} and references therein.

\begin{definition}[Talweg: ``path in the valley"\footnote{English translation of the German word borrowed from topography: Talweg or Thalweg.}]\label{definition_talweg}  
Under Assumptions \ref{ass:H} and \ref{ass:min}, the  {\it talweg} of $f$ around $x^*$ is the set-valued map $\tal: [r^*,\bar r)\rightrightarrows [f<\bar r]_{x^*}$ defined through: 
\be
\theta(r)\in \tal(r)\Longleftrightarrow \theta(r) \; \mbox{is a local minimizer of  }|\nabla f|\mbox{ over } [f=r]_{x^*}.
\label{talweg_problem}
\ee
\end{definition}
Since $[f=r]_{x^*}$ is compact, we have $\tal(r)\neq \emptyset$. 

\begin{theorem}[The talweg is the gradient extremal of minimal eigenvalue] Under Assumptions \ref{ass:H} and \ref{ass:min}, 
\be
\label{egalite_talweg_extremal_1} 
\tal\big([r^*,\bar r)\big)= \extun\cap [f<\bar r]_{x^*},
\ee
for $\bar r$ close enough to $r^*.$ 
In particular, $\tal\big([r^*,\bar r)\big)$ is a one-dimensional $C^1$ embedded submanifold containing $x^*$ and
\be
\label{espace_tangent_intersection}
T_{x^*} \left[\tal\big([r^*,\bar r)\big)\right]= \RR v_1(x^*).
\ee
\label{th:talweg_is_gradient_extremal}
\end{theorem}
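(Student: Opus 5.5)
We have to prove two inclusions, after shrinking $\bar r$. The tangent statement \eqref{espace_tangent_intersection} then follows at once from \Cref{la_croix_theoreme}, since $\extun\cap[f<\bar r]_{x^*}$ is an open piece of $\extun$ containing $x^*$. At $r=r^*$ we have $[f=r^*]_{x^*}=\{x^*\}$, because $x^*$ is a strict local minimizer. Hence $\tal(r^*)=\{x^*\}$, and $x^*\in\extun$ trivially, so from now on we take $r\in(r^*,\bar r)$.

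\emph{Inclusion $\subset$.} Let $\theta\in\tal(r)$. Then $\theta$ is a local minimizer of $g:=\tfrac12|\nabla f|^2$ on $[f=r]$. Since $\nabla f(\theta)\neq 0$ by \eqref{eq:non_null_gradient}, this level set is a $C^2$ hypersurface near $\theta$. We apply the Lagrange multiplier rule, using $\nabla g=\nabla^2 f\,\nabla f$, and get $\nabla^2 f(\theta)\nabla f(\theta)=\mu\nabla f(\theta)$. Thus $\nabla f(\theta)$ is an eigenvector, so $\theta\in\exti$ for some $i$ and $\mu=\lambda_i(\theta)$. It remains to exclude $i\geq 2$, and we use the second-order necessary condition for this. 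The Hessian of the Lagrangian $g-\mu f$ is
\[
\nabla^2 f(\theta)^2+D^3f(\theta)[\nabla f(\theta)]-\lambda_i(\theta)\nabla^2 f(\theta),
\]
and it must be positive semidefinite on $\nabla f(\theta)^\perp$. We test it on $v_1(\theta)$, which is orthogonal to $\nabla f(\theta)\in\RR v_i(\theta)$ when $i\neq1$. This gives
\[
\lambda_1(\lambda_1-\lambda_i)+O(|\nabla f(\theta)|)\geq 0.
\]
The first term is $\leq -\lambda_1(x^*)(\lambda_2(x^*)-\lambda_1(x^*))/2<0$ uniformly near $x^*$, and $|\nabla f(\theta)|\to0$ as $\bar r\downarrow r^*$. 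So for $\bar r$ close to $r^*$ this is a contradiction, and $\theta\in\extun$.

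\emph{Inclusion $\supset$.} This is the delicate part. Each level set $[f=r]_{x^*}$, for $r>r^*$ small, is a $C^2$ sphere. First we show that $\extun$ meets it in exactly the two points $\gamma(\pm)$. Parametrize $\extun$ near $x^*$ by $\gamma(t)=x^*+tv_1(x^*)+o(t)$. Then $f(\gamma(t))=r^*+\tfrac12\lambda_1 t^2+o(t^2)$, and $\tfrac{d}{dt}f(\gamma(t))=\langle\nabla f(\gamma),\gamma'\rangle$ has the sign of $t$, because $\nabla f(\gamma(t))\approx t\lambda_1 v_1$. So $f\circ\gamma$ is strictly monotone on each half-branch. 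By \Cref{la_croix_theoreme}, $\extun$ is locally the zero set of $S$, so it coincides with the image of $\gamma$ in a small ball, which contains $[f\le\bar r]_{x^*}$ for $\bar r$ small. Second, $\tal(r)\neq\emptyset$ by compactness, and $\tal(r)\subset\extun$. This already gives one point of $\extun$ in $\tal(r)$, but we need both. I would use the symmetry-free second-order \emph{sufficient} condition at each point of $\extun$. At $\theta\in\extun$ with $\theta\neq x^*$, the Lagrangian Hessian restricted to $\nabla f(\theta)^\perp=\mathrm{span}\{v_2,\dots,v_d\}$ equals
\[
\mathrm{diag}\bigl(\lambda_j(\lambda_j-\lambda_1)\bigr)_{j\ge2}+O(|\nabla f(\theta)|).
\]
This is positive definite for $\bar r$ small, so $\theta$ is a strict local minimizer of $|\nabla f|$ on $[f=r]$, that is, $\theta\in\tal(r)$.

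The main obstacle is the uniformity of the $O(|\nabla f|)$ terms and of the positive definiteness estimates as $\bar r$ shrinks. This is handled by the $C^3$ regularity of $f$ (so $D^3f$ is bounded on $\overline{B}(x^*,\eta)$), by continuity of $\lambda_j$, and by $|\nabla f|\to0$ uniformly on $[f\le r]_{x^*}$ as $r\downarrow r^*$, which follows from \eqref{quadratic_growth}. With these bounds, both second-order arguments hold simultaneously for all $r\in(r^*,\bar r)$.
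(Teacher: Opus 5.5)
Your proof is correct and follows essentially the same route as the paper. For $\subset$ you use the Lagrange condition plus the second-order necessary condition tested on $v_1(\theta)$; for $\supset$ you use the second-order sufficient condition on $\nabla f(\theta)^\perp=\mathrm{span}\{v_2(\theta),\dots,v_d(\theta)\}$; and the tangent space comes from \Cref{la_croix_theoreme}. Your preliminary count showing that $\extun$ meets each level set in exactly two points is not needed, since the sufficient-condition argument already covers every point of $\extun\cap[f<\bar r]_{x^*}$; the paper proves that fact separately in \Cref{th:study_of_theta}.
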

\begin{proof*}  Let $x\in \tal\big([r^*,\bar r)\big)$. If $f(x)=r^*$, then $x=x^*$, whence $x\in \extun\cap [f<r_{\eta}]$. Thus, let us assume that $r:=f(x)\in (r^*,\bar r)$. The Lagrangian $L_r: [f<\bar r]_{x^*}\times \RR \to \RR$ associated to the optimization problem in  \eqref{talweg_problem}  is given by
$$
L_r(y,\lambda)= \half |\nabla f(y)|^2 +\lambda(r-f(y))$$ 
for all $y\in [f<\bar r]_{x^*},\;\lambda\in\RR$. Since $\nabla f(x)\neq 0$, there exists a unique $\bar{\lambda}(x) \in \RR$ such that $\nabla_x L_r(x,\bar{\lambda}(x)) =0$, or, equivalently,
\be\label{lagrange_multiplier_talweg_definition_}
 \nabla^2f(x) \nabla f(x) = \bar{\lambda}(x)  \nabla f(x).
\ee
Moreover, the following second-order necessary condition holds (see, e.g., \cite[Chapter 2, Theorem 2]{MR1058438}):
\be
\langle \nabla^2_{xx} L_r(x, \bar{\lambda}(x))h, h \rangle \geq 0, \hspace{0.3cm} \text{for all }  h \in \RR^d \; \text{such that }  \langle \nabla f(x),  h\rangle =0, 
\label{second_order_condition_necessary_talweg}
\ee
which can be rewritten as 
\be
\left\langle \left( D^3f(x) \nabla f(x) + \nabla^2f(x)^2-\bar{\lambda}(x) \nabla^2f(x) \right)h, h \right\rangle \geq 0,
\label{second_order_necessary_condition_third_derivative_f}
\ee
for all $h \in \RR^d$ such that  $\langle \nabla f(x), h\rangle =0$. If $\bar{\lambda}(x)\neq\lambda_{1}(x)$, we can take $h=v_{1}(x)$ in~\eqref{second_order_necessary_condition_third_derivative_f}. Indeed $v_1$ has to be orthogonal to $\nabla f(x)$ which is also an eigenvector. This  yields  
\begin{equation}
\left\langle   D^3f(x) \nabla f(x)v_{1}(x),v_{1}(x) \right\rangle + \lambda_{1}(x)\left( \lambda_{1}(x)-\bar{\lambda}(x)\right)\geq 0.
\label{eq:inequality_tressian}
\end{equation}
Since the first term tends to zero when $x$ approaches $x^*$ and $\bar \lambda(x)\in \{\lambda_{2}(x),\ldots, \lambda_d(x)\}$, the continuity of the spectrum of $\nabla^{2}f$ at $x^*$ shows that~\eqref{eq:inequality_tressian} can not hold for $|x-x^*|$ small enough. Thus, reducing $\bar r-r^*$, if necessary, we deduce that $\bar{\lambda}(x)=\lambda_{1}(x)$ and, hence, $x\in \extun\cap [f<\bar r]_{x^*}$.

Conversely, let $x \in \extun\cap [f<\bar r]_{x^*}$ and $h\in \RR^d$ be such that $\langle \nabla f(x), h \rangle=0$. Since $\nabla f(x)$ is in $\RR v_1(x)$,   there exist $\mu_2, \hdots, \mu_d\in \RR$ such that $h=\sum_{i=2}^{d} \mu_i v_{i}(x)$. Thus, 
\be\label{prop_3_1_estimates}
\begin{split}
&\;\left\langle \left(\nabla^2f(x)^2-\lambda_{1}(x)\nabla^2f(x) \right)h, h \right\rangle \\
=&\;\ds \sum_{i=2}^{d}\left(\lambda_i^2(x)  -\lambda_{1}(x)\lambda_i(x)\right) \mu_i^2\\ 
\geq &\;\ds \lambda_2(x) \left(\lambda_2(x) -\lambda_{1}(x)\right) \sum_{i=2}^{d}\mu_i^2 \\
= &\; \lambda_2(x) \left(\lambda_2(x) -\lambda_{1}(x)\right)|h|^2.
\end{split}
\ee
On the other hand, since $D^3f$, $\nabla f$, and $\lambda_2-\lambda_1$ are continuous and $\nabla f(x^*)=0$, reducing $\bar r-r^*$, if necessary, there exists $c>0$ such that 
$$
\lambda_2(x) \left(\lambda_2(x) -\lambda_{1}(x)\right)   - \left\| D^3f(x) \right\| |\nabla f(x)| >c.$$
Then we deduce from \eqref{prop_3_1_estimates} that 
\be
\left\langle \left( D^3f(x) \nabla f(x) + \left(\nabla^2f(x)\right)^2-\lambda_{1}(x)\nabla^2f(x) \right)h, h \right\rangle \geq    c|h|^2.
\label{second_order_condition_sufficient_talweg_proof}
\ee
Thus, setting $r=f(x)$, we get
\begin{equation}
\nabla_{x}L_{r}(x, \lambda_{1}(x))=0 \; \; \mbox{and } \;  \langle \nabla^2_{xx}L_{r}(x, \lambda_{1}(x))h,h\rangle >0,
\label{second_order_condition_sufficient_talweg_proof_lag}
\end{equation}
for all $h\in \RR^d\setminus \{0\}$ such that $\langle \nabla f(x),h \rangle=0$, 
which  by \cite[Theorem 4]{MR1058438} implies that $x\in \tal\big([r^*,\bar r)\big)$. Altogether, we have established~\eqref{egalite_talweg_extremal_1}. Finally, since $[f<\bar r]_{x^*}$ is an open subset of $\RR^d$, we deduce \eqref{espace_tangent_intersection} from~\eqref{egalite_talweg_extremal_1} and Theorem~\ref{la_croix_theoreme}.
\medskip
\end{proof*}

\begin{remark}
(a) (Talweg structure) As  shown in \Cref{th:study_of_theta} (see Appendix), if $r>r ^*$ is close enough to $r^*$, $\tal(r)$ has exactly two points, while $\tal(r^*)=\{x^*\}$.\\
(b) (Crest extremal)   One could also define a notion of a  {\em crest extremal} $\mathrm{Cre}\colon[r^*,\bar r)\rightrightarrows \RR^d$  through  
\be
\label{crest extremal_problem}
\gamma(r)\in \mathrm{Cre}(r) \Longleftrightarrow \gamma(r) \; \mbox{is a local maximizer of  }|\nabla f|\mbox{ over } [f=r]_{x^*}.  \ee
 Using similar arguments one can prove that the crest extremal is associated to the gradient extremal of maximal eigenvalue $\extr_d$.  
\end{remark}
\begin{figure}[ht]
    \centering
    \begin{minipage}{0.47\textwidth}
        \centering
        \includegraphics[width=\linewidth]{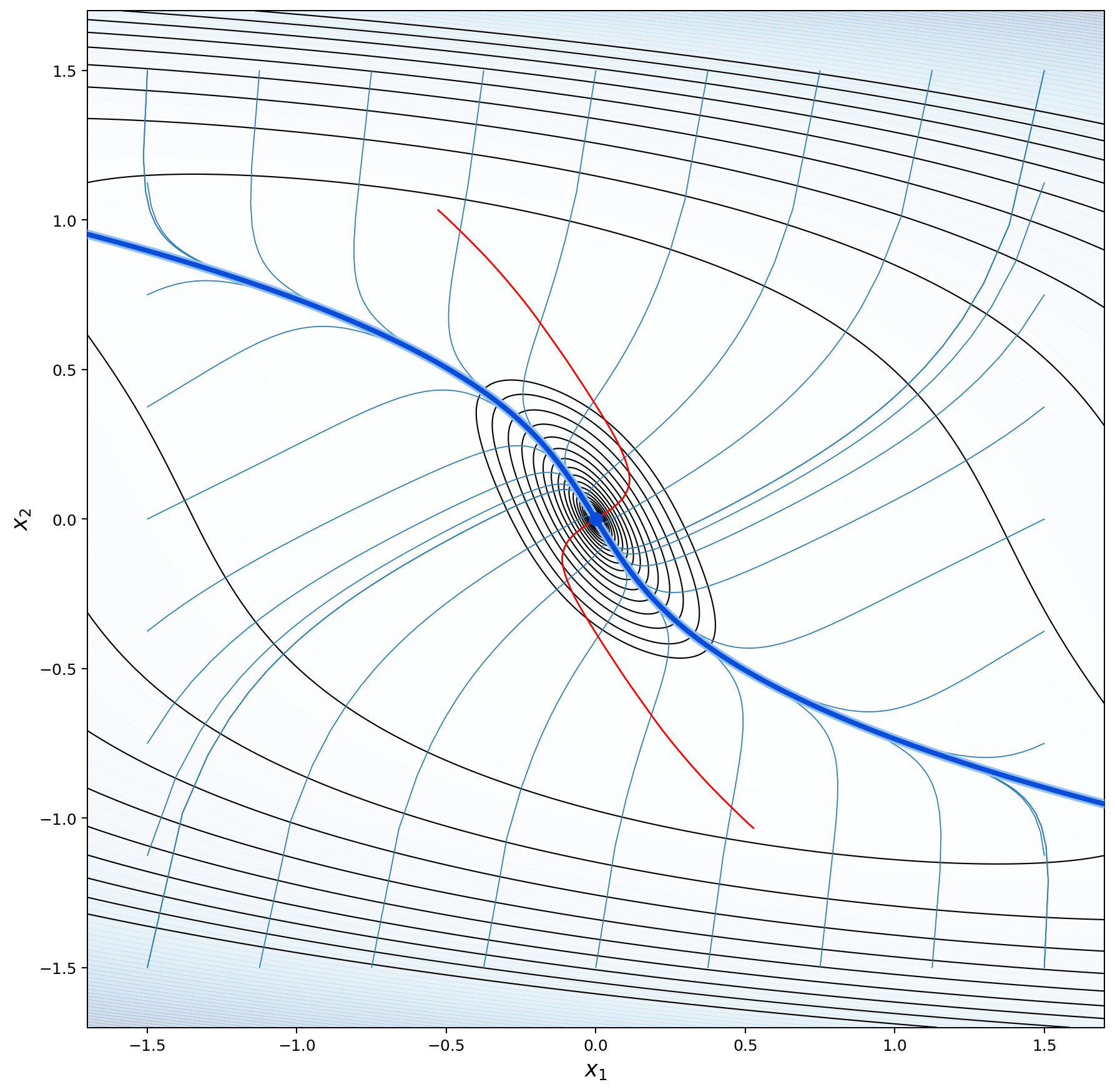}
        \caption{Black level lines, blue talweg, and red crest extremal for $f(x_1,x_2)=x_1^2+\left(x_1+x_2+x_2^3\right)^2$. Gradient curves, in light blue, ultimately align with the talweg, see \Cref{HG_cas_continu} and \Cref{th:align}.}
    \end{minipage}
    \hfill
    \begin{minipage}{0.47\textwidth}
        \centering
        \includegraphics[width=\linewidth]{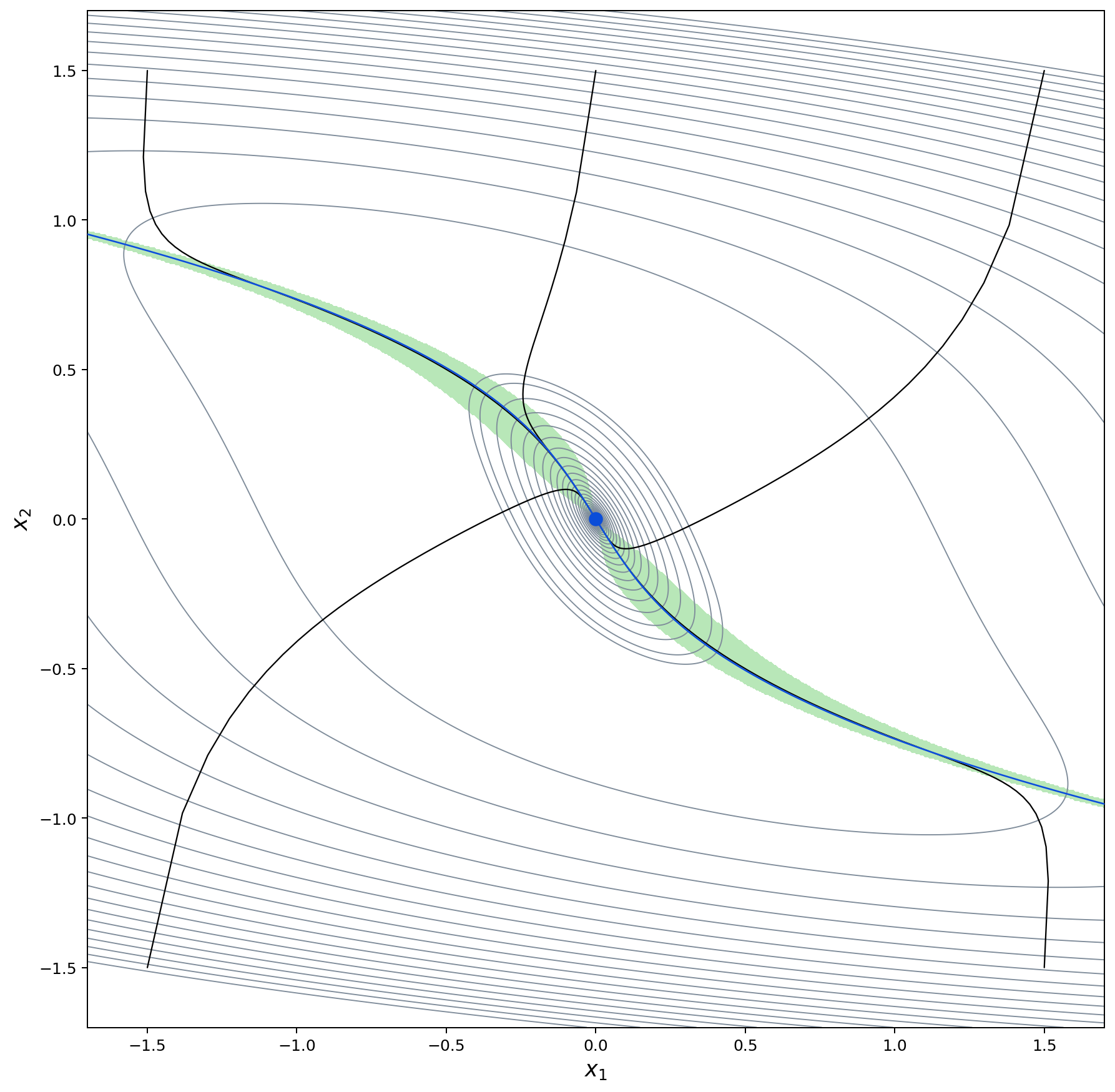}
        \caption{The valley has a simple infinitesimal structure near a local minimizer as it has the structure of a second-order cone, see \Cref{valley_inclusions}. Farther away, it may become more complex.}
    \end{minipage}
\end{figure}

%%%%%%%%%%%%%%%%%%%%%%%%%%%%%%%%%%%%%%%%%%%%%%%%%%%%%%%
\subsection{Valleys and their variational structure}
\label{section_valleys}

Under Assumptions \ref{ass:H} and \ref{ass:min}, and given $w\geq 0$,  define the {\it valley of width $w$} around $x^*$ by
\begin{equation}\label{def:val}
\vall_{f,w}(x^*)=\left\{ x\in [f<\bar r]_{x^*} \; : \; |\nabla^2f(x) \nabla f(x) - \lambda_{1}(x)  \nabla f(x) |  \leq  w | \nabla f(x)| \right\}.
\end{equation}
The ``width measure" $w$ evaluates the relative error of being an eigenvector for the Hessian and quantifies a form of enlargement of the talweg. Other measures could be proposed as in \cite{bolte2010characterizations}, but as we will see, this one has interesting properties.

By Theorem~\ref{th:talweg_is_gradient_extremal}, we have $\tal\big([r^*,\bar r)\big)\subseteq \vall_{f,w}(x^*)$ for all $w\geq 0$, with equality when $w=0$. Note also that when the width is such that $w\geq \lambda_d(x)-\lambda_1(x)$ for all $x$ in $[f<\bar r]_{x^*}$, a simple computation shows that the valley is so wide that it coincides with the whole neighborhood $[f<\bar r]_{x^*}$.

Consider the Taylor expansion at order two of $f-f(x^*)$ at $x^*$, i.e. $q(y)= f(x^*)+\half \langle \nabla^2f(x^*)y,y\rangle$, with $y$ in $\RR^d$, 
and introduce its valley (at $0$)
$$
\ba{rcl}
\vall_{q,w}   &=&  \left\{ x\in \RR^d \; : \; | \big(\nabla^2f(x^*)^2   - \lambda_{1}(x^*) \nabla^2f(x^*)\big)x |  \leq  w |\nabla^2f(x^*) x|  \right\}.
\ea
$$

One easily sees that $\vall_{q,w}$ is a non-empty  closed  cone whose generator is the eigenspace $\RR v_1(x^*)$. For every $0<w_1<w_2$, one has $\vall_{q,w_1}\subset\vall_{q,w_2}$.

\begin{proposition}[Valley and  conic approximation]
\label{valley_inclusions} 
Under Assumptions~\ref{ass:H}, \ref{ass:min}, and given $w>w'>0$, the following  hold:\smallskip\\
{\rm(i)}  Diminishing $\bar r>r^*$ if necessary, we have  $\big(x^*+\vall_{q,w'}\big)\cap [f<\bar r]_{x^*} \subset \vall_{f,w}(x^*)$.   \\[4pt]
{\rm(ii)} Diminishing $\bar r>r^*$ if necessary, we have  $\vall_{f,w'}\subset \big(x^*+\vall_{q,w}\big) \cap [f<\bar r]_{x^*}$.
\end{proposition}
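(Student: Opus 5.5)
Both inclusions follow from a first-order comparison between the nonlinear valley condition at $x$ and its quadratic counterpart at $y:=x-x^*$. The point is that both sides of the defining inequality of $\vall_{f,w}(x^*)$ are homogeneous of degree one in $y$ to leading order.

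Set $H:=\nabla^2 f(x^*)$ and $\lambda_1^*:=\lambda_1(x^*)$. Since $f$ is $C^3$ and $\nabla f(x^*)=0$, Taylor's formula gives $\nabla f(x)=Hy+O(|y|^2)$. Since $\nabla^2 f$ and $\lambda_1$ are $C^1$ near $x^*$ (Corollary~\ref{cordiff}), we also have $\nabla^2 f(x)=H+O(|y|)$ and $\lambda_1(x)=\lambda_1^*+O(|y|)$. Hence
\[
\nabla^2 f(x)\nabla f(x)-\lambda_1(x)\nabla f(x)=(H^2-\lambda_1^* H)y+R(x),\qquad |R(x)|\le C|y|^2,
\]
and $\nabla f(x)=Hy+R'(x)$ with $|R'(x)|\le C|y|^2$. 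Under Assumption~\ref{ass:min}, $H$ is positive definite, so $|Hy|\ge \lambda_1^*|y|$. Consequently both errors are $o(|Hy|)$: precisely, $|R|+w|R'|\le \varepsilon(|y|)\,|Hy|$ with $\varepsilon(s)\to0$ as $s\to0$. By the quadratic growth \eqref{quadratic_growth}, $[f<\bar r]_{x^*}\subset B(x^*,\delta(\bar r))$ with $\delta(\bar r)\to0$ as $\bar r\downarrow r^*$. So shrinking $\bar r$ makes $\varepsilon$ as small as we like uniformly on the region considered.

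For (i), take $x\in [f<\bar r]_{x^*}$ with $y\in\vall_{q,w'}$, that is, $|(H^2-\lambda_1^*H)y|\le w'|Hy|$. Then
\[
|\nabla^2 f(x)\nabla f(x)-\lambda_1(x)\nabla f(x)|\le w'|Hy|+C|y|^2,
\]
while $w|\nabla f(x)|\ge w|Hy|-wC|y|^2$. It therefore suffices that $(w-w')|Hy|\ge C(1+w)|y|^2$. This holds once $|y|\le \lambda_1^*(w-w')/(C(1+w))$, which we ensure by shrinking $\bar r$. The case $y=0$ is trivial.

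For (ii), the argument is symmetric. Take $x\in\vall_{f,w'}(x^*)$. Then
\[
|(H^2-\lambda_1^*H)y|\le w'|\nabla f(x)|+C|y|^2\le w'|Hy|+C(1+w')|y|^2\le w|Hy|
\]
for $|y|$ small, again using $|Hy|\ge\lambda_1^*|y|$. Thus $y\in\vall_{q,w}$, and $x$ lies in $[f<\bar r]_{x^*}$ by definition of the valley.

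The only delicate point is uniformity. We need a single constant $C$ for the remainders on a fixed ball $B(x^*,\eta')$, which follows from continuity of $D^3f$ and of $D\lambda_1$ on a compact ball. We also need the strict gap $w>w'$ to absorb the quadratic error against the linear lower bound $\lambda_1^*|y|$. This is exactly why the statement loses a little width in each direction and cannot hold with $w=w'$.
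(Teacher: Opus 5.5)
Your proof is correct and follows essentially the paper's argument: a first-order Taylor comparison of the nonlinear valley condition with its quadratic counterpart, where the $O(|x-x^*|^2)$ errors are absorbed into the gap $w-w'$ using a lower bound linear in $|x-x^*|$, and $\bar r$ is shrunk so that $[f<\bar r]_{x^*}$ lies in a small ball. The only differences are cosmetic: you normalize by $|\nabla^2 f(x^*)(x-x^*)|\ge\lambda_1(x^*)|x-x^*|$, whereas the paper uses $|x-x^*|\le c_1|\nabla f(x)|$ together with a continuity modulus $m$; and you write out (ii) explicitly, which the paper only describes as analogous.
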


\begin{proof*}  The proof of  {\rm(ii)} being analogous to the proof of {\rm(i)}, we only show the first assertion. Since  $f$ is $C^3$,  $\nabla f(x^*)=0$, and $\nabla^2f(x^*)$ are invertible, by Taylor expansions we have the existence of $c_1$, $c_2$ in $(0,\infty)$ such that, if $\delta>0$ is small enough and $x$ in $B(x^*,\delta)$, $|x-x^*|\leq c_1|\nabla f(x)|$ and $|\nabla f(x) - \nabla^2f(x^*)(x-x^*)|\leq c_2|x-x^*|^{2}$. Thus, setting $c=c_1c_2>0$, we have
$$
|\nabla f(x) - \nabla^2f(x^*)(x-x^*)| \leq c |x-x^*|  |\nabla f(x)|,\quad\text{for all } x\in B(x^*,\delta).
$$
The continuity of $\nabla^2 f$ and  $\lambda_1$ implies then the existence of an increasing and continuous function $m\colon\RR_+\to \RR_+$, with $m(0)=0$, such that for any $x\in B(x^*,\delta),$
\begin{equation}
\begin{split}
& \;\left|\left|\nabla^2f(x) \nabla f(x) - \lambda_{1}(x)  \nabla f(x) \right|- \left| \big(\nabla^2f(x^*)   - \lambda_{1}(x^*)I_{d}\big)\nabla^2f(x^*)(x-x^*) \right|\right|\\
\leq & \; m(|x-x^*|)|\nabla f(x)|.
\label{eq:consequence_taylor}
\end{split}
\end{equation}

Let $0<w'<w$ and $0<\delta<\eta$ be given and let $x\in x^*+\vall_{q,w'}\cap B(0,\delta)$, i.e., $|x-x^*|<\delta$ and
\begin{equation}
|\big(\nabla^2f(x^*)-\lambda_{1}(x^*)I_{d}\big)\nabla^2f(x^*)(x-x^*)|\leq w'|\nabla^2f(x^*)(x-x^*)|.
\label{eq:valley_application}
\end{equation}
By reducing  $\delta$, if necessary, we can assume that $m(\delta)\leq w-w'$. Thus,~\eqref{eq:consequence_taylor} yields 
$$
|\nabla^2f(x) \nabla f(x) - \lambda_{1}(x)  \nabla f(x) |  \leq  (w'+m(|x-x^*|))| \nabla f(x)|\leq w|\nabla f(x)|,
$$
i.e., $x\in \vall_{f,w}$. Diminishing $\bar r$ so that $[f<\bar r]_{x^*}\subset B(x^*,\delta)$ gives (i).
\end{proof*}

\medskip

Recall that given a non-empty closed set $A \subseteq \RR^d$, the {\em(Bouligand)  tangent cone} $\T_{A}(x)$ to $A$ at $x\in A$ is defined by
\begin{multline*}
\T_{A}(x)=   \big\{ h \in \RR^d \; : \; \exists \, (h_n)_{n \in \NN} \subset \RR^d, \; (\tau_n)_{n \in \NN} \subset (0,\infty), \\\tau_n \to 0, \; h_n \to h, \, \; \mbox{and } x+\tau_n h_n \in A,\hspace{0.2cm} \text{for} \; n \in \NN\big\}. 
\end{multline*}
Notice that, for every $w>0$, $\T_{\vall_{q}^{w}}(0)=\vall_{q,w}$.

\begin{proposition}[Tangent cone to the valley] \label{prop:tangent-valley} Under Assumptions~\ref{ass:H}, \ref{ass:min}, and given $w>0$, one has  $$\T_{\vall_{f,w}(x^*)}(x^*)=\vall_{q,w}.$$ 
\end{proposition}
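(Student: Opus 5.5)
Set $A=\nabla^2 f(x^*)$ and $\lambda_1^*=\lambda_1(x^*)$, and recall that $\vall_{q,w}=\{y: |(A-\lambda_1^*I_d)Ay|\le w|Ay|\}$ is a closed cone. I will prove the two inclusions separately. For both, the basic tool is the uniform estimate \eqref{eq:consequence_taylor} from the proof of Proposition~\ref{valley_inclusions}:
\[
\Big|\,|\nabla^2 f(x)\nabla f(x)-\lambda_1(x)\nabla f(x)| - |(A-\lambda_1^* I_d)A(x-x^*)|\,\Big|\le m(|x-x^*|)\,|\nabla f(x)|,
\]
valid for $x$ near $x^*$. I will also use $|\nabla f(x)-A(x-x^*)|=O(|x-x^*|^2)$.

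\emph{Inclusion $\T_{\vall_{f,w}(x^*)}(x^*)\subset\vall_{q,w}$.} Let $h$ be tangent, with $h_n\to h$, $\tau_n\downarrow0$ and $x_n=x^*+\tau_n h_n\in\vall_{f,w}(x^*)$. We may assume $h\ne0$, because $0\in\vall_{q,w}$. Combining the valley inequality at $x_n$ with the estimate above gives
\[
|(A-\lambda_1^*I_d)A\tau_nh_n|\le (w+m(\tau_n|h_n|))\,|\nabla f(x_n)|.
\]
Divide by $\tau_n$. Since $\nabla f(x_n)/\tau_n=Ah_n+O(\tau_n)\to Ah$, letting $n\to\infty$ yields $|(A-\lambda_1^*I_d)Ah|\le w|Ah|$, that is, $h\in\vall_{q,w}$. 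This direction is routine.

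\emph{Inclusion $\vall_{q,w}\subset\T_{\vall_{f,w}(x^*)}(x^*)$.} This is the delicate direction, because a direction lying on the boundary of the cone may fail to belong to $\vall_{f,w}$ along the ray $x^*+\tau h$. Take $h\in\vall_{q,w}$ with $h\neq0$. Since $\vall_{q,w'}\subset \vall_{q,w}$ for $w'<w$, the easy case is $h\in\vall_{q,w'}$ for some $w'<w$. Then Proposition~\ref{valley_inclusions}(i) gives $x^*+\tau h\in\vall_{f,w}(x^*)$ for all small $\tau>0$, because $\vall_{q,w'}$ is a cone and $x^*+\tau h$ lies in $[f<\bar r]_{x^*}$ for $\tau$ small. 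Hence $h$ is tangent with $h_n=h$. For a general $h$, it is enough to show that
\[
\vall_{q,w}=\overline{\textstyle\bigcup_{w'<w}\vall_{q,w'}},
\]
because the Bouligand tangent cone is closed.

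To prove this density, write $h=\sum_i\mu_iv_i(x^*)$ and $g(h)=|(A-\lambda_1^*I_d)Ah|^2-w^2|Ah|^2=\sum_i\lambda_i^2\big((\lambda_i-\lambda_1)^2-w^2\big)\mu_i^2$, with $\lambda_i=\lambda_i(x^*)$. The condition $h\in\vall_{q,w}$ reads $g(h)\le0$. If $g(h)<0$, then by continuity $h\in\vall_{q,w'}$ for some $w'<w$. If $g(h)=0$, perturb $h$ to $h+\epsilon v_1(x^*)$. This changes only the $\mu_1$ term, whose coefficient is $\lambda_1^2(0-w^2)<0$. Taking $\epsilon$ with the sign of $\mu_1$ (or either sign if $\mu_1=0$) increases $\mu_1^2$ strictly, so $g(h+\epsilon v_1(x^*))<0$. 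These perturbed vectors tend to $h$ as $\epsilon\to0$, which gives the density and completes the proof.

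The main obstacle is exactly this boundary issue. The diagonal form of $g$ with a strictly negative $v_1$-coefficient, which uses $\lambda_1\ne0$ and $w>0$, is what resolves it.
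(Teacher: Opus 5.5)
Your proof is correct and follows essentially the paper's route. For $\subset$, you pass to the limit directly in the comparison estimate \eqref{eq:consequence_taylor} instead of invoking Proposition~\ref{valley_inclusions}(ii) with $w'>w$ and intersecting over $w'$; for $\supset$, you combine Proposition~\ref{valley_inclusions}(i) on the cones $\vall_{q,w'}$, $w'<w$, with an approximation argument, as the paper does. Your perturbation along $v_1(x^*)$, which uses the strictly negative coefficient $-\lambda_1(x^*)^2w^2$, also justifies the existence of approximants $h_n\in\vall_{q,w_n}$ with $w_n\uparrow w$ and $h_n\to h$, a step the paper simply asserts.
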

\begin{proof*} Fix $w'>w$ and let $h$ in $\T_{\vall_{f,w}(x^*)}(x^*)$,  $(h_{n})_{n\in \NN}\subset \RR^d$ and $(\tau_{n})_{n\in \NN}$ a positive sequence such that $x^*+\tau_n h_n \in \vall_{f,w}(x^*)$, $h_n \to h$ and $\tau_n \to 0$. Since $\tau_n h_n \to 0$, Proposition \ref{valley_inclusions}{\rm(ii)} implies the existence of  $\delta\in (0,\eta)$ such that, for $n \in \NN$ large enough, $\tau_n h_n \in \vall_{q,w'}$ and, hence, $h_n \in  \vall_{q,w'}$. Since $\vall_{q,w'}$ is closed, we get that $h\in \vall_{q,w'}$ and, since $w'>w$ is arbitrary, we obtain that $h \in \vall_{q,w}$. Now, let $h\in \vall_{q,w}$ and take two sequences $(w_n)_{n\in \NN}\subset [0,w)$ and $(h_n)_{n \in \NN}\subset \RR^d$ such that, as $n\to \infty$,  $w_n \uparrow w$, $h_{n} \to h$ and  $h_n \in \vall_{q,w_n},$ for all $n\in \NN$.   Proposition \ref{valley_inclusions}{\rm(i)} then implies the existence of $(\delta_{n})_{n\in \NN} \subset (0, \eta)$ such that $x^*+\vall_{q,w_n} \cap B(0, \delta_n) \subset \vall_{f,w}(x^*)$. Let $(t_n)_{n\in \NN}\subset (0,\infty)$ be such that $t_n \downarrow 0$ and $t_n h_{n} \in \vall_{q,w_n} \cap B(0, \delta_n),$ for all $n\in \NN$, then $x^*+t_n h_{n} \in \vall_{f,w}(x^*)$ and hence $h\in \T_{\vall_{f,w}(x^*)}(x^*)$.
\end{proof*}

%%%%%%%%%%%%%%%%%%%%%%%%%%%%%%%%%%%%%%%%%%%%%%%%%%%%%%%
\section{Directional convergence of gradient trajectories}
\label{sec:directional_convergence_continuous_case}

\subsection{Preliminaries}
\paragraph{Radial projection} We  make repeated use of {\em the radial projection}
\[
\radproj:\RR^d\setminus\{0\}\longrightarrow \Sph,\qquad
\radproj(x)=\frac{x}{|x|}.
\]
where $S(0,1)$ is the unit sphere. For later use, let us record the following elementary properties of $\rho$. First, we have $\rho(\ell x) = \rho(x)$ for all $\ell > 0$ and $x \neq 0$. Moreover:
\begin{equation}
|\rho(a+b)-\rho(a)|\leq 2\frac{|b|}{|a|},\quad\text{for all }a,\, b\in\RR^{d},\, a\neq 0,\, a+b\neq 0.
\label{eq:inegalite_basique_normalisee}
\end{equation}
 Observe indeed $\displaystyle 
\frac{a+b}{|a+b|}-\frac{a}{|a|}
= \frac{a+b}{|a+b|}-\frac{a+b}{|a|}
+\frac{b}{|a|}
= (a+b)\!\left(\frac{1}{|a+b|}-\frac{1}{|a|}\right)
+\frac{b}{|a|},$  and thus,
$$\displaystyle
\left| \frac{a+b}{|a+b|}-\frac{a}{|a|}\right|
\le |a+b|\frac{\left||a|-|a+b|\right|}{|a+b||a|}
+\frac{|b|}{|a|}
\le 2\frac{|b|}{|a|}.
$$

\paragraph{Gradient trajectories} 

\smallskip

In view of our regularity assumptions one considers the {\it gradient flow} $ \Phi_t:\RR^{d}\to\RR^d$, $t\geq 0$, of $f$, which is defined through 
$$
\begin{aligned}
\partial_t \Phi_t(x)&=-\nabla f\left(\Phi_t(x)\right)\quad\text{for }t\in\RR,\\
\Phi_0(x)&=x, \quad\text{for }x\in\RR^d.
\end{aligned}
$$ 
If $f$ is coercive, and generic in the sense of $C^2$ topology \cite[Theorem 6.2]{golubitsky2012stable}, i.e. $f$ is a Morse function,  then any critical point $x^*$ of $f$ satisfies~\Cref{ass:H}. In that case gradient curves generically converge with respect to initial conditions towards local minimizers, see Theorem~\ref{th:align}~(i). The latter seems to be due, in essence, to \cite{thom1949partition}; it was rediscovered several times, e.g., by \cite{pemantle1990nonconvergence, goudou2009gradient,lee2016gradient} under different forms and settings.

\smallskip

%%%%%%%%%%%%%%%%%%%%%%%%%%%%%%%%%%%%%%%%%%%%%%%%%%%%%%%
\subsection{Directional convergence: the generic case}

In this section we focus our attention to local minimizers. 

Based on some fundamental results by Hartman (see \cite{MR141856} and Chapter 9 in \cite{MR1929104}), we establish in Theorem~\ref{HG_cas_continu}  below that, generically in terms of the initial condition $x$ near $x^*$, the asymptotic direction of the trajectory $(\Phi_t(x))_{t\geq 0}$  belongs to the eigenspace associated with the smallest eigenvalue of $\nabla^2f(x^*)$. \Cref{th:align} shows that this phenomenon is actually generic over $\RR^d$ under coercivity assumptions, and that for sufficiently smooth functions generic convergence rates may be provided. 

As in Section~\ref{preliminaries}, consider a local parametrization $\{(\lambda_i(x),v_i(x))\,:\,i=1,\hdots, d\}$ of eigenvalues and eigenvectors of $\nabla^{2}f(x)$, for $x$ near $x^*$, satisfying \Cref{ass:min}. Let us introduce a useful partition of $\RR^{d}$. 
Let ${\mathcal S}_{0}=\{0\}$ and, for every $1\le i\le d$, set
\begin{equation} 
\label{lem:partition}
{\mathcal S}_i=\left\{\sum_{j=d-i+1}^d \alpha_j v_j(x^*)\,:\,\alpha_{j}\in\RR\;\;\;\text{for all $j=d-i+1,\hdots,d\;$ with }\;\alpha_{d-i+1}\neq 0\right\}.
\end{equation}
Note that each ${\mathcal S}_i$ is a manifold of dimension $i$ and $\{{\mathcal S}_{i}\}_{i=0}^{d}$ forms a partition of $\RR^d$. Moreover, ${\mathcal S}_d$ is dense and the radial projection $\radproj$ maps ${\mathcal S}_d$ onto a dense open subset of the unit sphere. 

For $(t,y)$ in $[0, \infty)\times \RR^d$  set
\begin{equation}
 \Psi_t(y)= \exp\left(-\nabla^2f(x^{*}) t\right)
\label{def:Psi}
\end{equation}
the flow associated with the linearized system $y \mapsto -\nabla^2f(x^*)y$ on $\RR^d$. Note that for every $i=1,\hdots,d$ and $y=\sum_{j=d-i+1}^d \alpha_j v_j(x^*)\in {\mathcal S}_{i}$ we have 
\begin{equation}
\forall\,t\in [0,\infty),\quad \Psi_t(y)=\sum_{j=d-i+1}^d \alpha_j\exp(-\lambda_j(x^*) t) v_{j}(x^*),
\label{eq:Psi_t_explicit}
\end{equation}
and hence, by~\eqref{valeurs_propres_ordonnees},
\begin{equation}
\forall\,t\in [0,\infty), \quad
 |\Psi_t(y)|\leq |y| \exp(-\lambda_{d-i+1}(x^*)t).
\label{eq:Psi_exponential_convergence}
\end{equation}

The following result describes the asymptotic directional behavior of the flow $\Psi$.

\begin{lemma}[Directional convergence  quadratic case]
\label{lem:directional_behavior_linear_flow}
Let $i=2,\hdots,d$,  $y=\sum_{j=d-i+1}^d \alpha_j v_j(x^*)\in {\mathcal S}_{i}$. Then, for every $t\in(0,\infty)$, we have
\begin{gather}
\label{eq:estimate_secant_Psi}
\left|\rho\left(\Psi_t(y)\right)-\text{{\rm sign}}(\alpha_{d-i+1})v_{d-i+1}(x^*)\right|\leq \frac{2|y|}{|\alpha_{d-i+1}|}\exp\left(-(\lambda_{d-i+2}(x^*)-\lambda_{d-i+1}(x^*))t\right), \\
\begin{split}
\label{eq:estimate_derivative_Psi}
& \; \left|\rho\left(\partial_t \Psi_t(y)\right)+\text{{\rm sign}}(\alpha_{d-i+1})v_{d-i+1}(x^*)\right| \\ \leq & \; \frac{2|y|}{|\alpha_{d-i+1}|}\left(\frac{\lambda_{d}(x^*)}{\lambda_{d-i+1}(x^*)}\right)\exp\left(-(\lambda_{d-i+2}(x^*)-\lambda_{d-i+1}(x^*))t\right).
\end{split}
\end{gather}
\end{lemma}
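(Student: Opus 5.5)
The plan is a direct computation from the explicit formula \eqref{eq:Psi_t_explicit}, combined with the elementary radial-projection inequality \eqref{eq:inegalite_basique_normalisee}. Write $k=d-i+1$, $\lambda_j=\lambda_j(x^*)$ and $v_j=v_j(x^*)$. Since $\rho$ is invariant under positive scaling, multiply $\Psi_t(y)$ by $e^{\lambda_k t}>0$:
\[
e^{\lambda_k t}\Psi_t(y)=\alpha_k v_k+b(t),\qquad b(t)=\sum_{j=k+1}^d \alpha_j e^{-(\lambda_j-\lambda_k)t}v_j .
\]

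Set $a=\alpha_k v_k$. Then $|a|=|\alpha_k|\neq 0$, and $a+b(t)\neq 0$ because the $v_j$ are orthonormal and the $v_k$-coefficient is nonzero. Moreover $\rho(a)=\operatorname{sign}(\alpha_k)v_k$.

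By orthonormality and the ordering \eqref{valeurs_propres_ordonnees}, each factor satisfies $e^{-(\lambda_j-\lambda_k)t}\le e^{-(\lambda_{k+1}-\lambda_k)t}$ for $j\ge k+1$ and $t>0$. Hence
\[
|b(t)|\le \Big(\sum_{j>k}\alpha_j^2\Big)^{1/2}e^{-(\lambda_{k+1}-\lambda_k)t}\le |y|\,e^{-(\lambda_{k+1}-\lambda_k)t}.
\]
Applying \eqref{eq:inegalite_basique_normalisee} gives $|\rho(\Psi_t(y))-\operatorname{sign}(\alpha_k)v_k|\le 2|b(t)|/|\alpha_k|$, which is \eqref{eq:estimate_secant_Psi}.

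For \eqref{eq:estimate_derivative_Psi}, differentiate the formula: $\partial_t\Psi_t(y)=-\sum_{j\ge k}\lambda_j\alpha_j e^{-\lambda_j t}v_j$. Multiply by the positive factor $e^{\lambda_k t}/\lambda_k$; this is allowed because $\lambda_k>0$ by \Cref{ass:min}, which is the standing setting of this section. This gives $-(\alpha_k v_k+\tilde b(t))$ with
\[
\tilde b(t)=\sum_{j>k}\frac{\lambda_j}{\lambda_k}\alpha_j e^{-(\lambda_j-\lambda_k)t}v_j .
\]
Using $\rho(-x)=-\rho(x)$ and the bound $\lambda_j\le\lambda_d$, we get $|\tilde b(t)|\le \frac{\lambda_d}{\lambda_k}|y|e^{-(\lambda_{k+1}-\lambda_k)t}$. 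Applying \eqref{eq:inegalite_basique_normalisee} once more yields the claim.

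There is no real obstacle. The only points needing care are that the vectors $a$ and $a+b$ are nonzero, which orthonormality guarantees, and that the sign handling and positivity of $\lambda_k$ are used when rescaling the derivative.
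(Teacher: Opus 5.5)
Your proposal is correct and follows essentially the same route as the paper: you rescale by the positive factor $e^{\lambda_{d-i+1}t}$ (and by $1/\lambda_{d-i+1}$ for the velocity), split off the leading $v_{d-i+1}$ term, and apply \eqref{eq:inegalite_basique_normalisee}. The only differences are cosmetic. The paper also divides by $|\alpha_{d-i+1}|$ so that $a$ is a unit vector, and it writes out only the secant estimate, whereas you carry out the derivative case explicitly, including the needed positivity of the eigenvalues.
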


\begin{proof*}
We only prove~\eqref{eq:estimate_secant_Psi} as the proof of~\eqref{eq:estimate_derivative_Psi} is analogous. Since $\exp(\lambda_{d-i+1}(x^*)t) >0$, we have $\displaystyle \rho\left(\Psi_t(y)\right)=\rho\left(\frac{1}{|\alpha_{d-i+1}|}\exp(\lambda_{d-i+1}(x^*)t)\Psi_t(y)\right)$. It follows from~\eqref{eq:inegalite_basique_normalisee} and~\eqref{eq:Psi_t_explicit} that 
\begin{align*}
& \; \left|\rho\left(\Psi_t(y)\right)-\text{{\rm sign}}(\alpha_{d-i+1})v_{d-i+1}(x^*)\right|\\
= & \; \left|\rho\left(\frac{1}{|\alpha_{d-i+1}|}\exp(\lambda_{d-i+1}(x^*)t)\Psi_t(y)\right)-\rho(\text{{\rm sign}}(\alpha_{d-i+1})v_{d-i+1}(x^*))\right|\\
\leq & \; \frac{2}{|\alpha_{d-i+1}|}\left|\sum_{j=d-i+2}^d \alpha_j\exp(-(\lambda_j(x^*)-\lambda_{d-i+1}(x^*))t) v_{j}(x^*)\right| \\
\leq & \; \frac{2|y|}{|\alpha_{d-i+1}|}\exp\left(-(\lambda_{d-i+2}(x^*)-\lambda_{d-i+1}(x^*))t\right).
\end{align*}
Hence the result.\end{proof*} 

The next result shows, in particular, that, as $t\to\infty$, the directions of the trajectories $(\Phi_t(x))_{t\geq0}$ generically align, in terms of the initial condition $x\in B(x^*,\eta)$, with the tangent space to the talweg at $x^*$.

\begin{theorem}[Directional convergence of the gradient flow]
\label{HG_cas_continu}
Under Assumptions \ref{ass:H} and \ref{ass:min}, the following results  hold:\\
(i) {\rm ($C^1$ conjugacy)} For $\eta>0$ small enough there exists a local $C^1$-diffeomorphism $H\colon B(x^*,\eta)\to \RR^{d}$ satisfying $H(x^*)=0$ and $DH(x^*)=I_{d}$, such that
\be
H(\Phi_t(x))= \Psi_t(H(x)),\quad\text{for all }t\in [0,\infty) \text{ and } x\in B(x^*,\eta).
\label{conjugaison_cas_continu}
\ee
(ii) {\rm (Stable stratification)} Setting ${\mathcal S}_{i}^{H}(x^*):=H^{-1}({\mathcal S}_{i})$ for all $i=0,\hdots,d$,  ${\mathcal S}_{i}^{H}(x^*))$ is a 
manifold of dimension $i$,  $\{{\mathcal S}_{i}^{H}(x^*)\}_{i=0}^{d}$ forms a partition of $B(x^*,\eta)$ such that $\Phi_t({\mathcal S}_{i}^{H}(x^*))\subset {\mathcal S}_{i}^{H}(x^*)$ for all $t\geq0$\footnote{When no confusion can occur, we shall write ${\mathcal S}_{i}^{H}={\mathcal S}_{i}^{H}(x^*)$}.\\
(iii) {\rm (Directional convergence)} For all $t\in [0,\infty)$, and, for every $i=1,\hdots,d$ and $x\in {\mathcal S}_{i}^{H}$, the following hold:
\begin{align}
%& \exists\,c>0, \, \forall\,t\in[0,\infty)\quad\left|\Phi_t(x)-x^*\right|\leq c |H(x)| \exp(-\lambda_{d-i+1}(x^*)t),
%\label{eq:Phi_exp_conv}
& \left|\Phi_t(x)-x^*\right|= \mathrm O( \exp(-\lambda_{d-i+1}(x^*)t)),
\label{eq:Phi_exp_conv}\\
&\lim_{t \to \infty} \rho\left(\Phi_t(x)-x^*\right)=-\lim_{t \to \infty} \rho\left(\partial_t \Phi_t(x) \right)
 \in \{-v_{d-i+1}(x^*), v_{d-i+1}(x^*)\}.
\label{egalite_de_limites}
\end{align}
\end{theorem}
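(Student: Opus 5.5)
For part (i) I would invoke Hartman's $C^1$ linearization theorem \cite{MR141856} (see also Chapter 9 of \cite{MR1929104}). It says that a $C^2$ vector field near a hyperbolic sink, that is, one whose linearization has all eigenvalues with negative real part, is $C^1$-conjugate to its linearization. Our field $F=-\nabla f$ is $C^2$ because $f$ is $C^3$. Moreover $DF(x^*)=-\nabla^2 f(x^*)$ has eigenvalues $-\lambda_j(x^*)<0$ by \Cref{ass:min}, so no non-resonance condition is needed. This gives a $C^1$ diffeomorphism $H$ near $x^*$ with $H(x^*)=0$ and $DH(x^*)=I_d$, conjugating $\Phi_t$ to $\Psi_t$ for small times; the normalization $DH(x^*)=I_d$ can always be arranged by composing with a linear map commuting with $\Psi_t$. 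To extend the identity to all $t\geq0$, I would shrink $\eta$ so that $B(x^*,\eta)$ lies in a forward-invariant sublevel set $[f\le r]_{x^*}$ contained in the domain of $H$. Then both sides of \eqref{conjugaison_cas_continu} are defined for all $t\ge0$, and the flow property extends the identity from small times to all times.

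Part (ii) then follows from the linear picture. The sets $\mathcal S_i$ partition $\RR^d$ and are $\Psi_t$-invariant, since by \eqref{eq:Psi_t_explicit} the coefficient $\alpha_{d-i+1}$ stays nonzero and the higher-index coefficients stay zero. Each $\mathcal S_i$ is an $i$-dimensional submanifold, namely an open subset of a linear subspace. Pulling back by the diffeomorphism $H$ preserves all of these properties. Invariance reads $\Phi_t(H^{-1}(\mathcal S_i))=H^{-1}(\Psi_t(\mathcal S_i))\subset H^{-1}(\mathcal S_i)$, intersected with the domain.

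For (iii), fix $x\in\mathcal S_i^H$ and set $y=H(x)$. Since $H^{-1}$ is $C^1$, it is Lipschitz near $0$, so $|\Phi_t(x)-x^*|\le L|\Psi_t(y)|$, and \eqref{eq:Psi_exponential_convergence} gives \eqref{eq:Phi_exp_conv}. For the secant direction I would write
\[
\Phi_t(x)-x^*=H^{-1}(\Psi_t(y))-H^{-1}(0)=\Psi_t(y)+o(|\Psi_t(y)|),
\]
using $D H^{-1}(0)=I_d$. Applying \eqref{eq:inegalite_basique_normalisee} with $a=\Psi_t(y)$ and $b$ the $o(\cdot)$ remainder gives $|\rho(\Phi_t(x)-x^*)-\rho(\Psi_t(y))|\to0$. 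Combined with \Cref{lem:directional_behavior_linear_flow} (or, when $i=1$, the fact that $\Psi_t(y)$ lies on the line $\RR v_d(x^*)$ with constant sign), this yields $\rho(\Phi_t(x)-x^*)\to \mathrm{sign}(\alpha_{d-i+1})v_{d-i+1}(x^*)$.

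The derivative limit is the step needing a little care. I would write
\[
\partial_t\Phi_t(x)=-\nabla f(\Phi_t(x))=-\nabla^2 f(x^*)(\Phi_t(x)-x^*)+O(|\Phi_t(x)-x^*|^2).
\]
I would then compare $-\nabla^2 f(x^*)(\Phi_t(x)-x^*)$ with $\partial_t\Psi_t(y)=-\nabla^2 f(x^*)\Psi_t(y)$. Their difference is $o(|\Psi_t(y)|)$, while $|\partial_t\Psi_t(y)|\ge\lambda_1(x^*)|\Psi_t(y)|$, so \eqref{eq:inegalite_basique_normalisee} applies again and \eqref{eq:estimate_derivative_Psi} finishes the argument. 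The main obstacle is ensuring the remainders are $o$ of the leading term. This relies on $\Psi_t(y)\neq0$ for $y\neq0$ and on the lower bound $|\Psi_t(y)|\ge |y|e^{-\lambda_d t}$, so that the quadratic remainders, which are $O(|\Psi_t(y)|^2)$, are indeed negligible. The case $x=x^*$ ($i=0$) is excluded, since the statement only concerns $i\ge1$.
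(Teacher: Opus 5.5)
Your proof is correct and follows the paper's overall strategy: Hartman's $C^1$ linearization, pull-back of the linear stratification $\{\mathcal S_i\}$, and comparison with $\Psi_t$ through $DH^{-1}(0)=I_d$ and \eqref{eq:inegalite_basique_normalisee}. There are two local differences worth noting.

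For (i), the paper applies Hartman's theorem only to the time-one map, obtaining $h\circ\Phi_1=\Psi_1\circ h$. It then builds a flow conjugacy by Sternberg's averaging $H(x)=\int_0^1 e^{-As}h(\Phi_s(x))\,\dd s$, with $A=-\nabla^2 f(x^*)$ (as in \cite[Lemma~4]{MR96853}), and gets forward invariance from \Cref{lem_Step_1}. You instead cite the flow version of Hartman's theorem directly and extend the identity to all $t\ge0$ through a forward-invariant sublevel set. This is equally valid.

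For the velocity in (iii), the paper differentiates the conjugacy to get $\partial_t\Phi_t(x)=DH^{-1}(\Psi_t(H(x)))\,\partial_t\Psi_t(H(x))$. The error is then bounded by $\|DH^{-1}(\Psi_t(H(x)))-I_d\|\,|\partial_t\Psi_t(H(x))|$, using only the $C^1$ regularity of $H$. You instead Taylor-expand $\nabla f$ at $x^*$ and combine this with the secant comparison and the bound $|\nabla^2 f(x^*)z|\ge\lambda_1(x^*)|z|$. This is exactly the argument the paper uses in the discrete setting (\Cref{HG_cas_discret}), where there is no time derivative of the conjugacy to exploit. Your route therefore carries over verbatim to gradient descent, while the paper's is slightly more economical in the continuous case.

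Two small remarks. First, the lower bound $|\Psi_t(y)|\ge|y|e^{-\lambda_d(x^*)t}$ is not needed: $\Psi_t(y)\neq0$ and $\Psi_t(y)\to0$ already make the $\mathrm O(|\Psi_t(y)|^2)$ remainder $\mathrm o(|\Psi_t(y)|)$. Second, \eqref{eq:estimate_derivative_Psi} is stated only for $i\ge2$. For $i=1$ the velocity limit follows from the same trivial observation you made for the secant: $\partial_t\Psi_t(y)=-\lambda_d(x^*)\alpha_d e^{-\lambda_d(x^*)t}v_d(x^*)$ has constant direction $-\mathrm{sign}(\alpha_d)v_d(x^*)$.
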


\begin{proof*} Let $\eta>0$ be as in Lemma~\ref{lem_Step_1}. It follows from~\cite[Theorem (I)]{MR141856} that, reducing $\eta$ if necessary, there exists $h\colon B(x^*,\eta)\to\RR^{d}$ of class $C^{1}$ such that $h(x^*)=0$, $Dh(x^*)=I_{d}$, and $h(\Phi_1(x))= \Psi_1(h(x))$ for all $x\in B(x^*,\eta)$. Set $A=-\nabla^{2}f(x^*)$ and define $H\colon B(x^*,\eta)\to\RR^{d}$ by
$$
H(x)=\int_{0}^{1}e^{-As}h\left(\Phi_s(x)\right)\dd s,\quad\text{for all }x\in B(x^*,\eta).
$$
From classical results $H$ is $C^1$. A
 change of variable as in the proof of~\cite[Lemma~4]{MR96853} (see also~\cite[Section 9.3.3]{MR4423370}), shows  that $H$ satisfies~\eqref{conjugaison_cas_continu}. Moreover, since $\Phi_s(x^*)=x^*$ standard computations show  that  $D_{x}\Phi_s(x^*)=e^{As}$ for all $s\in [0,\infty)$. On the other hand, 
$H(x^*)=0$ and, since $Dh(x^*)=I_{d}$, we have
$$
DH(x^*)=\int_{0}^{1}e^{-As}Dh(\Phi_s(x^*))D_{x}\Phi_s(x^*)\dd s=I_{d}.
$$
In particular, by the inverse function theorem, $H$ is a local $C^1$-diffeomorphism. This proves (i).

Let us prove (ii) and (iii). Note that~\eqref{eq:Psi_t_explicit} implies that, for every $i=1,\hdots,d$ and $t\in [0,\infty)$, $\Psi_t({\mathcal S}_{i})\subset {\mathcal S}_{i}$, which, by~\eqref{conjugaison_cas_continu}, implies that $\Phi_t({\mathcal S}_{i}^{H})\subset {\mathcal S}_{i}^{H}$ for all $t\in [0,\infty)$. Moreover, for every  $x\in B(x^*,\eta)$ and $t\in [0,\infty)$, we have $\Phi_t(x)-x^*=H^{-1}\left(\Psi_t(H(x))\right)-H^{-1}(0)$ and, hence,~\eqref{eq:Phi_exp_conv} follows from the boundedness of $DH^{-1}$ and~\eqref{eq:Psi_exponential_convergence}. 

Let $x\in {\mathcal S}_{i}^{H}$ with $i\geq1$. Note that since $i\neq0$, we have $\Phi_t(x)-x^*\neq 0$ and $\partial_t\Phi_t(x)\neq 0$ for all $t\geq0$ so that both directions can be evaluated. For every $j=1,\hdots,d$, set $\alpha_{j}=\langle H(x),v_{j}(x^*)\rangle$. For every $t\in (0,\infty)$ we have
\begin{equation}
\begin{split}
& \left|\rho\left(\Phi_t(x)-x^{*}\right)-\text{{\rm sign}}(\alpha_{d-i+1})v_{d-i+1}(x^*)\right|\\
\leq & \left|\rho\left(\Phi_t(x)-x^{*}\right)-\rho\left(\Psi_t(H(x))\right)\right|+\left|\rho\left(\Psi_t(H(x))\right)-\text{{\rm sign}}(\alpha_{d-i+1})v_{d-i+1}(x^*)\right|.
\label{eq:triangular_inequality_00}
\end{split}
\end{equation}
By~\eqref{eq:Psi_t_explicit} and the orthogonality of the family $\{v_{i}(x^*)\}_{i=1}^{d}$ we have 
$$
\left|\exp(\lambda_{d-i+1}(x^*)t)\Psi_t(H(x))\right|\geq |\alpha_{d-i+1}|>0.
$$
Thus,~\eqref{eq:inegalite_basique_normalisee} yields 
\begin{equation}
\begin{split}
& \left|\rho\left(\Phi_t(x)-x^{*}\right)-\rho\left(\Psi_t(H(x))\right)\right|\\
=& \left|\rho\left(\exp(\lambda_{d-i+1}(x^*)t)\left(\Phi_t(x)-x^{*}\right)\right)-\rho\left(\exp(\lambda_{d-i+1}(x^*)t)\Psi_t(H(x))\right)\right|\\
\leq & \frac{2\exp(\lambda_{d-i+1}(x^*)t)}{|\alpha_{d-i+1}|}\left|\Phi_t(x)-x^*-\Psi_t(H(x))\right|.
\label{eq:est_rho_phi_t_continu_1_bis}
\end{split}
\end{equation}
On the other hand, it follows from~\eqref{conjugaison_cas_continu} and $DH^{-1}(0)=I_{d}$ that 
\begin{equation}
\Phi_t(x)-x^*-\Psi_t(H(x))=\left(\int_{0}^{1}\left(DH^{-1}(\tau \Psi_t(H(x)))-DH^{-1}(0)\right)\dd \tau\right) \Psi_t(H(x)),
\label{eq:Phi_and_Psi}
\end{equation}
which, together with~\eqref{eq:est_rho_phi_t_continu_1_bis},~\eqref{eq:Psi_exponential_convergence}, and the continuity of $H^{-1}$, implies that 
\begin{equation}
\begin{split}
& \; \left|\rho\left(\Phi_t(x)-x^{*}\right)-\rho\left(\Psi_t(H(x))\right)\right|\\
 \leq & \; \frac{2|H(x)|}{|\alpha_{d-i+1}|}\int_{0}^{1}\left\|DH^{-1}(\tau \Psi_t(H(x)))-DH^{-1}(0)\right\|\dd \tau\underset{t\to\infty}{\longrightarrow} 0.
\label{eq:difference_Phi_Psi_continu_bis} 
\end{split}
\end{equation}
Therefore, we deduce from~\eqref{eq:triangular_inequality_00} and Lemma~\ref{lem:directional_behavior_linear_flow} that 
\begin{equation}
\left|\rho\left(\Phi_t(x)-x^* \right) - \text{{\rm sign}}(\alpha_{d-i+1}) v_{d-i+1}(x^*)\right|\underset{t\to\infty}{\longrightarrow} 0.
\label{final_bound_1}
\end{equation}
Similarly, we have 
\begin{equation}
\begin{split}
& \left|\rho\left(\partial_t \Phi_t(x) \right)+\text{{\rm sign}}(\alpha_{d-i+1})v_{d-i+1}(x^*)\right|\\[6pt]
\leq & \left|\rho\left(\partial_t \Phi_t(x) \right)-\rho\left(\partial_t \Psi_t(H(x)) \right)\right|
+\left|\rho\left(\partial_t \Psi_t(H(x)) \right)+\text{{\rm sign}}(\alpha_{d-i+1})v_{d-i+1}(x^*)\right|.
\label{eq:first_decomposition_rho_difference_Phi_continuous}
\end{split}
\end{equation}
By~\eqref{eq:Psi_t_explicit}, we have
\begin{gather*}
\forall\,t\in[0,\infty),\quad \partial_t \Psi_t(H(x))=-\sum_{j=d-i+1}^d\alpha_{j}\lambda_{j}(x^*)\exp(-\lambda_{j}(x^*)t)v_{j}(x^*).
\end{gather*}
The orthogonality of the family $\{v_{i}(x^*)\}_{i=1}^{d}$ implies that
\begin{gather*}
|\lambda_{d-i+1}(x^*)^{-1}\exp(\lambda_{d-i+1}(x^*)t)\Psi_t(H(x))|\geq |\alpha_{d-i+1}|>0.
\end{gather*}
Thus,~\eqref{eq:inegalite_basique_normalisee} yields 
\begin{equation}
\left|\rho\left(\partial_t \Phi_t(x) \right)-\rho\left(\partial_t \Psi_t(H(x)) \right)\right|\leq\frac{2\exp(\lambda_{d-i+1}(x^*)t)}{\lambda_{d-i+1}(x^*)|\alpha_{d-i+1}|}\left|\partial_t \Phi_t(x)-\partial_t \Psi_t(H(x))\right|.
\label{eq:diff_der_Phi_continuous}
\end{equation}
Since~\eqref{conjugaison_cas_continu} implies $
DH(\Phi_t(x))\partial_t \Phi_t(x)= \partial_t \Psi_t(H(x))$, we deduce that 
\begin{equation}
\left|\partial_t \Phi_t(x)-\partial_t \Psi_t(H(x))\right|
\leq\left|\partial_t \Psi_t(H(x))\right| \|DH^{-1}(\Psi_t(H(x)))-DH^{-1}(0)\|,
\label{eq:diff_der_Phi_continuous_bis}
\end{equation}
which, together with~\eqref{eq:first_decomposition_rho_difference_Phi_continuous},~\eqref{eq:diff_der_Phi_continuous},~\eqref{eq:Psi_exponential_convergence}, the continuity of $H^{-1}$, and Lemma~\ref{lem:directional_behavior_linear_flow}, implies that 
\begin{equation}
\left|\rho\left(\partial_t \Phi_t(x) \right) + \text{{\rm sign}}(\alpha_{d-i+1}) v_{d-i+1}(x^*)\right|\underset{t\to\infty}{\longrightarrow} 0.
\label{final_bound_2}
\end{equation}
Therefore,~\eqref{egalite_de_limites} follows from~\eqref{final_bound_1} and~\eqref{final_bound_2}.
\end{proof*}

\begin{theorem}\label{th:align}
If \Cref{ass:H} holds at each critical point then: 

(i) {\rm (Generic alignment)} If $f$ is coercive, the set of initial conditions whose gradient flow trajectories converge to local minimizers of $f$ with the velocities and the secant aligning to the talweg has full Lebesgue measure.

(ii) {\rm (Alignment rate)}  Assume that $f$ is $C^\infty${\rm(\footnote{Actually we may assume that $f$ is of class $C^k$ with $k$ sufficiently large, see \cite{MR96853} for more details.})}, let  $x$ be an initial condition so that, $\Phi_t(x)\to x^*$ as $t\to\infty$ with $\Phi_t(x)\in {\mathcal S}_d^H(x^*)$ for some $t\geq 0${\rm(\footnote{This corresponds to the generic case in (i).})}, and suppose, in addition, the non resonance conditions at $x^*$: for all integers $m_{1},\hdots,m_{d}$ such that $\sum_{j=2}^{d}m_{j}\geq 2$, we have\footnote{Observe this is a generic condition as the $m_i$ are integers.}
$$
\forall\; i=1,\hdots,d,\quad\lambda_{i}(x^*)\neq\sum_{j=1}^{d}m_{j}\lambda_{j}(x^*).
$$
Then the trajectory  aligns  with the tangent to the talweg at a rate governed by:
\begin{align*}
\mathrm{dist}\!\Big(
\rho(\partial_t \Phi_t(x)),
\,T_{x^*}\tal([r^*,\bar r))
\Big)
&=\mathrm{O}\!\left(\exp(-(\lambda_2-\lambda_1)t)) + \mathrm{O}(\exp(-\lambda_1 t)\right),\\
\mathrm{dist}\!\Big(
\rho(\Phi_t(x)-x^*),
\,T_{x^*}\tal([r^*,\bar r))
\Big)
&=\mathrm{O}\!\left(\exp(-(\lambda_2-\lambda_1)t)) + \mathrm{O}(\exp(-\lambda_1 t)\right),
\end{align*}
where $\bar r>r^*$ is sufficiently close to $r^*$.
\end{theorem}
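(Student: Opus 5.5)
For part (i) I will combine a global generic-convergence result with the local stratification of \Cref{HG_cas_continu}. Since $f$ is coercive, every trajectory $\Phi_t(x)$ is bounded. By \Cref{ass:H} the critical points are nondegenerate, hence isolated, and only finitely many lie in a sublevel set. A Lojasiewicz-type argument, which is trivial here because the critical points are nondegenerate, then gives convergence of each trajectory to a single critical point $x^*$.

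The set of initial conditions converging to a critical point that is not a minimizer is the union, over the countably many such points, of $\bigcup_{n\in\NN}\Phi_{n}^{-1}(W^s_{loc}(x^*))$. Here $W^s_{loc}(x^*)$ is the local stable manifold, of dimension $<d$ by Hartman--Grobman/stable manifold theory. Each $\Phi_{n}$ is a diffeomorphism, being a time-$n$ map of a $C^2$ vector field, so preimages of null sets are null. Hence this union has measure zero.

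For a local minimizer $x^*$, the bad initial conditions near $x^*$ are $\bigcup_{i<d}{\mathcal S}_i^H(x^*)$. These are submanifolds of dimension $<d$, hence null. The global bad set is then $\bigcup_{n}\Phi_n^{-1}\big(\bigcup_{i<d}{\mathcal S}_i^H(x^*)\big)$, again null. On the complement, \Cref{HG_cas_continu}(iii) with $i=d$ gives alignment of the secant and the velocity with $\pm v_1(x^*)$. This direction is $T_{x^*}\tal$ by \Cref{th:talweg_is_gradient_extremal}.

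For (ii), the plan is to upgrade the conjugacy $H$ to one with $C^2$ regularity (indeed $C^k$), using Sternberg's theorem \cite{MR96853}. The non-resonance hypothesis is exactly what allows a smooth linearization $H$ with $H(x^*)=0$ and $DH(x^*)=I_d$. Replacing $x$ by $\Phi_{t_0}(x)$, I may assume $x\in{\mathcal S}_d^H$, so $\alpha_1\neq0$. The estimates \eqref{eq:triangular_inequality_00}--\eqref{eq:difference_Phi_Psi_continu_bis} still apply. The linear term is bounded by \Cref{lem:directional_behavior_linear_flow}, giving $\mathrm O(e^{-(\lambda_2-\lambda_1)t})$. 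For the nonlinear term, $DH^{-1}$ is now $C^1$, so $\|DH^{-1}(y)-DH^{-1}(0)\|\le C|y|$. Plugging in $|\Psi_t(H(x))|\le|H(x)|e^{-\lambda_1t}$ from \eqref{eq:Psi_exponential_convergence} gives $\mathrm O(e^{-\lambda_1 t})$. The same scheme applies to the velocity via \eqref{eq:diff_der_Phi_continuous_bis}. Finally, the distance from a unit vector to the line $\RR v_1(x^*)$ is at most the distance to $\pm v_1(x^*)$, which concludes the proof.

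The main obstacle is checking that the non-resonance condition as stated (only $\sum_{j\ge2}m_j\ge2$, i.e. a partial condition) suffices for a $C^2$ (or $C^1$ with Lipschitz derivative) linearization. All eigenvalues are positive, so we are in the Poincaré domain and only finitely many resonances are possible. Even so, resonances of the form $\lambda_i=m_1\lambda_1$ are allowed by the hypothesis. My first approach would be to note that Sternberg's theorem with finitely many relevant resonance conditions yields $C^k$ conjugacy. Failing that, I would accept a $C^{1,\beta}$ conjugacy and verify that the Lipschitz estimate on $DH^{-1}$ near $0$ is all that is used.
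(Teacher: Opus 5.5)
Your treatment of (i) and the estimates in (ii) are the paper's argument. In (i) you use the same decomposition of $\RR^d$ into integer-time preimages of the strata ${\mathcal S}_i^H(x^*)$, $i<d$, at minimizers and of local stable manifolds at unstable critical points. In (ii) you take a $C^2$ conjugacy from Sternberg's theorem. Then $\|DH^{-1}(y)-DH^{-1}(0)\|\le C|y|$ turns \eqref{eq:difference_Phi_Psi_continu_bis} and \eqref{eq:diff_der_Phi_continuous_bis} into $\mathrm O(e^{-\lambda_1 t})$ terms, and Lemma~\ref{lem:directional_behavior_linear_flow} supplies the $\mathrm O(e^{-(\lambda_2-\lambda_1)t})$ term.

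The only issue is your last paragraph. The obstacle you flag is real, and it cannot be overcome: with the hypothesis read literally ($m_j\ge 0$, sum over $j\ge 2$ only), the conclusion of (ii) is false.

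\textbf{Counterexample.} Take the function of Remark~\ref{rem:alignement-rate-cont} with $\lambda_2=2\lambda_1$. This resonance is not excluded, since $m_2\ge 2$ forces $\sum_j m_j\lambda_j\ge 4\lambda_1$. The computation in the Appendix now gives $\frac{d}{dt}\bigl(e^{2\lambda_1 t}x_2(t)\bigr)=-aC^2+\mathrm O(e^{-\lambda_1 t})$. Hence $x_2(t)=-aC^2\,t\,e^{-2\lambda_1 t}+\mathrm O(e^{-2\lambda_1 t})$, and so $\dist(\rho(x(t)),T_0\extun)\sim |aC|\,t\,e^{-\lambda_1 t}$. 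This is not $\mathrm O(e^{-\lambda_1 t})=\mathrm O(e^{-(\lambda_2-\lambda_1)t})$.

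\textbf{Why neither fallback works.} Sternberg's theorem needs exactly such resonances excluded. Indeed, here no conjugacy with locally Lipschitz $DH^{-1}$ exists: it would give $x(t)=\Psi_t(H(x_0))+\mathrm O(e^{-2\lambda_1 t})$, i.e.\ $x_2(t)=\mathrm O(e^{-2\lambda_1 t})$. A $C^{1,\beta}$ conjugacy only yields $\|DH^{-1}(y)-DH^{-1}(0)\|\le C|y|^{\beta}$, i.e.\ a nonlinear error of order $e^{-\beta\lambda_1 t}$, not the Lipschitz bound your argument uses.

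\textbf{Fix.} The hypothesis has to be the standard non-resonance condition $\sum_{j=1}^d m_j\ge 2$, as in Theorem~\ref{th:align_discrete}. The paper's one-line appeal to Sternberg tacitly uses this condition, and under it your proof is complete.
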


\begin{proof}
Item {\rm(i)} extends \cite{thom1949partition} and also \cite{pemantle1990nonconvergence,lee2016gradient}. 

 In view of \Cref{ass:H} all the critical points of $f$ are isolated; hence the set of critical points is at most countable. 
Local uniqueness of critical points entail the well known fact:
\begin{claim} By coercivity, $f$ is inf-bounded so the flow $\Phi_t$ is defined for all $t\geq 0$, and moreover one can prove that for all $x$ in $\RR^d$, $\Phi_t(x)$ converges to some $x^*$ belonging to the critical set $\crit f=(\nabla f)^{-1}(\{0\})$. %\jer{trouver ref}
\end{claim}
 
 Let us partition $\crit f$ into the set of local minimizers $\argminloc f$ and the set of  unstable critical points $\crit^u f$, i.e., those whose Hessian has a negative eigenvalue. Whence $\crit f=\argminloc f\cup \crit^u f.$\\
Let us recall two facts:
\begin{itemize}
\item[(a)] For each \(x^*\in \argminloc f\), with the notation \(\eta:=\eta_{x^*}\) of \Cref{HG_cas_continu}, we have
\[
B(x^*,\eta)=\bigcup_{i=1}^d {\mathcal S}_i^H(x^*).
\]
\item[(b)] For each \({x^*}\in \crit^u f\), using \Cref{ass:H}, we may apply the Hadamard--Perron decomposition into stable and unstable manifolds, see \cite{MR4423370}. Let \(W^{s}_{x^*}\) denote the local stable manifold of \({x^*}\), which has dimension strictly less than \(d\) as $x^*$ is not a local minimizer, and which is defined locally as 
$$W^s_{x^*}=\{x\in B(x^*,\varepsilon_{x^*}): \Phi_t(x)\to x^* \mbox{ as $t\to \infty$}\} \mbox{ for $\varepsilon_{x^*}>0$ small enough.}$$
\end{itemize}
We then define the following two subsets of $\RR^d$
\begin{align}
\NNN&=
\bigcup_{x^*\in \argminloc f}\bigcup_{i=1}^{d-1}\bigcup_{t\in\mathbb N}\Phi_t^{-1}\bigl({\mathcal S}_i^H(x^*)\bigr)
\;\cup\;
\bigcup_{x^*\in \crit^u f}\bigcup_{t\in\mathbb N}\Phi_t^{-1}\bigl(W^{s}_{x^*}\bigr),\label{eq:defNproof}\\
\FFF &=
\bigcup_{x^*\in \argminloc f}\bigcup_{t\in\mathbb N}\Phi_t^{-1}\bigl({\mathcal S}_d^H(x^*)\bigr).\label{eq:defFproof}
\end{align}

Let us observe (and check) that we have the partition
\begin{equation}\label{eq:part}
\mathbb R^d=\NNN\cup \FFF.
\end{equation}

By the fact that $\{{\mathcal S}_{i}^{H}\}_{i=1,\ldots,d}$ is a partition, we have $\NNN\cap\FFF=\emptyset$. Let  \(x\) in \(\mathbb R^d\). We know by Claim 1 that \(t\mapsto \Phi_t(x)\) converges to some critical point \(x^*\in \crit f\). Two cases may occur:

\noindent{Case 1: \(x^*\in \crit^u f\).} 
Since \(\Phi_t(x)\to x^*\), by definition of the stable manifold of \(x^*\), there exists \(T>0\) such that
$\Phi_t(x)$ in $W^{s}_{x^*}$ for all  $t\ge T.$
Thus, for any integer \(q\ge T\), \(\Phi_q(x)\in W^{s}_{x^*}\), and therefore
\[
x\in \Phi_q^{-1}(W^{s}_{x^*})\subset \NNN.
\]

\medskip

\noindent{Case 2: \(x^*\in \argminloc f\).} 
There exists \(T>0\) such that, for any  \(t\ge T\),
\[
\Phi_t(x)\in B(x^*,\eta_{x^*})=\bigcup_{i=1}^d {\mathcal S}_i^H(x^*).
\]
Take an integer $q\geq T$. Since \(\{{\mathcal S}_i^H(x^*)\}_{i=1}^d\) forms a partition, we have either \(\Phi_q(x)\in {\mathcal S}_d^H(x^*)\) or \(\Phi_q(x)\in \bigcup_{i=1}^{d-1} {\mathcal S}_i^H(x^*)\). Hence either \(x\in \FFF\) or \(x\in \NNN\). This proves \eqref{eq:part}.

\medskip

Let us observe that \(\NNN\) has Lebesgue measure zero. This follows from the fact that each \(\Phi_t\) is a diffeomorphism, and that the sets \({\mathcal S}_i^H(x^*)\) for \(i=1,\ldots,d-1\) and \(W^{s}_{x^*}\) are manifolds of dimension strictly less than \(d\). Hence \(\NNN\) is a countable union of sets of Lebesgue measure zero, and therefore has Lebesgue measure zero.

To conclude we just need to observe that $x\in \FFF$ implies that $x$ converges to some local minimizer $x^*$ with in addition $\Phi_t(x)\in {\mathcal S}_d^H$, so that \Cref{HG_cas_continu} applies and we have directional alignment of the secant and the velocity with the talweg.
 
Let us prove {\rm(ii)}. Recall that since $x^*$ is a local minimizer the tangent space to the talweg is $\RR v_1(x^*)$. Shifting time if necessary, we may assume   
$x\in {\mathcal S}_d^H$. Thus with the notation of \Cref{HG_cas_continu}, we have $\alpha_1\neq 0$ in the expression $H(x)=\sum_1^d  \alpha_i v_i(x^*)$.

By Sternberg's theorem \cite[Theorem 2]{MR96853}, under the non-resonance assumptions the conjugating diffeomorphism $H$ may be assumed to be of class $C^2$. In particular, shrinking neighborhoods if necessary, there exists $C>0$ such that $DH$ and $DH^{-1}$ are locally $C$-Lipschitz continuous.

We estimate the alignment rate for the direction of the secant. From \eqref{eq:triangular_inequality_00}, we have
\begin{equation}\label{eq:proof_align_secant_1}
\begin{split}
&\left|\rho\bigl(\Phi_t(x)-x^*\bigr)-\mathrm{sign}(\alpha_1)v_1(x^*)\right|\\
\leq\;&
\left|\rho\bigl(\Phi_t(x)-x^*\bigr)-\rho\bigl(\Psi_t(H(x))\bigr)\right|
+
\left|\rho\bigl(\Psi_t(H(x))\bigr)-\mathrm{sign}(\alpha_1)v_1(x^*)\right|.
\end{split}
\end{equation}
Using \eqref{eq:difference_Phi_Psi_continu_bis} and \eqref{eq:estimate_secant_Psi}, we obtain
\begin{align}
&\left|\rho\bigl(\Phi_t(x)-x^*\bigr)-\mathrm{sign}(\alpha_1)v_1(x^*)\right| \nonumber\\
&\leq
\frac{2|H(x)|}{|\alpha_1|}
\int_0^1
\left\|DH^{-1}\bigl(\tau\Psi_t(H(x))\bigr)-DH^{-1}(0)\right\|\,\dd\tau
+
\frac{2|H(x)|}{|\alpha_1|}\exp(-(\lambda_2-\lambda_1)t).
\label{eq:proof_align_secant_2}
\end{align}
Since $DH^{-1}$ is locally Lipschitz,
\[
\int_0^1
\left\|DH^{-1}\bigl(\tau\Psi_t(H(x))\bigr)-DH^{-1}(0)\right\|\,\dd\tau
\leq C |\Psi_t(H(x))|.
\]
Thus, using $|\Psi_t(H(x))|=\mathrm{O}(\exp(-\lambda_1t))$, yields
\begin{equation}\label{eq:proof_align_secant_3}
\left|\rho\bigl(\Phi_t(x)-x^*\bigr)-\mathrm{sign}(\alpha_1)v_1(x^*)\right|
\leq
\mathrm{O}(\exp(-\lambda_1 t))
+
\frac{2|H(x)|}{|\alpha_1|}\exp(-(\lambda_2-\lambda_1)t).
\end{equation}
Since $\mathrm{sign}(\alpha_1)v_1(x^*)\in T_{x^*}\tal([r^*,\bar r))$, we infer
\begin{equation}\label{eq:proof_align_secant_4}
\mathrm{dist}\!\left(
\rho(\Phi_t(x)-x^*),\,T_{x^*}\tal([r^*,\bar r))
\right)
=
\mathrm{O}\!\left(\exp(-(\lambda_2-\lambda_1)t)\right)
+
\mathrm{O}\!\left(\exp(-\lambda_1 t)\right).
\end{equation}

The proof for the velocity alignment is similar as it also relies on the local Lipschitz continuity of $DH^{-1}$, on the inequalities in the proof of \Cref{HG_cas_continu} and on the estimate \eqref{eq:estimate_derivative_Psi}.\end{proof}

\begin{remark}[On the sharpness of the alignment rate]\label{rem:alignement-rate-cont}
In the quadratic case the directional convergence is governed by the spectral gap $\lambda_2-\lambda_1$. One might therefore wonder whether the alignment rate is always $\exp(-(\lambda_2-\lambda_1)t)$. In the nonlinear world the slower coordinate, corresponding to the small eigenvalue, can act back on the fastest one through the nonlinearity and  eventually mitigate the influence of $\lambda_2$. This is what happens with $f\colon\RR^2\to\RR$, defined by
\[
f(x_1,x_2)=\frac{1}{2}\lambda_1x_1^2+\frac{1}{2}\lambda_2x_2^2+a x_1^2x_2,
\quad a\neq0,
\mbox{ where }
0<2\lambda_1<\lambda_2.
\]
The origin is a nondegenerate critical point and
$
\nabla^2 f(0)=
\begin{pmatrix}
\lambda_1 & 0\\
0 & \lambda_2
\end{pmatrix}$. Thus, the tangent space to the talweg is $T_0\extun=\RR(1,0)$. Let $x_0=(\alpha_{1},\alpha_{2})\in\RR^{d}$, with $\alpha_1\neq 0$, and for every $t\in [0,\infty)$, set $x(t)=\Phi_{t}(x_0)$. We prove in Section~\ref{subsec:some_proof} in the Appendix that, provided that $|x_0|$ is small enough, 
\begin{equation}
\dist\!\left(
\rho(x(t)),
\,T_0\extun
\right)
= C\exp(-\lambda_1 t)+\mathrm o(\exp(-\lambda_1 t)), \text{ for some } C>0.
\label{eq:estimation_rho_x_t}
\end{equation}
Since here $\lambda_2-\lambda_1>\lambda_1$, the spectral gap would predict a faster decay of order $\exp(-(\lambda_2-\lambda_1)t)$. This example shows that such a rate cannot be expected in general.
\end{remark}

\begin{corollary}[Valley is generically reached in finite time]
\label{on_va_vers_V_eps_bis}	
Under Assumptions \ref{ass:H} and \ref{ass:min}, let $\eta>0$ and $H\colon B(x^*,\eta)\to\RR^{d}$ be as in Theorem~\ref{HG_cas_continu}. Then, for every $w>0$ and $x\in {\mathcal S}_{d}^{H}$, there exists $t_0\in [0,\infty)$ such that 
$$
\Phi_t(x)\in \vall_{f,w}(x^*),\quad\text{for all }t\geq t_{0}.
$$ 
\end{corollary}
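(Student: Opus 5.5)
The plan is to combine the directional convergence of Theorem~\ref{HG_cas_continu}(iii) with the conic approximation of the valley in Proposition~\ref{valley_inclusions}(i). Fix $w>0$ and choose $w'\in(0,w)$. By Proposition~\ref{valley_inclusions}(i), after diminishing $\bar r$ if necessary,
\[
\big(x^*+\vall_{q,w'}\big)\cap [f<\bar r]_{x^*}\subset \vall_{f,w}(x^*).
\]
It therefore suffices to show that, for $t$ large, $\Phi_t(x)\in[f<\bar r]_{x^*}$ and $\Phi_t(x)-x^*\in\vall_{q,w'}$.

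\emph{First inclusion.} Since $x\in{\mathcal S}_d^H$, estimate \eqref{eq:Phi_exp_conv} gives $\Phi_t(x)\to x^*$. The sets $[f<\bar r]_{x^*}$ are open neighborhoods of $x^*$, so $\Phi_t(x)\in[f<\bar r]_{x^*}$ for all $t$ beyond some $t_1$. In addition, $f$ decreases along the flow and the trajectory is connected, so once the trajectory enters this component it stays in it.

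\emph{Second inclusion.} The key observation is that $\vall_{q,w'}$ is a closed cone, invariant under positive scaling, and that the unit vectors $\pm v_1(x^*)$ lie in its interior relative to the sphere. Write $A=\nabla^2 f(x^*)$ and $\lambda_1=\lambda_1(x^*)$, and set
\[
g(u)=|(A^2-\lambda_1 A)u|-w'|Au|.
\]
Then $g(\pm v_1(x^*))=0-w'\lambda_1<0$. By continuity there is $\varepsilon>0$ such that $g(u)<0$ for every unit vector $u$ with $\min_\pm|u\mp v_1(x^*)|<\varepsilon$, so every such $u$ lies in $\vall_{q,w'}$. Theorem~\ref{HG_cas_continu}(iii) with $i=d$ gives
\[
\rho\big(\Phi_t(x)-x^*\big)\to \pm v_1(x^*),
\]
hence $\rho(\Phi_t(x)-x^*)$ lies within $\varepsilon$ of $\pm v_1(x^*)$ for $t\ge t_2$. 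Cone invariance then gives $\Phi_t(x)-x^*\in\vall_{q,w'}$ for $t\ge t_2$, and we conclude with $t_0=\max(t_1,t_2)$.

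The main obstacle is a bookkeeping issue: Proposition~\ref{valley_inclusions} requires diminishing $\bar r$, and $\vall_{f,w}(x^*)$ is defined relative to $\bar r$. I would handle this as follows. The defining inequality of $\vall_{f,w}$ is pointwise, and shrinking $\bar r$ only restricts the ambient component. Hence a point lying in the smaller-$\bar r$ valley also lies in the valley defined with the original $\bar r$, and the statement holds for the fixed $\bar r$ as well.
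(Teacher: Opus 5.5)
Your proposal is correct and follows the paper's proof. You fix $w'<w$, use Proposition~\ref{valley_inclusions}(i) to place a truncated cone $x^*+\vall_{q,w'}$ inside $\vall_{f,w}(x^*)$ near $x^*$, and then conclude from $\Phi_t(x)\to x^*$ together with $\rho(\Phi_t(x)-x^*)\to\pm v_1(x^*)$. Your explicit check that $\pm v_1(x^*)$ lies in the interior of the cone, via $g(\pm v_1(x^*))=-w'\lambda_1<0$, justifies a step the paper only asserts ("since $\vall_{q,w_1}$ is a cone generated by $v_1$").
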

\begin{proof*}
Let $0<w_{1}<w$. By Proposition~\ref{valley_inclusions}\,{\rm(i)}, there exists $\delta\in (0,\eta)$ such that
\begin{equation}
x^*+\bigl(\vall_{q,w_1}\cap B(0,\delta)\bigr)\subset \vall_{f,w}(x^*).
\label{eq:inclusions_continu}
\end{equation}
Assume for instance that $\displaystyle \rho(\Phi_t(x)-x^*)\to v_1$ as $t\to+\infty$. Since $\vall_{q,w_1}$ is a cone generated by $v_1$, this implies that for $t$ large enough, $\displaystyle \Phi_t(x)-x^*\in \vall_{q,w_1}$. On the other hand, we know that $\displaystyle \Phi_t(x)\to x^*$, as $t\to +\infty$, therefore, for $t$ sufficiently large, $\displaystyle \Phi_t(x)-x^*\in \vall_{q,w_1}\cap B(0,\delta)$. Using \eqref{eq:inclusions_continu}, we conclude that $\displaystyle \Phi_t(x)\in \vall_{f,w}(x^*)$ for all $t$ large enough.
\end{proof*}
 
Given a Lebesgue measurable set $S$, we denote by $\vol(S)=\displaystyle\int_S1\dd x$ its volume.
\begin{theorem}[Volume concentration in  valleys]
\label{rapport_de_volumes}	
Suppose that Assumptions  \ref{ass:H} and \ref{ass:min} hold and let $\eta>0$ be small enough. Then for every  measurable set $S \subset  B(x^*,\eta)$ of initial conditions, with $\vol(S)>0$, and $w>0$, we have

\begin{equation}
\lim_{t\to \infty}\frac{\vol\left(\Phi_t(S) \cap \vall_{f,w}(x^*)\right)}{\vol\left(\Phi_t(S)\right)}=1 .  
\label{eq:rapport_de_volumes}	
\end{equation}
\end{theorem}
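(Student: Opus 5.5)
The plan is to transport everything to the linear picture through the $C^1$ conjugacy $H$ of Theorem~\ref{HG_cas_continu}, since $\Phi_t=H^{-1}\circ\Psi_t\circ H$ on $B(x^*,\eta)$. We write $\Phi_t(S)\cap\vall_{f,w}(x^*)\supset \Phi_t(S\cap G_t)$, where $G_t$ is the set of initial points already in the valley at time $t$. The ratio in \eqref{eq:rapport_de_volumes} is at most $1$, so it suffices to show
$$\vol\bigl(\Phi_t(S\setminus G_t)\bigr)=o\bigl(\vol(\Phi_t(S))\bigr).$$
By the change of variables formula, $\vol(\Phi_t(A))=\int_A \det D\Phi_t(y)\,\dd y$ (Liouville: $\det D\Phi_t(y)=\exp(-\int_0^t\Delta f(\Phi_s(y))\dd s)>0$). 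The key step is therefore a uniform comparison of Jacobians. We have
$$D\Phi_t(y)=DH^{-1}(\Psi_t H(y))\,e^{-t\nabla^2f(x^*)}\,DH(y),$$
and on $B(x^*,\eta)$ the determinants of $DH$ and $DH^{-1}$ lie in $[c^{-1},c]$ for some $c\ge1$. Hence $\det D\Phi_t(y)=e^{-t\,\tr\nabla^2 f(x^*)}\,J_t(y)$ with $c^{-2}\le J_t(y)\le c^{2}$, uniformly in $y\in S$ and $t\ge0$. The time factor cancels in the ratio, and we get
$$\frac{\vol(\Phi_t(S\setminus G_t))}{\vol(\Phi_t(S))}\le c^4\,\frac{\vol(S\setminus G_t)}{\vol(S)}.$$
So the theorem reduces to showing $\vol(S\setminus G_t)\to0$, i.e.\ $\mathbf 1_{G_t}\to 1$ almost everywhere on $S$, followed by dominated convergence.

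The pointwise convergence follows from Corollary~\ref{on_va_vers_V_eps_bis}. Since ${\mathcal S}_{d}^{H}=H^{-1}({\mathcal S}_d)$ and $H$ is a diffeomorphism, $S\setminus{\mathcal S}_d^H$ is contained in a finite union of submanifolds of dimension $<d$, hence is Lebesgue null. For each $y\in S\cap{\mathcal S}_d^H$ the corollary gives $t_0(y)$ with $\Phi_t(y)\in\vall_{f,w}(x^*)$ for all $t\ge t_0(y)$. Thus $\mathbf 1_{S\setminus G_t}(y)\to0$ for a.e.\ $y\in S$, and since $\vol(S)<\infty$ we get $\vol(S\setminus G_t)\to 0$.

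The main obstacle is technical rather than conceptual. We must ensure $\Phi_t(S)\subset B(x^*,\eta)$ for all $t$, so that the conjugacy and the determinant bounds apply along the whole orbit and $\vall_{f,w}(x^*)$ (defined inside $[f<\bar r]_{x^*}$) is the relevant target. We handle this by taking $\eta$ small so that $B(x^*,\eta)\subset [f<\bar r]_{x^*}\subset B(x^*,\eta')$, where $\eta'$ is the radius on which $H$ is defined with bounded $DH$, $DH^{-1}$. The sublevel component is forward invariant because $f$ decreases along the flow. Measurability of $G_t=\Phi_t^{-1}(\vall_{f,w}(x^*))\cap S$ is clear because $\vall_{f,w}(x^*)$ is relatively closed and $\Phi_t$ is continuous. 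Finally, no uniformity of $t_0(y)$ in $y$ is needed, since dominated convergence absorbs it.
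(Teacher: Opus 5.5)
Your proof is correct, and its skeleton (change of variables, a ratio bound of the form constant times $\vol\bigl(S\setminus\Phi_t^{-1}(\vall_{f,w}(x^*))\bigr)/\vol(S)$, Corollary~\ref{on_va_vers_V_eps_bis}, nullity of $B(x^*,\eta)\setminus{\mathcal S}_d^H$, dominated convergence) is the paper's. The difference lies in how you obtain the uniform Jacobian bound.

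The paper starts from Liouville's formula $\det\partial_x\Phi_t(x)=\exp\bigl(-\int_0^t\sum_i\lambda_i(\Phi_s(x))\,\dd s\bigr)$. It compares $\sum_i\lambda_i(\Phi_s(x))$ with $\sum_i\lambda_i(x^*)$ up to $C_1|\Phi_s(x)-x^*|$, using the $C^1$ eigenvalue branches of Corollary~\ref{cordiff}. It then uses the exponential rate \eqref{eq:Phi_exp_conv}, made uniform through the bounds on $H$ and $DH^{-1}$, to get $\sup_x\int_0^\infty|\Phi_s(x)-x^*|\,\dd s<\infty$. This gives $\det\partial_x\Phi_t(x)\in\bigl[e^{-t\sum_i\lambda_i(x^*)-\zeta},e^{-t\sum_i\lambda_i(x^*)+\zeta}\bigr]$.

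You instead differentiate the conjugacy and get the exact identity $\det D\Phi_t(y)=e^{-t\,\tr\nabla^2f(x^*)}\det DH(y)/\det DH(\Phi_t(y))$. The bounded distortion is then immediate from $\det DH$ being close to $1$ on a forward-invariant ball.

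Your route is shorter and needs neither a convergence rate nor the regularity of the $\lambda_i$. It only needs the $C^1$ diffeomorphism $H$ to be defined along entire forward orbits, and it transfers verbatim to the discrete setting via \eqref{eq:conjugaison_cas_discret}. The paper's route keeps the Jacobian estimate intrinsic to the flow. It only needs a uniformly integrable decay of $|\Phi_t(x)-x^*|$, which under Assumption~\ref{ass:min} follows even from the uniform positivity of $\nabla^2 f$ on the ball. So in the paper the conjugacy is genuinely needed only for the almost-everywhere eventual entry into the valley.

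Two minor points: your two-radius device for forward invariance can be replaced by Lemma~\ref{lem_Step_1}, and $\Phi_t(S)\cap\vall_{f,w}(x^*)=\Phi_t(S\cap G_t)$ actually holds with equality.
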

\begin{figure}[ht]
\includegraphics[width=\linewidth]{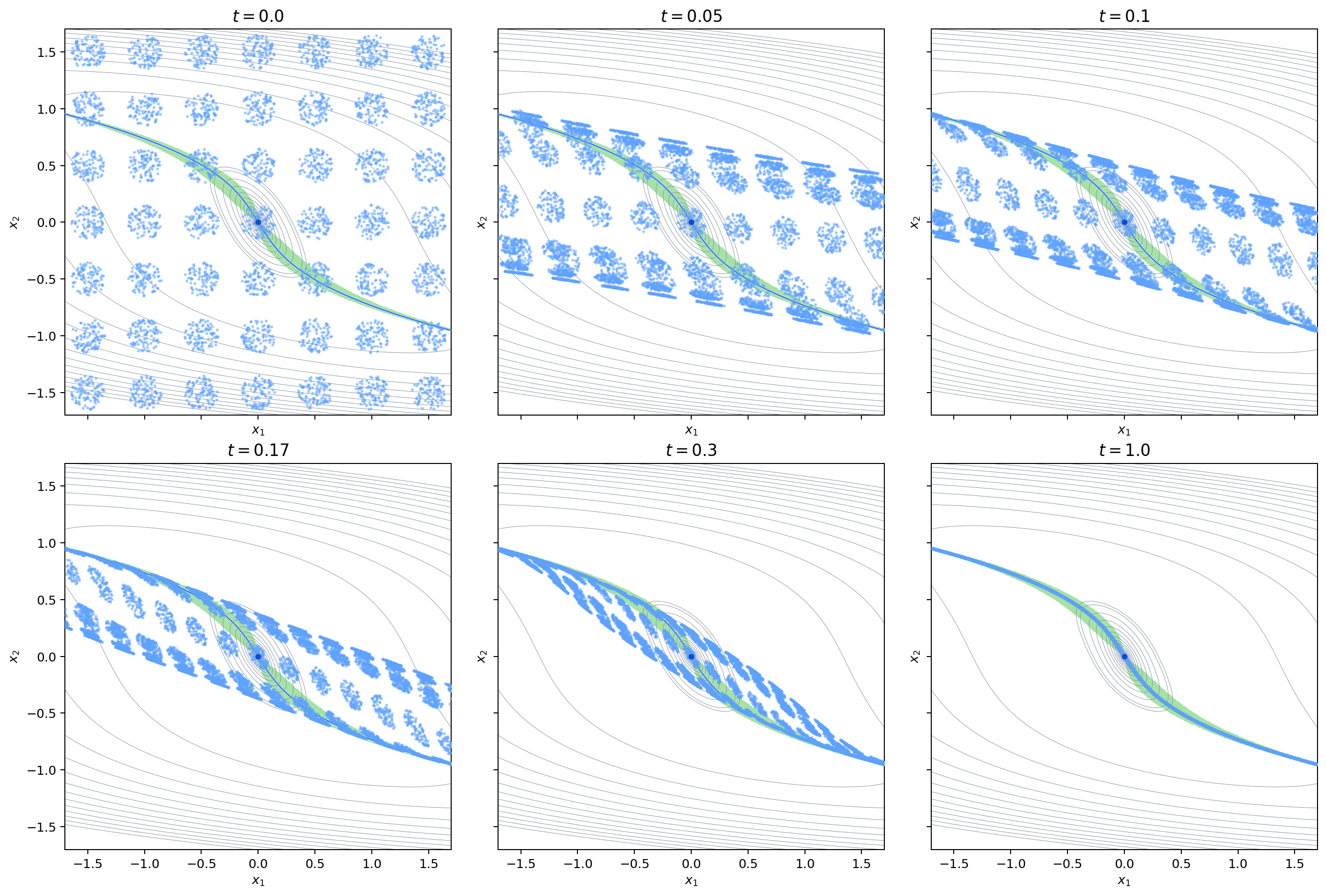}
\caption{Uniformly distributed balls form the set  $S$ on $[-5,5]^2$ of \Cref{rapport_de_volumes}. They are randomly sampled and deformed by the gradient flow.  This illustrates the concentration phenomenon of formula \eqref{eq:rapport_de_volumes} and provides some mathematical insights into the ``formation of rivers".}
\end{figure}

\begin{proof*} 
For every $t\in[0,\infty)$, $\Phi_t$ is a local diffeomorphism and, for every $x\in\RR^{d}$, setting $Z(t,x)=\det  \partial_x \Phi_t(x)$,  Jacobi's formula implies   
$$\ba{rcl}
\ds \partial_t Z(t,x)&=& - \tr\left( \nabla^2f \left(\Phi_t(x)\right) \right)Z(t,x)
=- \ds \left(\sum_{i=1}^{d} \lambda_i \left(\Phi_t(x)\right)\right)Z(t,x),\\[10pt]
Z(0,x)&=& 1.
\ea
$$
In turn, 
$$
\det  \partial_x \Phi_t(x)= \exp\left(\ds -\int_{0}^{t} \sum_{i=1}^{d} \lambda_i \left(\Phi_s(x)\right) \dd s \right),\quad\text{for all }(t,x)\in [0,\infty)\times\RR^{d}.
$$
On the other hand, it follows from Theorem~\ref{HG_cas_continu}, Lemma~\ref{lem_Step_1}, and Corollary~\ref{cordiff} in the Appendix, one can shrink $\eta$ to obtain a local diffeomorphism $H\colon B(x^*,\eta)\to\RR^{d}$ such that $H(x^*)=0$, $DH(x^*)=I_{d}$, $\Phi_t(B(x^*,\eta )) \subset B(x^*,\eta ),$ for all $t\in [0,\infty)$, the conjugacy relation~\eqref{conjugaison_cas_continu} holds, and
\begin{equation}
\sup_{x\in B(x^*,\eta )}|H(x)|+\max_{i=1,\hdots,d}\sup_{x\in B(x^*,\eta )}|\nabla\lambda_{i}(x)|+\sup_{y\in N}\|D H^{-1}(y)\|<\infty,  
\label{eq:preliminary_bounds}
\end{equation}
where $N$ is a closed convex neighborhood containing  $H(B(x^*,\eta ))$ and on which $DH^{-1}$ is well defined. In particular, by the mean value theorem there exists $C_1>0$ such that, for every $x\in B(x^*,\eta )$ and $t\in [0,\infty)$,
\begin{multline*}
\exp\left(-t\sum_{i=1}^{d}\lambda_{i}(x^*)-C_{1}\int_{0}^{t}|\Phi_s(x)-x^*|\dd s\right)\leq\det  \partial_x \Phi_t(x)\\
\leq \exp\left(-t\sum_{i=1}^{d}\lambda_{i}(x^*)+ C_{1}\int_{0}^{t}|\Phi_s(x)-x^*|\dd s\right).
\end{multline*} 
Using~\eqref{eq:Phi_exp_conv} and~\eqref{eq:preliminary_bounds}, we get $\sup_{x\in B(x^*,\eta )} \int_{0}^{\infty}|\Phi_s(x)-x^*| \dd s<\infty$
and, hence, there exists $\zeta>0$ such that, for every $x\in B(x^*,\eta )$ and $t\in(0,\infty)$, 
\begin{equation}
\label{estimate_jacobian_continuous_case}
\exp\left(-t\sum_{i=1}^{d}\lambda_{i}(x^*)-\zeta\right)\leq\det  \partial_x \Phi_t(x)\leq \exp\left(-t\sum_{i=1}^{d}\lambda_{i}(x^*)+\zeta\right).
\end{equation}

Finally, let $S\subset B(x^*,\eta )$ be a measurable set, with $\vol(S)>0$, and $w>0$. Since, for every $t\in[0,\infty)$, $\Phi_t$ is a local diffeomorphism, we have $\vol\left(\Phi_t(S)\right)>0$ and 
\begin{equation}
 \frac{\vol\left(\Phi_t(S) \cap   \vall_{f,w}(x^*)\right)}{\vol\left(\Phi_t(S)\right)}= 1- R(t),\quad\text{for all }t\in [0,\infty),
\label{eq:rapport_de_volumes_with_error}
\end{equation}
 where, by the change of variable formula,
$$
R(t):=\frac{\vol\left(\Phi_t(S) \setminus\vall_{f,w}(x^*)\right)}{\vol\left(\Phi_t(S)\right)}
% =\frac{\int_{\Phi_t(S) \setminus\vall_{f,w}(x^*)}\dd x}{\int_{\Phi_t(S)}\dd x}
=\frac{\displaystyle\int_{S\setminus\Phi_t^{-1}( \vall_{f,w}(x^*))}\det\left( \partial_x \Phi_t(x)\right)\dd x}{\int_{S}\det\left( \partial_x \Phi_t(x)\right)\dd x}.
$$
Estimate~\eqref{estimate_jacobian_continuous_case} yields
$$
R(t)\leq \exp(2\zeta)\frac{\vol\left(S \setminus\Phi_t^{-1}(\vall_{f,w}(x^*))\right)}{\vol(S)},\quad\text{for all }t\in(0,\infty).
$$
In turn, it follows from Corollary~\ref{on_va_vers_V_eps_bis}, $\vol(B(x^*,\eta )\setminus {\mathcal S}_{d}^{H})=0$, and dominated convergence that $R(t)\to 0$, as $t\to\infty$, and, hence,~\eqref{eq:rapport_de_volumes} follows from~\eqref{eq:rapport_de_volumes_with_error}.
\end{proof*}

%%%%%%%%%%%%%%%%%%%%%%%%%%%%%%%%%%%%%%%%%%%%%%%%%%%%%%%
\subsection{Directional convergence for saddle points} 
The previous section was centered on the behavior of the flow for the convergence toward local minimizers; the same analysis may be applied, in a Riemannian fashion, to any generic saddle point \(x^*\), i.e., for which \(\lambda_1(x^*)\lambda_d(x^*)<0\), after restricting \(f\) to its local stable manifold \(W^s_{x^*}\); see \cite{MR4423370} for theoretical elements. 

Indeed, if we endow \(W^s_{x^*}\) with the induced Euclidean metric of \(\RR^d\), then \(W^s_{x^*}\) is a smooth Riemannian manifold and the restricted function \(\hat f:=f_{|W^s_{x^*}}\) has \(x^*\) as a strong local minimizer. Moreover, the Riemannian gradient flow of \(\hat f\) is exactly the restriction of the ambient gradient flow to \(W^s_{x^*}\). Hence the notions introduced in Section~\ref{preliminaries}, gradient extremals, talweg, and valleys, are Riemannian objects that are well defined on $W^s_{x^*}, \langle\cdot,\cdot\rangle$, and the alignment and concentration results of this section may be developed in this non-Euclidean setting.

\medskip

Recent results also show that maxima, minima, and alignment phenomena share interesting connections. For this, we also refer the reader to~\cite{Belabbas_23}, where, using Hartman’s theorem as in the proof of Theorem~\ref{HG_cas_continu}, it is shown that if \(x^*\) is a nondegenerate local maximum, then, generically, one can associate to \(x^*\) two local minimizers, \(m_{-}(x^*)\) and \(m_{+}(x^*)\), such that gradient trajectories starting near \(x^*\) converge, with high probability, to one of these two minima.

%%%%%%%%%%%%%%%%%%%%%%%%%%%%%%%%%%%%%%%%%%%%%%%%%%%%%%%
\section{Directional convergence for the gradient method}
\label{sec:discret}

As convergence of the gradient method generically occurs to a local minimizer \cite{lee2016gradient},  we work throughout this section under \Cref{ass:min} with $x^*$ being a local minimizer. We shall systematically use the constants  $L_{1}, L_2>0$ defined through 
\begin{equation}
L_{2}:=\sup_{x\in B(0,\eta)}\|\nabla^{2}f(x)\|<\infty,
\label{eq:bounded_hessian}
\end{equation}
and 
\be
\langle \nabla^{2}f(x)h,h\rangle\geq L_{1}|h|^{2},
\label{eq:coercivity}
\ee
for any $x\in B(x^*,\eta)$ and $h\in\RR^{d}.$

Given $\gamma>0$, we consider the constant step gradient method
\begin{equation}
x\in B(x^*,\eta),\;\;x_0:=x,\; x_{k+1}=x_{k}-\gamma\nabla f(x_{k}),
\label{def:steepest_descent_iterates}
\end{equation}

for any $k\geq0.$ Given $k\in\NN$ and $x\in B(x^*,\eta),$ we denote by $\Phi_{k}(x)$ the $k^{\mathrm th}$ iterate in \eqref{def:steepest_descent_iterates}.
 
We will need the following standard result on the discrete flow $\Phi_k$. For the sake of completeness, we provide its short proof.

\begin{lemma}[Gradient descent stability for strong minimizers]
Suppose that $\gamma\in (0,2/L_{2})$ and define $s(\gamma)\in (0,1)$ by
$$
s(\gamma)=\max\{|1-\gamma L_{1}|,|1-\gamma L_{2}|\}=\begin{cases}  1-L_1\gamma, &\text{if }\gamma\in \left(0,\frac{2}{L_1+L_2}\right],\\[6pt]
                           L_2\gamma-1, &\text{if }\gamma\in \left(\frac{2}{L_1+L_2},\frac{2}{L_2}\right).
\end{cases}
$$
Then $\Phi_1$ is $s(\gamma)$-Lipschitz on $B(x^*,\eta)$ and
\begin{equation}
(\forall\,k\geq 0)\quad|\Phi_{k+1}(x)-x^*|\leq s(\gamma)|\Phi_{k}(x)-x^*|.
\label{eq:stability_discrete_flow}
\end{equation}
In particular, $\Phi_{k}(B(x^*,\eta)) \subset B(x^*,\eta)$ for all $k\in\NN$.
\label{lem:stability_discrete_flow}
\end{lemma}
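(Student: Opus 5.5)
The plan is to write $\Phi_1(x)=x-\gamma\nabla f(x)$ and to control its derivative $D\Phi_1(x)=I_d-\gamma\nabla^2 f(x)$ uniformly on $B(x^*,\eta)$. By \eqref{eq:bounded_hessian} and \eqref{eq:coercivity}, for every $x\in B(x^*,\eta)$ the symmetric matrix $\nabla^2 f(x)$ has spectrum contained in $[L_1,L_2]$. Hence $I_d-\gamma\nabla^2 f(x)$ is symmetric with spectrum in $[1-\gamma L_2,1-\gamma L_1]$, and its operator norm is therefore at most $\max\{|1-\gamma L_1|,|1-\gamma L_2|\}=s(\gamma)$.

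For $\gamma\in(0,2/L_2)$ we have $1-\gamma L_2>-1$, and, since $L_1\le L_2$, also $0<\gamma L_1<2$. Thus $s(\gamma)<1$. Positivity follows from $L_1<L_2$ (or, in the degenerate case $\gamma=1/L_1=1/L_2$, one simply accepts $s=0$). The piecewise formula comes from comparing $1-\gamma L_1$ with $\gamma L_2-1$: the crossover is at $\gamma=2/(L_1+L_2)$.

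Next comes the Lipschitz estimate. The ball $B(x^*,\eta)$ is convex, so for $x,y$ in it the mean value theorem in integral form gives
\[
\Phi_1(x)-\Phi_1(y)=\int_0^1 D\Phi_1\bigl(y+\tau(x-y)\bigr)(x-y)\,\dd\tau ,
\]
which yields $|\Phi_1(x)-\Phi_1(y)|\le s(\gamma)|x-y|$. Since $\nabla f(x^*)=0$, the point $x^*$ is fixed by $\Phi_1$. Taking $y=x^*$ then gives $|\Phi_1(x)-x^*|\le s(\gamma)|x-x^*|<\eta$, so $\Phi_1(B(x^*,\eta))\subset B(x^*,\eta)$.

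Finally, a straightforward induction on $k$ shows that all iterates remain in $B(x^*,\eta)$. Applying the Lipschitz bound to $\Phi_k(x)$ then gives \eqref{eq:stability_discrete_flow}.

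The only delicate point is bookkeeping rather than mathematics. The ball in \eqref{eq:bounded_hessian} is written as $B(0,\eta)$ and should be read as $B(x^*,\eta)$. We must also make sure that the segment between two points stays inside the region where the Hessian bounds hold; convexity of the ball settles this. Invariance must be established before iterating, which the induction order handles.
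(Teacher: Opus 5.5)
Your proof is correct and follows the same route as the paper. You bound the spectrum of $D\Phi_1(x)=I_d-\gamma\nabla^2 f(x)$ inside $[-s(\gamma),s(\gamma)]$ using \eqref{eq:bounded_hessian}--\eqref{eq:coercivity}, deduce that $\Phi_1$ is $s(\gamma)$-Lipschitz, and apply this with the fixed point $x^*$; you also spell out the convexity of the ball and the induction that keeps the iterates in $B(x^*,\eta)$, both of which the paper leaves implicit.
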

\begin{proof*}
It follows from ~\eqref{eq:bounded_hessian} and~\eqref{eq:coercivity} that, for every $x\in B(x^*,\eta)$, the spectrum of the symmetric matrix $D\Phi_{1}(x)=I_{d}-\gamma\nabla^{2}f(x)$ is contained in $[-s(\gamma),s(\gamma)]$, which implies that $\Phi_1$ is $s(\gamma)$-Lipschitz on $B(x^*,\eta)$. In particular, for every $x\in B(x^*,\eta)$,
$$
|\Phi_{k+1}(x)-x^*|=|\Phi_{1}(\Phi_{k}(x))-\Phi_{1}(x^*)|\leq s(\gamma)|\Phi_{k}(x)-x^*|,
$$
from which~\eqref{eq:stability_discrete_flow} follows. 
\end{proof*}

\medskip

For every $k\in\NN$ and $y\in\RR^{d}$, denote by $\Psi_{k}(y)$ the $k^{\mathrm th}$ iterate of the sequence
\begin{equation}
y_{0}=y,\quad y_{k+1}=y_{k}-\gamma\nabla^{2}f(x^*) y_{k},
\label{def:steepest_descent_iterates_quadratic_case}
\end{equation}
i.e. $y_k=(I_{d}-\gamma\nabla^2f(x^*))^{k}y_0$ for all $k\in\NN$. Recall that the manifolds ${\mathcal S}_{i}$ ($i=0,\hdots,d$) are defined in~\eqref{lem:partition}. Since $\nabla^{2}f(x^*)=P(x^*)\mathscr{D}(x^*)P^{\top}(x^*)$, for every $i=1,\hdots,d$ and $y=\sum_{j=d-i+1}^d \alpha_j v_j(x^*)\in {\mathcal S}_{i}$, we have for all $k\geq 0$, 
\begin{equation}
\Psi_{k}(y)=\sum_{j=d-i+1}^{d}(1-\gamma\lambda_{j}(x^*))^{k}\alpha_{j}v_{j}(x^*)
\label{eq:quadratic_steepest_descent_iterates}
\end{equation}
and hence, if $\gamma\in(0,2/\lambda_{d}(x^*))$, one  has 
\begin{equation}
\left|\Psi_{k}(y)\right|\leq \kappa_{i}(\gamma)^{k}|y|,\quad\text{for all }k\geq 0,
\label{eq:borne_Psi_k_cas_discret}
\end{equation}
where $\kappa_{i}(\gamma)\in(0,1)$ is given by
\begin{equation}
\kappa_{i}(\gamma)=\begin{cases}
1-\gamma\lambda_{d-i+1}(x^*),&\text{if }\gamma\in\left(0,\frac{2}{\lambda_{d-i+1}(x^*)+\lambda_{d}(x^*)}\right],\\
\gamma\lambda_{d}(x^*)-1,&\text{if }\gamma\in\left(\frac{2}{\lambda_{d-i+1}(x^*)+\lambda_{d}(x^*)},\frac{2}{\lambda_{d}(x^*)}\right).
\end{cases}
\label{eq:def_kappa_gamma}
\end{equation}

The following result provides the asymptotic directional behavior of the discrete flow $\Psi$.

\begin{lemma}
Let $i=2,\hdots,d$, let $y=\sum_{j=d-i+1}^d \alpha_j v_j(x^*)\in {\mathcal S}_{i}$, and take $\gamma\in \left(0,\frac{2}{\lambda_{d-i+1}(x^*)+\lambda_{d}(x^*)}\right)$. Then, for every $k\geq 0$, we have
\begin{align}
\left|\rho\left(\Psi_{k}(y)\right)-\text{{\rm sign}}(\alpha_{d-i+1})v_{d-i+1}(x^*)\right|&\leq \frac{2|y|}{|\alpha_{d-i+1}|}\kappa_{{\rm dir},i}(\gamma)^{k},
\label{eq:estimate_secant_Psi_discrete}\\
\left|\rho\left(\frac{\Psi_{k+1}(y)-\Psi_{k}(y)}{\gamma}\right)+\text{{\rm sign}}(\alpha_{d-i+1})v_{d-i+1}(x^*)\right|&\leq  \frac{2|y|}{|\alpha_{d-i+1}|}\left(\frac{\lambda_{d}(x^*)}{\lambda_{d-i+1}(x^*)}\right)\kappa_{{\rm dir},i}(\gamma)^{k},
\label{eq:estimate_derivative_Psi_discrete}
\end{align}
where $\kappa_{{\rm dir},i}(\gamma)\in(0,1)$ is given by 
\begin{equation}
\kappa_{{\rm dir},i}(\gamma)=\begin{cases}
\frac{1-\gamma\lambda_{d-i+2}(x^*)}{1-\gamma\lambda_{d-i+1}(x^*)}, &\text{if }\gamma\in \left(0,\frac{2}{\lambda_{d-i+2}(x^*)+\lambda_{d}(x^*)}\right],\\[10pt]
\frac{\gamma\lambda_{d}(x^*)-1}{1-\gamma\lambda_{d-i+1}(x^*)}, &\text{if }\gamma\in \left(\frac{2}{\lambda_{d-i+2}(x^*)+\lambda_{d}(x^*)},\frac{2}{\lambda_{d-i+1}(x^*)+\lambda_{d}(x^*)}\right).
\end{cases}
\label{def:kappa_dir}
\end{equation}
\label{lem:directional_behavior_linear_flow_discrete}
\end{lemma}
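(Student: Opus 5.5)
The plan is to mimic the proof of Lemma~\ref{lem:directional_behavior_linear_flow}, replacing the exponentials $\exp(-\lambda_j(x^*)t)$ by the powers $\mu_j^k$, where $\mu_j:=1-\gamma\lambda_j(x^*)$. Write $m:=d-i+1$.

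\textbf{Step 1 (positivity of the dominant factor).} Because $\gamma<\frac{2}{\lambda_m(x^*)+\lambda_d(x^*)}<\frac{1}{\lambda_m(x^*)}$, we have $\mu_m>0$. Hence $\rho(\Psi_k(y))=\rho(\mu_m^{-k}|\alpha_m|^{-1}\Psi_k(y))$. Expanding with \eqref{eq:quadratic_steepest_descent_iterates} gives
\[
\mu_m^{-k}|\alpha_m|^{-1}\Psi_k(y)=\mathrm{sign}(\alpha_m)v_m(x^*)+|\alpha_m|^{-1}\sum_{j=m+1}^d (\mu_j/\mu_m)^k\alpha_j v_j(x^*).
\]
Apply \eqref{eq:inegalite_basique_normalisee} with $a=\mathrm{sign}(\alpha_m)v_m(x^*)$, which has $|a|=1$, and with $b$ equal to the remainder. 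By orthonormality, $|b|\le |\alpha_m|^{-1}\max_{j>m}|\mu_j/\mu_m|^k\,|y|$. This yields \eqref{eq:estimate_secant_Psi_discrete}, provided $\max_{j>m}|\mu_j|/\mu_m=\kappa_{{\rm dir},i}(\gamma)<1$.

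\textbf{Step 2 (the ratio; this is the main computational point).} For $j\ge m+1$, the quantity $\mu_j$ ranges between $\mu_{m+1}$ and $\mu_d$, and $\mu_j$ is decreasing in $j$. So $\max_{j>m}|\mu_j|=\max\{|\mu_{m+1}|,|\mu_d|\}$.

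There are two regimes.
\begin{itemize}
\item If $\gamma\le \frac{2}{\lambda_{m+1}(x^*)+\lambda_d(x^*)}$, then $\mu_{m+1}\ge -\mu_d$. Also $\mu_{m+1}>0$, so the maximum is $1-\gamma\lambda_{m+1}(x^*)$. This gives the first branch of \eqref{def:kappa_dir}, which is clearly in $(0,1)$ since $\lambda_{m+1}>\lambda_m$.
\item Otherwise the maximum is $\gamma\lambda_d(x^*)-1>0$. It remains to check $\gamma\lambda_d-1<1-\gamma\lambda_m$. This is equivalent to $\gamma<2/(\lambda_m+\lambda_d)$, which is exactly the hypothesis.
\end{itemize}
(Positivity of $\kappa_{{\rm dir},i}$ holds in the second branch by definition; in the first it follows from $\gamma\lambda_{m+1}<2\lambda_{m+1}/(\lambda_{m+1}+\lambda_d)\le 1$.)

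\textbf{Step 3 (increments).} Write
\[
\frac{\Psi_{k+1}(y)-\Psi_k(y)}{\gamma}=-\nabla^2 f(x^*)\Psi_k(y)=-\sum_{j\ge m}\lambda_j(x^*)\mu_j^k\alpha_j v_j(x^*).
\]
Normalize by $\lambda_m(x^*)\mu_m^k|\alpha_m|>0$ and factor out the minus sign. Applying \eqref{eq:inegalite_basique_normalisee} again, the remainder is bounded by
\[
|\alpha_m|^{-1}\frac{\lambda_d(x^*)}{\lambda_m(x^*)}\,\kappa_{{\rm dir},i}(\gamma)^k|y|,
\]
using $\lambda_j\le\lambda_d$ and Step 2. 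This gives \eqref{eq:estimate_derivative_Psi_discrete}.

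The only genuinely delicate point is Step 2: one must handle possible negative $\mu_j$ (oscillation when $\gamma\lambda_d>1$) and verify that the step-size restriction guarantees both $\mu_m>0$ and $\kappa_{{\rm dir},i}<1$.
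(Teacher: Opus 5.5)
Your proposal is correct and follows essentially the paper's route: rescale by the positive dominant factor $(1-\gamma\lambda_{d-i+1}(x^*))^{-k}$ (times $\lambda_{d-i+1}(x^*)^{-1}$ for the increments), apply \eqref{eq:inegalite_basique_normalisee} using orthogonality, and identify $\kappa_{{\rm dir},i}(\gamma)$ with $\max_{j>d-i+1}|1-\gamma\lambda_j(x^*)|/(1-\gamma\lambda_{d-i+1}(x^*))$, which you check explicitly whereas the paper only asserts it. The only slip is in your positivity remark: in the first regime $\gamma\lambda_{m+1}(x^*)\le 2\lambda_{m+1}(x^*)/(\lambda_{m+1}(x^*)+\lambda_d(x^*))$ is not strict, so for $i=2$ and $\gamma=1/\lambda_d(x^*)$ one gets $\kappa_{{\rm dir},i}(\gamma)=0$; the statement's claim $\kappa_{{\rm dir},i}\in(0,1)$ also misses this edge case, and it does not affect the two estimates.
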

\begin{proof*}
We only show~\eqref{eq:estimate_derivative_Psi_discrete} as~\eqref{eq:estimate_secant_Psi_discrete} follows from similar computations. By ~\eqref{eq:quadratic_steepest_descent_iterates}, we have  
\begin{multline}
\frac{\Psi_{k+1}(y)-\Psi_{k}(y)}{\gamma}=-\alpha_{d-i+1}\lambda_{d-i+1}(x^*)(1-\gamma\lambda_{d-i+1}(x^*))^{k}v_{d-i+1}(x^*)\\
-\sum_{j=d-i+2}^{d}\alpha_{j}\lambda_{j}(x^*)(1-\gamma\lambda_{j}(x^*))^{k}v_{j}(x^*).
\label{eq:calcul_difference_psi}
\end{multline}
On the other hand, since $\lambda_{d-i+1}(x^*)>0$ and $1-\gamma\lambda_{d-i+1}(x^*)>0$, we have 
$$
\rho\left(\frac{\Psi_{k+1}(y)-\Psi_{k}(y)}{\gamma}\right)=\rho\left(\lambda_{d-i+1}(x^*)^{-1}(1-\gamma\lambda_{d-i+1}(x^*))^{-k}\frac{\Psi_{k+1}(y)-\Psi_{k}(y)}{\gamma}\right)
$$
and, hence,~\eqref{eq:inegalite_basique_normalisee} yields
\begin{equation}
\begin{split}
& \; \left|\rho\left(\frac{\Psi_{k+1}(y)-\Psi_{k}(y)}{\gamma}\right)+\text{{\rm sign}}(\alpha_{d-i+1})v_{d-i+1}(x^*)\right|\\
\leq & \; \left|\rho\left(\frac{\Psi_{k+1}(y)-\Psi_{k}(y)}{\gamma}\right)-\rho(-\alpha_{d-i+1}v_{d-i+1}(x^*))\right|\\
\leq & \; \frac{2}{|\alpha_{d-i+1}|}\left|\lambda_{d-i+1}(x^*)^{-1}(1-\gamma\lambda_{d-i+1}(x^*))^{-k}\frac{\Psi_{k+1}(y)-\Psi_{k}(y)}{\gamma}+\alpha_{d-i+1}v_{d-i+1}(x^*)\right|.
\label{eq:est_rho_difference_phi_k_discrete_2}
\end{split}
\end{equation}
Combining with~\eqref{eq:calcul_difference_psi} and noting that 
$$
\kappa_{{\rm dir},i}(\gamma)=\max\left\{\left|\frac{1-\gamma\lambda_{j}(x^*)}{1-\gamma\lambda_{d-i+1}(x^*)}\right|\;:\;j=d-i+2,\hdots,d\right\},$$
we get 
\small
\begin{equation}
\begin{split}
&\;\left|\rho\left(\frac{\Psi_{k+1}(y)-\Psi_{k}(y)}{\gamma}\right)+\text{{\rm sign}}(\alpha_{d-i+1})v_{d-i+1}(x^*)\right|\\
 \leq &\frac{2}{|\alpha_{d-i+1}|}\left|\sum_{j=d-i+2}^{d}\alpha_{j}\frac{\lambda_{j}(x^*)}{\lambda_{d-i+1}(x^*)}\left(\frac{1-\gamma\lambda_{j}(x^*)}{1-\gamma\lambda_{d-i+1}(x^*)}\right)^{k}v_{j}(x^*)\right|\\
 \leq &\frac{2|y|}{|\alpha_{d-i+1}|}\left(\frac{\lambda_{d}(x^*)}{\lambda_{d-i+1}(x^*)}\right)\kappa_{{\rm dir},i}(\gamma)^{k}.
\end{split} 
\end{equation}
\normalsize
This ends the proof. 
\end{proof*}

\begin{remark}[Link with the power method] One can easily recover estimate~\eqref{eq:estimate_secant_Psi_discrete} by interpreting the sequence of directions $(\rho(\Psi_{k}(y)))_{k\in\NN}$ as the iterates of the power method to approximate $v_{d-i+1}(x^*)$ (see, e.g.,~\cite[Theorem 10.3.1]{MR2365296}). 
\end{remark}
The following result deals with the asymptotic directional behavior of the discrete flow $\Phi$.

\begin{theorem}[Directional convergence of the gradient method]
\label{HG_cas_discret} 
Under  Assumptions \ref{ass:H} and \ref{ass:min}:\\
(i) {\rm ($C^1$ conjugacy)} For $\eta>0$ small enough  there exists a local $C^1$-diffeomorphism $H\colon B(x^*,\eta)\to \RR^{d}$ satisfying $H(x^*)=0$ and $DH(x^*)=I_{d}$, such that, if $\gamma\in (0,2/\lambda_{d}(x^*))$,
\be
H(\Phi_{k}(x))= \Psi_{k}(H(x))\;\text{ for all }x\text{ in }B(x^*,\eta) \text{ and } k\geq 0.
\label{eq:conjugaison_cas_discret}
\ee
(ii) {\rm (Partition into stable manifolds)} 
Setting ${\mathcal S}_i^{H}=H^{-1}({\mathcal S}_i)$ for $i=0,\hdots,d$, the family $\{{\mathcal S}_i^{H}\}_{i=0}^d$ forms a partition of $B(x^*,\eta)$ into  manifolds such that, for all $i=0,\hdots,d$, $\dim {\mathcal S}_i^{H}=i$, and for all $k\ge 0$,
\[
\Phi_k({\mathcal S}_i^{H})\subset {\mathcal S}_i^{H}\quad \text{whenever } \,\gamma\neq \frac{1}{\lambda_{d-i+1}(x^*)}.
\]
(iii) {\rm (Convergence and directional alignment)} There exists $c>0$ such that, for all $x\in {\mathcal S}_i^{H}$ and $k\ge 0$,
\begin{equation}
\left|\Phi_k(x)-x^*\right|\le c\,|H(x)|\,\kappa_i(\gamma)^k,
\label{eq:Phi_exp_conv_discret}
\end{equation}
where $\kappa_i$ is defined in~\eqref{eq:def_kappa_gamma}. In addition, if $\gamma\in(0,2/(\lambda_{d-i+1}(x^*)+\lambda_{d}(x^*)))$, then, for every $x\in {\mathcal S}_{i}^{H}$, 
\be
\lim_{k\to\infty}\rho\left(\Phi_{k}(x)-x^*\right)=-\lim_{k\to \infty} \rho\left(\Phi_{k+1}(x)-\Phi_{k}(x)\right)
 \in \{-v_{d-i+1}(x^*), v_{d-i+1}(x^*)\}.
\label{eq:egalite_de_limites_cas_discret}
\ee
\end{theorem}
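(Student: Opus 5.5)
The plan is to mirror the proof of Theorem~\ref{HG_cas_continu}, replacing Sternberg--Hartman conjugacy for flows by Hartman's $C^1$ linearization theorem for contractions. For (i), I would apply \cite[Theorem (I)]{MR141856} to the map $\Phi_1(x)=x-\gamma\nabla f(x)$. It is $C^2$ because $f$ is $C^3$, it fixes $x^*$, and $D\Phi_1(x^*)=I_d-\gamma\nabla^2f(x^*)$. When $\gamma\in(0,2/\lambda_d(x^*))$, all eigenvalues $1-\gamma\lambda_j(x^*)$ of this matrix lie in $(-1,1)$, so $x^*$ is an attracting hyperbolic fixed point. If some eigenvalue vanishes, i.e.\ $\gamma=1/\lambda_j(x^*)$, the linear part is not invertible. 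This is the delicate case, and I would treat it with Hartman's result for contractions, which allows singular linear parts in the $C^1$ statement, or else restrict to the invertible case as the statement (ii) implicitly does. Hartman's theorem gives a $C^1$ map $h$ with $h(x^*)=0$, $Dh(x^*)=I_d$ and $h\circ\Phi_1=\Psi_1\circ h$. Setting $H=h$, the inverse function theorem makes $H$ a local diffeomorphism. Shrinking $\eta$ and using Lemma~\ref{lem:stability_discrete_flow} (note $L_2$ can be taken close to $\lambda_d(x^*)$ once $\eta$ is small), we get $\Phi_k(B(x^*,\eta))\subset B(x^*,\eta)$, so induction on $k$ yields \eqref{eq:conjugaison_cas_discret}.

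For (ii), note that $\{\mathcal S_i\}$ is a partition of $\RR^d$ into $i$-dimensional manifolds, so its preimage under the diffeomorphism $H$ partitions $B(x^*,\eta)$ into $i$-dimensional manifolds. Invariance comes from \eqref{eq:quadratic_steepest_descent_iterates}. The coefficient of $v_{d-i+1}(x^*)$ in $\Psi_k(y)$ is $(1-\gamma\lambda_{d-i+1})^k\alpha_{d-i+1}$, and it stays nonzero precisely when $\gamma\neq1/\lambda_{d-i+1}(x^*)$. Coefficients with lower index remain zero. Hence $\Psi_k(\mathcal S_i)\subset\mathcal S_i$, and the conjugacy transfers this to $\Phi_k(\mathcal S_i^H)\subset\mathcal S_i^H$.

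For (iii), I would write $\Phi_k(x)-x^*=H^{-1}(\Psi_k(H(x)))-H^{-1}(0)$. The mean value theorem on a convex neighborhood, with $\|DH^{-1}\|$ bounded, together with \eqref{eq:borne_Psi_k_cas_discret}, gives \eqref{eq:Phi_exp_conv_discret}. For the directions, I would assume $\gamma<2/(\lambda_{d-i+1}+\lambda_d)$, so that $1-\gamma\lambda_{d-i+1}>0$, and set $\beta_k=(1-\gamma\lambda_{d-i+1}(x^*))^{-k}$. The decomposition \eqref{eq:triangular_inequality_00} is then repeated with $\beta_k$ in place of $e^{\lambda_{d-i+1}t}$. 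From \eqref{eq:inegalite_basique_normalisee} and $|\beta_k\Psi_k(H(x))|\ge|\alpha_{d-i+1}|$,
\[
|\rho(\Phi_k(x)-x^*)-\rho(\Psi_k(H(x)))|\le\frac{2\beta_k}{|\alpha_{d-i+1}|}\Big|\int_0^1\big(DH^{-1}(\tau\Psi_k(H(x)))-DH^{-1}(0)\big)\dd\tau\,\Psi_k(H(x))\Big|.
\]
Since $\kappa_i(\gamma)=1-\gamma\lambda_{d-i+1}$ in this range, $\beta_k|\Psi_k(H(x))|\le|H(x)|$. Continuity of $DH^{-1}$ at $0$ then sends the bound to $0$. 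Combined with \eqref{eq:estimate_secant_Psi_discrete}, using $\kappa_{{\rm dir},i}<1$, this gives the secant limit.

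For the increments, the conjugacy gives $\Phi_{k+1}(x)-\Phi_k(x)=H^{-1}(\Psi_{k+1}(H(x)))-H^{-1}(\Psi_k(H(x)))$. This equals $\int_0^1DH^{-1}(\Psi_k+\tau(\Psi_{k+1}-\Psi_k))\dd\tau\,(\Psi_{k+1}-\Psi_k)$. Its deviation from $\Psi_{k+1}-\Psi_k$ is therefore $o(|\Psi_{k+1}-\Psi_k|)$. By \eqref{eq:calcul_difference_psi}, $\beta_k|\Psi_{k+1}-\Psi_k|/\gamma\ge\lambda_{d-i+1}|\alpha_{d-i+1}|$, so the same $\rho$-estimate applies. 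Together with \eqref{eq:estimate_derivative_Psi_discrete}, this yields the limit $-\mathrm{sign}(\alpha_{d-i+1})v_{d-i+1}(x^*)$ and hence \eqref{eq:egalite_de_limites_cas_discret}.

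The main obstacle is (i). One must invoke the correct discrete Hartman theorem, verify its hypotheses (contraction with $C^{1,1}$ nonlinearity), and handle the possibly non-invertible linear part, while ensuring that one $\eta$ works uniformly for the chosen $\gamma$.
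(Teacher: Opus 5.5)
Your treatment of (i), (ii) and of the secant limit in (iii) follows the paper's proof. The steps are: Hartman's Theorem (I) applied to $\Phi_1$; invariance of the ball from Lemma~\ref{lem:stability_discrete_flow} plus induction; transfer of $\Psi_k(\mathcal S_i)\subset\mathcal S_i$ through $H$; and the rescaled triangle inequality combined with Lemma~\ref{lem:directional_behavior_linear_flow_discrete}.

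For the increments you take a different route, and it is slightly more economical. The paper writes $(\Phi_{k+1}(x)-\Phi_k(x))/\gamma=-\nabla f(\Phi_k(x))$ and compares it with $-\nabla^2 f(x^*)\Psi_k(H(x))$. It bounds the difference by a Taylor remainder $c|\Phi_k(x)-x^*|^2$ plus $L_2$ times the secant error, and then needs \eqref{eq:Phi_exp_conv_discret} to make $\kappa_i(\gamma)^{-k}|\Phi_k(x)-x^*|^2\to 0$. You instead apply the mean value theorem to $H^{-1}$ on the segment $[\Psi_k(H(x)),\Psi_{k+1}(H(x))]$. The relative error is then bounded directly by $\sup_{\tau}\|DH^{-1}(\cdot)-I_d\|$ along that segment. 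This uses only the continuity of $DH^{-1}$ at $0$ and the fact that $\Psi_{k+1}(H(x))\neq\Psi_k(H(x))$. When $DH^{-1}$ is Lipschitz, it also yields the $\mathrm{O}((1-\gamma\lambda_1(x^*))^k)$ rate used in Theorem~\ref{th:align_discrete}. Both arguments are valid.

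What you must drop is your first option for $\gamma=1/\lambda_j(x^*)$: there is no $C^1$ linearization "allowing singular linear parts" here. Hartman's theorem concerns a nonsingular linear part, and no $C^1$ (or even topological) conjugacy can exist in general when $I_d-\gamma\nabla^2 f(x^*)$ is singular.

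Here is a counterexample. Take $f(x)=\frac12\lambda_1x_1^2+\frac12\lambda_2x_2^2+\frac14x_2^4$ with $0<\lambda_1<\lambda_2$, and $\gamma=1/\lambda_2\in(0,2/\lambda_2)$. Then $\Phi_1(x)=\big((1-\lambda_1/\lambda_2)x_1,\,-x_2^3/\lambda_2\big)$ is injective and maps small balls $B'$ around $0$ into themselves. On the other hand, $\Psi_1$ has rank one. A local diffeomorphism $H$ with $H\circ\Phi_1=\Psi_1\circ H$ would make $\Psi_1$ injective on the open set $H(B')$, which is absurd.

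So excluding $\gamma\in\{1/\lambda_j(x^*)\}_{j=1}^d$ is necessary, not optional, and your second option is the correct one. Contrary to your remark, (ii) does not impose this exclusion: for $i=d$ it only excludes $\gamma=1/\lambda_1(x^*)$. The paper's proof invokes Hartman on the whole interval $(0,2/\lambda_d(x^*))$ without comment, so it tacitly needs the same restriction.

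Finally, $H$ and $\eta$ depend on $\gamma$ through $\Phi_1$ and $\Psi_1$. There is therefore no uniformity in $\gamma$ to secure: fix $\gamma$ first, as the paper does.
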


\begin{proof*} Ideas are similar to \Cref{HG_cas_continu}. 
By~\cite[Theorem (I)]{MR141856} there exist $\eta>0$ and a local diffeomorphism $H\colon B(x^*,\eta)\to\RR^{d}$ of class $C^{1}$ such that $H(x^*)=0$, $DH(x^*)=I_{d}$, and $H(\Phi_{1}(x))= \Psi_1(H(x))$ for all $x\in B(x^*,\eta)$. Since $\gamma<2/\lambda_{d}(x^*)$ and $\lambda_{d}$ is continuous in $B(x^*,\eta)$, by reducing $\eta$, if necessary, we can assume that $\gamma<2/L_{2}$, where $L_2$ satisfies~\eqref{eq:bounded_hessian}. Thus, Lemma~\ref{lem:stability_discrete_flow} implies that, for every $x\in B(x^*,\eta)$, we have $\Phi_{k}(x)\in B(x^*,\eta)$ for all $k\geq 0$. Using that $\Phi_{k_{1}+k_{2}}=(\Phi_{k_{1}}\circ\Phi_{k_2})(x)$ for all $x\in B(x^*,\eta)$ and $k_1$, $k_2\geq 0$, we deduce that the conjugacy~\eqref{eq:conjugaison_cas_discret} of (i) holds. 

Since~\eqref{eq:quadratic_steepest_descent_iterates} implies that for every $i=1,\hdots,d$,  $k\geq 0$, and $\gamma\in (0,2/\lambda_{d}(x^*))\setminus\{1/\lambda_{d-i+1}(x^*)\}$, $\Psi_{k}({\mathcal S}_{i})\subset {\mathcal S}_{i}$, it follows from~\eqref{eq:conjugaison_cas_discret} that $\Phi_{k}({\mathcal S}_{i}^{H})\subset {\mathcal S}_{i}^{H}$. Moreover, for every $k\geq 0$ and $x\in B(x^*,\eta)$, we have $\Phi_k(x)-x^*=H^{-1}\left(\Psi_k(H(x))\right)-H^{-1}(0)$ and, hence,~\eqref{eq:Phi_exp_conv_discret} follows from the boundedness of $DH^{-1}$ and~\eqref{eq:borne_Psi_k_cas_discret}.

Let us establish (ii). In order to show~\eqref{eq:egalite_de_limites_cas_discret}, let $x\in {\mathcal S}_{i}^{H}$ and, for every $j=1,\hdots,d$, set $\alpha_{j}:=\langle H(x),v_{j}(x^*)\rangle$. Since $\gamma\in(0,2/(\lambda_{d-i+1}(x^*)+\lambda_{d}(x^*)))$, one has, by definition,  $\kappa_{i}(\gamma)=1-\gamma\lambda_{d-i+1}(x^*)$. For every $k\geq 0$, we have
\begin{equation}
\begin{split}
& \; \left|\rho\left(\Phi_{k}(x)-x^{*}\right)-\text{{\rm sign}}(\alpha_{d-i+1})v_{d-i+1}(x^*)\right|\\
\leq & \; \left|\rho\left(\Phi_{k}(x)-x^{*}\right)-\rho\left(\Psi_{k}(H(x))\right)\right|+\left|\rho\left(\Psi_{k}(H(x))\right)-\text{{\rm sign}}(\alpha_{d-i+1})v_{d-i+1}(x^*)\right|.
\label{eq:triangular_inequality_1}
\end{split}
\end{equation}
Note that~\eqref{eq:quadratic_steepest_descent_iterates} and the orthogonality of the family $\{v_{i}(x^*)\}_{i=1}^{d}$ imply that $\left|\kappa_{i}^{-k}(\gamma)\Psi_k(H(x))\right|\geq |\alpha_{d-i+1}|>0$. In turn,~\eqref{eq:inegalite_basique_normalisee} yields 
\begin{equation}
\begin{split}
& \;\left|\rho\left(\Phi_{k}(x)-x^{*}\right)-\rho\left(\Psi_{k}(H(x))\right)\right|\\
= & \; \left|\rho\left(\kappa_{i}^{-k}(\gamma)\left(\Phi_{k}(x)-x^{*}\right)\right)-\rho\left(\kappa_{i}^{-k}(\gamma)\Psi_{k}(H(x))\right)\right|\\
\leq & \; \frac{2}{|\alpha_{d-i+1}|}\left|\kappa_{i}^{-k}(\gamma)\left[\Phi_{k}(x)-x^*-\Psi_{k}(H(x))\right]\right|.
\label{eq:est_rho_phi_k_discrete_1_bis}
\end{split}
\end{equation}
Since $\Phi_k(x)-x^*=H^{-1}(\Psi_{k}(H(x)))-H^{-1}(0)$ and $DH^{-1}(0)=I_{d}$, we also have
\begin{equation}
\Phi_k(x)-x^*-\Psi_{k}(H(x))=\left(\int_{0}^{1}\left(DH^{-1}(\tau \Psi_k(H(x)))-DH^{-1}(0)\right)\dd \tau\right) \Psi_k(H(x)),
\label{eq:difference_Phi_Psi_discrete}
\end{equation}
which, together with~\eqref{eq:est_rho_phi_k_discrete_1_bis},~\eqref{eq:borne_Psi_k_cas_discret}, and the continuity of $DH^{-1}$, yields 
\begin{equation}
\begin{split}
& \; \left|\rho\left(\Phi_{k}(x)-x^{*}\right)-\rho\left(\Psi_{k}(H(x))\right)\right|
\\
\leq & \; \frac{2|H(x)|}{|\alpha_{d-i+1}|}\int_{0}^{1}\left\|DH^{-1}(\tau \Psi_k(H(x)))-DH^{-1}(0)\right\|\dd \tau\xrightarrow[k\to\infty]{}0.
\label{eq:difference_Phi_Psi_discrete_bis}
\end{split}
\end{equation}
It then follows from~\eqref{eq:triangular_inequality_1} and Lemma~\ref{lem:directional_behavior_linear_flow_discrete} that 
\begin{equation}
\left|\rho\left(\Phi_{k}(x)-x^{*}\right)-\text{{\rm sign}}(\alpha_{d-i+1})v_{d-i+1}(x^*)\right|\xrightarrow[k\to\infty]{}0.
\label{eq:first_directional_convergence_discrete}
\end{equation}

Similarly, one has 
\begin{equation}
\begin{split}
& \; \left|\rho\left(\frac{\Phi_{k+1}(x)-\Phi_{k}(x)}{\gamma}\right)+\text{{\rm sign}}(\alpha_{d-i+1})v_{d-i+1}(x^*)\right|\\[6pt]
\leq & \; \left|\rho\left(\frac{\Phi_{k+1}(x)-\Phi_{k}(x)}{\gamma}\right)-\rho\left(\frac{\Psi_{k+1}(H(x))-\Psi_{k}(H(x))}{\gamma}\right)\right|\\[6pt]
+ & \; \left|\rho\left(\frac{\Psi_{k+1}(H(x))-\Psi_{k}(H(x))}{\gamma}\right)+\text{{\rm sign}}(\alpha_{d-i+1})v_{d-i+1}(x^*)\right|.
\label{eq:first_decomposition_rho_difference_Phi}
\end{split}
\end{equation}
From~\eqref{eq:calcul_difference_psi} and the orthogonality of the family $\{v_{i}(x^*)\}_{i=1}^{d}$, we have
 $$
 \left| \lambda_{d-i+1}(x^*)^{-1}\kappa_{i}^{-k}(\gamma)\left(\frac{\Psi_{k+1}(H(x))-\Psi_{k}(H(x))}{\gamma}\right)\right|\geq |\alpha_{d-i+1}|>0,
 $$ 
which, by~\eqref{eq:inegalite_basique_normalisee}, implies after proper rescaling
\small
\begin{equation}
\begin{split}
 & \; \left|\rho\left(\frac{\Phi_{k+1}(x)-\Phi_{k}(x)}{\gamma}\right)-\rho\left(\frac{\Psi_{k+1}(H(x))-\Psi_{k}(H(x))}{\gamma}\right)\right|\\
 \leq & \; \frac{2\kappa_{i}^{-k}(\gamma)}{\lambda_{d-i+1}(x^*)|\alpha_{d-i+1}|}\left|\frac{\Phi_{k+1}(x)-\Phi_{k}(x)}{\gamma}-\frac{\Psi_{k+1}(H(x))-\Psi_{k}(H(x))}{\gamma}\right|.
 \label{eq:first_decomposition_rho_difference_Psi}
\end{split}
\end{equation}
\normalsize
Note that, by~\eqref{def:steepest_descent_iterates} and~\eqref{def:steepest_descent_iterates_quadratic_case}, we have
$$
\left|\frac{\Phi_{k+1}(x)-\Phi_{k}(x)}{\gamma}-\frac{\Psi_{k+1}(H(x))-\Psi_{k}(H(x))}{\gamma}\right|=\left|\nabla f(\Phi_{k}(x))-\nabla^{2}f(x^*)\Psi_{k}(H(x))\right|.
$$
Since $f$ is of class $C^{3}$ and $\nabla f(x^*)=0$, there exists $c>0$ such that 
$$
|\nabla f(\Phi_{k}(x))-\nabla^{2}f(x^*)(\Phi_k(x)-x^*)|\leq c|\Phi_k(x)-x^*|^{2}
$$
and, hence, by the triangular inequality,~\eqref{eq:bounded_hessian}, and~\eqref{eq:difference_Phi_Psi_discrete},
\begin{equation}
\begin{split}
& \; \left|\frac{\Phi_{k+1}(x)-\Phi_{k}(x)}{\gamma}-\frac{\Psi_{k+1}(H(x))-\Psi_{k}(H(x))}{\gamma}\right|\\
\leq & \;
c|\Phi_k(x)-x^*|^{2}+L_{2}\left|\Phi_k(x)-x^*-\Psi_{k}(H(x))\right|\\
\leq & \;  c|\Phi_k(x)-x^*|^{2}+L_2\int_{0}^{1}\left\|DH^{-1}(\tau \Psi_k(H(x)))-DH^{-1}(0)\right\|\dd \tau\,|\Psi_k(H(x))|.
\label{eq:estimation_one_term_triangular_ineq}
\end{split}
\end{equation}
It then follows from~\eqref{eq:first_decomposition_rho_difference_Phi},~\eqref{eq:first_decomposition_rho_difference_Psi},~\eqref{eq:Phi_exp_conv_discret},~\eqref{eq:borne_Psi_k_cas_discret}, the continuity of $DH^{-1}$, and Lemma~\ref{lem:directional_behavior_linear_flow_discrete} that
\begin{equation}
\left|\rho\left(\Phi_{k+1}(x)-\Phi_{k}(x)\right)+\text{{\rm sign}}(\alpha_{d-i+1})v_{d-i+1}(x^*)\right|\xrightarrow[k\to\infty]{}0.
\label{eq:second_directional_convergence_discrete}
\end{equation}
Therefore,~\eqref{eq:egalite_de_limites_cas_discret} follows from~\eqref{eq:first_directional_convergence_discrete} and~\eqref{eq:second_directional_convergence_discrete}.
\end{proof*}

\begin{remark}[The strict stepsize condition is sharp]
\label{rem:speed_of_convergence_discrete_case}
Indeed, in the quadratic case, if  $\gamma\in [2/(\lambda_{d-i+1}(x^*)+\lambda_{d}(x^*)),2/\lambda_{d})$ and $x\in {\mathcal S}_{i}^{H}$, the directions  $\rho\left(\Psi_{k}(H(x))\right)$ and $\rho\left(\Psi_{k+1}(H(x))-\Psi_{k}(H(x))\right)$ do not converge in general. 

For instance, in the case of $\left(\rho\left(\Psi_{k}(H(x))\right)\right)_{k\in\NN}$, if $\gamma>2/(\lambda_{d-i+1}(x^*)+\lambda_{d}(x^*))$ and $\alpha_{d}=\langle H(x),v_{d}(x^*)\rangle\neq 0$, arguing as in the proof of Lemma~\ref{def:kappa_dir}, one gets that 
$$
\begin{aligned}
\rho\left(\Psi_{2k}(H(x))\right)&\xrightarrow[k\to\infty]{} \text{{\rm sign}}(\alpha_{d})v_{d}(x^*),\\
\rho\left(\Psi_{2k+1}(H(x))\right)&\xrightarrow[k\to\infty]{}-\text{{\rm sign}}(\alpha_{d})v_{d}(x^*),
\end{aligned}
$$
i.e., $\rho(\Psi_{k}(x))$ aligns, in an oscillatory manner, with $\RR v_{d}(x^*)$, the eigenspace associated with the largest eigenvalue $\lambda_{d}(x^*)$ of $\nabla^{2} f(x^*)$. If $\gamma=2/(\lambda_{d-i+1}(x^*)+\lambda_{d}(x^*))$, which is the stepsize for which one has the optimal rate of convergence in~\eqref{eq:borne_Psi_k_cas_discret}, then 
$$
\begin{aligned}
\rho\left(\Psi_{2k}(H(x))\right)&\xrightarrow[k\to\infty]{}\frac{\alpha_{d-i+1}v_{d-i+1}(x^*)+\alpha_{d}v_{d}(x^*)}{\sqrt{\alpha_{d-i+1}^{2}+\alpha_{d}^2}},\\[4pt]
\rho\left(\Psi_{2k+1}(H(x))\right)&\xrightarrow[k\to\infty]{}\frac{\alpha_{d-i+1}v_{d-i+1}(x^*)-\alpha_{d}v_{d}(x^*)}{\sqrt{\alpha_{d-i+1}^{2}+\alpha_{d}^2}},
\end{aligned}
$$
i.e. $\rho(\Psi_{k}(x))$  oscillates between two elements of $\text{span}\{v_{d-i+1}(x^*),v_{d}(x^*)\}$. The same behavior occurs for $\rho\left(\Psi_{k+1}(H(x))-\Psi_{k}(H(x))\right)$.
The sequences $\rho\left(\Phi_{k}(x)\right)$ and $\rho\left(\Phi_{k+1}(x)-\Phi_{k}(x)\right)$ can be studied as in the proof of Theorem~\ref{HG_cas_discret}.

\end{remark}

\begin{theorem}
\label{th:align_discrete}

(i) {\rm (Generic alignment)} Suppose  that Assumption \ref{ass:H} holds at each critical point $x^*$ and that $\nabla f$ is $L$-Lipschitz continuous with $L>0$. If $\gamma\in (0,1/L)$ and $f$ is coercive, the set of initial conditions for which the gradient method converges to local minimizers of $f$, with both velocities and secants aligning with the associated talweg, has full Lebesgue measure.

(ii) {\rm (Alignment rate)} Suppose that $f$ is $C^\infty$, let $x$ be such that  $\Phi_k(x)\to x^*$ as $k\to\infty$, where $x^*$ is a minimizer as in \Cref{ass:min} and $\Phi_k(x)\in  {\mathcal S}_d^H(x^*)$ for some $k$. Assume in addition the non-resonance conditions at $x^*$: 
for all integers $m_1,\dots,m_d$ such that $\sum_{j=1}^d m_j \ge 2$,
\[\lambda_i(x^*) \neq \sum_{j=1}^d m_j \lambda_j(x^*) \text{ for all } i=1,\dots,d.
\qquad
\]

If
$\displaystyle
\gamma \in \left(0,\frac{2}{\lambda_1(x^*)+\lambda_2(x^*)}\right),
$ 
then alignment with the tangent to the talweg goes at a rate:
\small
\begin{align*}
\dist \!\Big(
\rho(\Phi_k(x)-x^*),
\,T_{x^*}\tal([r^*,\bar r))
\Big)
&=
\mathrm{O}\!\left(
\left(\frac{1-\gamma\lambda_2(x^*)}{1-\gamma\lambda_1(x^*)}\right)^k
\right)
+\mathrm{O}\!\left((1-\gamma\lambda_1(x^*))^k\right),\\
\dist \!\Big(
\rho\!\left(\Phi_{k+1}(x)-\Phi_k(x)\right),
\,T_{x^*}\tal([r^*,\bar r))\Big)
&=
\mathrm{O}\!\left(
\left(\frac{1-\gamma\lambda_2(x^*)}{1-\gamma\lambda_1(x^*)}\right)^k
\right)
+\mathrm{O}\!\left((1-\gamma\lambda_1(x^*))^k\right). 
\end{align*}
\normalsize
\end{theorem}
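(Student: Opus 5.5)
The plan is to transpose the proof of \Cref{th:align} to the discrete setting, with \Cref{HG_cas_discret} and \Cref{lem:directional_behavior_linear_flow_discrete} replacing their continuous counterparts.

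For item (i), I will first use that $\gamma<1/L$ makes $\Phi_1=\mathrm{Id}-\gamma\nabla f$ a $C^2$ diffeomorphism of $\RR^d$. Indeed, $D\Phi_1=I_d-\gamma\nabla^2 f$ has eigenvalues in $[1-\gamma L,1+\gamma L]\subset(0,2)$, so it is invertible everywhere. It is also injective, since $\langle \Phi_1(x)-\Phi_1(y),x-y\rangle\geq(1-\gamma L)|x-y|^2$, and it is proper, so Hadamard's theorem applies. Hence preimages $\Phi_k^{-1}$ of null sets are null. Next I will use coercivity and the descent lemma, $f(x_{k+1})\le f(x_k)-\gamma(1-\gamma L/2)|\nabla f(x_k)|^2$. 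These show that iterates stay in a compact sublevel set and that $\nabla f(x_k)\to0$. Since critical points are isolated and $|x_{k+1}-x_k|\to0$, the limit set is connected and consists of critical points, so the sequence converges to a single critical point. This is the analogue of Claim 1.

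I then partition $\RR^d=\NNN\cup\FFF$ exactly as in \eqref{eq:defNproof}--\eqref{eq:defFproof}, replacing $t\in\NN$ by the iteration index $k$. For unstable critical points I will use the local stable manifold of the hyperbolic fixed point $x^*$ of $\Phi_1$. Its dimension is $<d$ because $\nabla^2f(x^*)$ has a negative eigenvalue, so $D\Phi_1(x^*)$ has an eigenvalue $>1$. Since $\gamma<1/L\le 1/\lambda_d(x^*)$, we have $\gamma<2/(\lambda_1+\lambda_d)$ and $\gamma\neq1/\lambda_j$. Therefore \Cref{HG_cas_discret} applies at every local minimizer with $i=d$: the sets ${\mathcal S}_i^H$ are invariant, and on ${\mathcal S}_d^H$ both secants and increments align with $\RR v_1(x^*)$, which is the talweg tangent by \Cref{th:talweg_is_gradient_extremal}. $\NNN$ is null as a countable union of preimages of lower-dimensional manifolds, which concludes (i).

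For item (ii), I shift $k$ so that $x\in{\mathcal S}_d^H$, hence $\alpha_1\ne0$. Under the non-resonance hypothesis, Sternberg's theorem for maps yields a $C^2$ conjugacy $H$, so $DH^{-1}$ is locally Lipschitz with some constant $C$. The resonance condition for the linear map $I-\gamma\nabla^2f(x^*)$ concerns multiplicative relations among the $1-\gamma\lambda_j$. This translation is the step I expect to be delicate: I will either reinterpret the hypothesis accordingly or note that it must be read for the eigenvalues $\mu_j=1-\gamma\lambda_j$ of $D\Phi_1(x^*)$. For the secant, \eqref{eq:triangular_inequality_1}, \eqref{eq:difference_Phi_Psi_discrete_bis} and \eqref{eq:estimate_secant_Psi_discrete} give
\[
\bigl|\rho(\Phi_k(x)-x^*)-\mathrm{sign}(\alpha_1)v_1\bigr|\le \tfrac{2|H(x)|}{|\alpha_1|}\,C|\Psi_k(H(x))|+\tfrac{2|H(x)|}{|\alpha_1|}\kappa_{{\rm dir},d}(\gamma)^k .
\]
Here $|\Psi_k(H(x))|\le\kappa_d(\gamma)^k|H(x)|=(1-\gamma\lambda_1)^k|H(x)|$. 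For $\gamma<2/(\lambda_2+\lambda_d)$ we have $\kappa_{{\rm dir},d}=(1-\gamma\lambda_2)/(1-\gamma\lambda_1)$. In the remaining range the second branch of \eqref{def:kappa_dir} appears, and I will check that the stated bound is still what is claimed; otherwise I will interpret the ratio with absolute values.

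For the increments, I will bound the correction term in \eqref{eq:estimation_one_term_triangular_ineq}, after multiplying by $\kappa_d^{-k}$. The quadratic term $c|\Phi_k(x)-x^*|^2\kappa_d^{-k}=O(\kappa_d^{k})$ is handled by \eqref{eq:Phi_exp_conv_discret}. The term $\int_0^1\|DH^{-1}(\tau\Psi_k)-I\|\,d\tau\,|\Psi_k|\,\kappa_d^{-k}\le C|\Psi_k|=O(\kappa_d^k)$ follows from the Lipschitz bound. Combining these with \eqref{eq:estimate_derivative_Psi_discrete} gives the second estimate.
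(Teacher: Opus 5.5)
Your plan follows the paper's proof step for step. For (i) you use the same partition $\NNN\cup\FFF$ and the bound $\gamma<1/L\le 1/\lambda_d(x^*)\le 2/(\lambda_1(x^*)+\lambda_d(x^*))$ at every minimizer; your Hadamard and descent-lemma details fill in what the paper calls a ``mere adaptation'' and are fine. For (ii) you insert a Sternberg conjugacy with Lipschitz $DH^{-1}$ into the triangle inequalities of \Cref{HG_cas_discret} and \Cref{lem:directional_behavior_linear_flow_discrete}.

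The genuine gap is the step you postpone, and it cannot be closed. For $d\ge 3$ the check in the second branch of \eqref{def:kappa_dir} fails, and reading the ratio with absolute values does not repair it. Write $\lambda_j=\lambda_j(x^*)$. With $i=d$, \Cref{lem:directional_behavior_linear_flow_discrete} and the bound $|\Psi_k(H(x))|\le(1-\gamma\lambda_1)^k|H(x)|$ need $\gamma<2/(\lambda_1+\lambda_d)$, which is a strictly smaller window than $(0,2/(\lambda_1+\lambda_2))$. Moreover, on $(2/(\lambda_2+\lambda_d),2/(\lambda_1+\lambda_d))$ one has $\kappa_{\mathrm{dir},d}(\gamma)=(\gamma\lambda_d-1)/(1-\gamma\lambda_1)>|1-\gamma\lambda_2|/(1-\gamma\lambda_1)$.

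Take a quadratic $f$ (so $H=\mathrm{Id}$ and there is no nonlinear correction) and $\alpha_d\neq 0$. Then the distance from $\rho(\Phi_k(x)-x^*)$ to $\RR v_1(x^*)$ is of exact order $\kappa_{\mathrm{dir},d}(\gamma)^k$. This ratio tends to $1$ as $\gamma\uparrow 2/(\lambda_1+\lambda_d)$, whereas $\max\{|1-\gamma\lambda_2|/(1-\gamma\lambda_1),\,1-\gamma\lambda_1\}$ stays bounded away from $1$ there. For $\gamma\ge 2/(\lambda_1+\lambda_d)$ the directions in general do not align with $v_1(x^*)$ at all (\Cref{rem:speed_of_convergence_discrete_case}). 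Hence (ii) is false as stated when $d\ge 3$. The paper's proof has the same blind spot: it quotes the lemma without checking which branch of \eqref{def:kappa_dir} is active.

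What your argument actually proves is one of the following:
\begin{itemize}
\item the claimed rate for $\gamma\in(0,2/(\lambda_2+\lambda_d)]$;
\item the bound $\mathrm{O}(\kappa_{\mathrm{dir},d}(\gamma)^k)+\mathrm{O}((1-\gamma\lambda_1)^k)$ for $\gamma\in(0,2/(\lambda_1+\lambda_d))$.
\end{itemize}
For $d=2$ the two windows coincide, and your absolute-value reading is exactly right.

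Your concern about non-resonance is justified, and the reinterpretation is not optional. Sternberg's theorem for the map $\Phi_1$ requires $\mu_i\neq\prod_j\mu_j^{m_j}$ with $\mu_j=1-\gamma\lambda_j$, and the additive condition on the $\lambda_j$ does not imply this. A counterexample is the function of \Cref{rem:alignment_speed_sharpness_discrete} with $\lambda_1=1$, $\lambda_2\in(1,2)$ irrational (so the stated additive condition holds) and $\gamma=2-\lambda_2$. Then $\mu_2=\mu_1^2$, i.e. $q=1$ in the Appendix recursion. Consequently $x_{2,k}= bk\mu_1^{2k}+\mathrm{O}(\mu_1^{2k})$ with $b\neq 0$, and the distance to the talweg tangent is of order $k\mu_1^k$, not $\mathrm{O}(\mu_1^k)$. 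The paper invokes Sternberg without addressing this, so on this point you are more careful than the source.
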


\begin{proof}
Item (i) is a mere adaptation of the proof of \Cref{th:align}~(i), using \Cref{HG_cas_discret}, and the fact that, for every critical point $x^*$,
$$
\frac{2}{\lambda_1(x^*)+\lambda_d(x^*)}\geq 1/L.
$$   
We prove (ii). Observe that, by assumption $\Phi_k(x)\neq\Phi_{k+1}(x)$ for $k\geq 0$, both being different from $x^*$. Under the non-resonance condition and the smoothness of $f$,
Sternberg's theorem \cite[Theorem 2]{MR96853} ensures that the conjugacy
$H$ may be chosen $C^k$ with $k\ge2$, so that 
there exists $C_H>0$ such that
\[
\|DH^{-1}(y)-DH^{-1}(0)\|
\le C_H |y| \mbox{ for $y$ in a neighborhood of $0$.}
\]
 Write $
H(x)=\sum_{j=1}^d \alpha_j v_j(x^*),
\mbox{ with }
\alpha_1\neq0.$

\smallskip
\noindent
Let us estimate for $\rho(\Phi_k(x)-x^*)$. Using the same triangular decomposition as in the proof of 
Theorem~\ref{HG_cas_discret},
\begin{align*}
& \;\left|
\rho(\Phi_k(x)-x^*)
-
\text{sign}(\alpha_1)v_1(x^*)
\right| \\
\le & \;
\left|
\rho(\Phi_k(x)-x^*)
-
\rho(\Psi_k(H(x)))
\right|
+
\left|
\rho(\Psi_k(H(x)))
-
\text{sign}(\alpha_1)v_1(x^*)
\right|.
\end{align*}
The second term is controlled by 
Lemma~\ref{lem:directional_behavior_linear_flow_discrete}: $\displaystyle
\left|
\rho(\Psi_k(H(x)))
-
\text{sign}(\alpha_1)v_1(x^*)
\right|
=
\mathrm{O}\!\left(\kappa_{\mathrm{dir},1}(\gamma)^k\right).$
 For the first term, using 
\eqref{eq:difference_Phi_Psi_discrete_bis} and \eqref{eq:borne_Psi_k_cas_discret}
together with the Lipschitz continuity of $DH^{-1}$ yields
\[
\left|
\rho(\Phi_k(x)-x^*)
-
\rho(\Psi_k(H(x)))
\right|
=
\mathrm{O}\!\left(\kappa_1(\gamma)^k\right).
\]
  Recalling that $\mathrm{sign}(\alpha_1)v_1(x^*)\in T_{x^*}\tal([r^*,\bar r))$ 
 proves the secant estimate.
\medskip
\noindent
Let us estimate now the direction of the ``discrete velocity". Arguing as in 
\eqref{eq:first_decomposition_rho_difference_Phi},
\begin{align*}
&\; \left|
\rho\!\left(\frac{\Phi_{k+1}(x)-\Phi_k(x)}{\gamma}\right)
+
\text{sign}(\alpha_1)v_1(x^*)
\right|\\
\le&\;
\left|
\rho\!\left(\frac{\Phi_{k+1}(x)-\Phi_k(x)}{\gamma}\right)
-
\rho\!\left(\frac{\Psi_{k+1}(H(x))-\Psi_k(H(x))}{\gamma}\right)
\right|\\
&
+
\left|
\rho\!\left(\frac{\Psi_{k+1}(H(x))-\Psi_k(H(x))}{\gamma}\right)
+
\text{sign}(\alpha_1)v_1(x^*)
\right|.
\end{align*}

The second term is again controlled by 
the quadratic case Lemma~\ref{lem:directional_behavior_linear_flow_discrete}. The first term is estimated using 
\eqref{eq:first_decomposition_rho_difference_Psi}, 
\eqref{eq:estimation_one_term_triangular_ineq},
\eqref{eq:Phi_exp_conv_discret},
\eqref{eq:borne_Psi_k_cas_discret},
and the Lipschitz property of $DH^{-1}$,
yielding a term of order $\mathrm{O}(\kappa_1(\gamma)^k)$. Combining the above estimates concludes the proof.
\end{proof}

\begin{remark}[Sharpness of the alignment rate in the discrete case]
\label{rem:alignment_speed_sharpness_discrete}
Unlike in the quadratic case, considered in \Cref{lem:directional_behavior_linear_flow_discrete},
one cannot, in general, expect a pure spectral-gap rate.  Indeed, as in \Cref{rem:alignement-rate-cont}, consider
$f(x_1,x_2)=\frac12\lambda_1 x_1^2+\frac12\lambda_2 x_2^2+a x_1^2 x_2$ with $0<\lambda_1<\lambda_2$ and $a\neq 0$. Assume that $\lambda_{2}>2\lambda_{1}$ and let $\gamma\in (0,1/\lambda_{2})$. Given $x_0=(\alpha_1,\alpha_2)$, with $\alpha_{1}\neq 0$, we consider the gradient descent iterates $x_{k}=(x_{k,1},x_{k,2})$ defined through
\begin{equation}
\begin{aligned}
x_{1,k+1}
&=
(1-\gamma\lambda_1) x_{1,k}
-
2a\gamma x_{1,k}x_{2,k},
\\
x_{2,k+1}
&=
(1-\gamma\lambda_2) x_{2,k}
-
a\gamma x_{1,k}^2,
\end{aligned}
\label{eq:sys_discrete_eqns}
\end{equation}
for all $k\in\NN$. We show in Section~\ref{subsec:some_proof} in the Appendix that, provided that $|x_0|$ is small enough, 
\begin{equation}
\mathrm{dist}\!\left(
\rho(x_k),
\,T_0\extun
\right)
= C(1-\gamma\lambda_1)^k+\mathrm{o}\left((1-\gamma\lambda_1)^k\right), \text{ for some } C>0.
\label{eq:dist_talweg_ex_2}
\end{equation}
\end{remark}

\begin{corollary}[Valley is generically reached in finite time (discrete case)]\label{cor:valley_absortion_discrete_case}
 Suppose that Assumptions  \ref{ass:H} and \ref{ass:min} hold,  let $\eta>0$,  $H\colon B(x^*,\eta)\to\RR^{d}$ be as in Theorem~\ref{HG_cas_discret}{\rm(i)}, and let  $\gamma\in(0,2/(\lambda_{1}(x^*)+\lambda_{d}(x^*)))$. Then,  for every $w>0$ and $x\in {\mathcal S}_{d}^{H}(x^*)$, there exists $k_0\in\NN$ such that 
$$
\Phi_{k}(x)\in \vall_{f,w}(x^*)\quad\text{for all } k\geq k_0.
$$ 
\end{corollary}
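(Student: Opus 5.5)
The plan is to mirror the proof of Corollary~\ref{on_va_vers_V_eps_bis}, replacing the continuous alignment result by its discrete counterpart, Theorem~\ref{HG_cas_discret}(iii) with $i=d$.

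\emph{Step 1: conic neighborhood inside the valley.} Fix $0<w_1<w$. By Proposition~\ref{valley_inclusions}(i), after possibly diminishing $\bar r$, there is $\delta\in(0,\eta)$ such that
\[
x^*+\bigl(\vall_{q,w_1}\cap B(0,\delta)\bigr)\subset \vall_{f,w}(x^*).
\]
We shrink $\delta$ further if needed so that $B(x^*,\delta)\subset[f<\bar r]_{x^*}$; this is possible since $[f<\bar r]_{x^*}$ is an open neighborhood of $x^*$.

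\emph{Step 2: convergence and directional alignment of the iterates.} Let $x\in{\mathcal S}_d^H(x^*)$ and $\gamma\in(0,2/(\lambda_1(x^*)+\lambda_d(x^*)))$. For $i=d$ we have $d-i+1=1$, so this stepsize is exactly the one allowed in Theorem~\ref{HG_cas_discret}(iii).
\begin{itemize}
\item The estimate~\eqref{eq:Phi_exp_conv_discret}, with $\kappa_d(\gamma)=1-\gamma\lambda_1(x^*)<1$, gives $\Phi_k(x)\to x^*$.
\item Since $x\in{\mathcal S}_d^H$, invariance (ii) gives $\Phi_k(x)\neq x^*$ for all $k$. Hence $\rho(\Phi_k(x)-x^*)$ is well defined.
\item By~\eqref{eq:egalite_de_limites_cas_discret}, $\rho(\Phi_k(x)-x^*)\to \pm v_1(x^*)$.
\end{itemize}

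\emph{Step 3: eventual membership in the cone.} This is the one point needing care: checking that $\pm v_1(x^*)$ is an interior direction of $\vall_{q,w_1}$. Set $A=\nabla^2f(x^*)$ and $g(y)=|(A^2-\lambda_1(x^*)A)y|-w_1|Ay|$. Then $\vall_{q,w_1}=\{g\le 0\}$, and $g$ is continuous and positively homogeneous of degree one. At $y=\pm v_1(x^*)$ we get $(A^2-\lambda_1 A)v_1=0$, so $g(\pm v_1)=-w_1\lambda_1(x^*)<0$, using Assumption~\ref{ass:min}. By continuity, $g<0$ on a neighborhood of $\pm v_1$ in the sphere. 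By homogeneity, $g(\Phi_k(x)-x^*)=|\Phi_k(x)-x^*|\,g(\rho(\Phi_k(x)-x^*))<0$ for $k$ large, so $\Phi_k(x)-x^*\in\vall_{q,w_1}$.

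\emph{Step 4: conclusion.} Combining Step 3 with $|\Phi_k(x)-x^*|<\delta$ for $k$ large, there is $k_0$ such that $\Phi_k(x)-x^*\in\vall_{q,w_1}\cap B(0,\delta)$ for all $k\ge k_0$. Step 1 then yields $\Phi_k(x)\in\vall_{f,w}(x^*)$ for all $k\geq k_0$.

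We expect no genuine obstacle. The only points to verify are that the stepsize range matches the case $i=d$ of Theorem~\ref{HG_cas_discret}, and the interior-direction argument of Step 3.
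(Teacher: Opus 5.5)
Your proposal is correct and follows the paper's route. The paper's proof simply says to repeat the argument of Corollary~\ref{on_va_vers_V_eps_bis}, using the directional convergence of Theorem~\ref{HG_cas_discret}(iii) (case $i=d$) together with Proposition~\ref{valley_inclusions}(i), which is exactly your Steps 1--4; your Step 3 ($g(\pm v_1(x^*))=-w_1\lambda_1(x^*)<0$, so $\pm v_1(x^*)$ is an interior direction of $\vall_{q,w_1}$) and the choice of $\delta$ with $B(x^*,\delta)\subset[f<\bar r]_{x^*}$ usefully spell out details the paper leaves implicit.
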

\begin{proof*} Using directional convergences to the talweg and \Cref{valley_inclusions}, the proof is similar to that of \Cref{on_va_vers_V_eps_bis}.\end{proof*}

Let us conclude by a result on gradient iterates concentration  within valleys:
\begin{theorem}[Volume concentration in valleys for the gradient method] 
\label{rapport_de_volumes_cas_discret}	
Suppose that Assumptions  \ref{ass:H} and \ref{ass:min} hold. Then, if $\eta>0$ is  small enough and $\gamma\in(0,1/\lambda_{d}(x^*))$, for every measurable
set $S \subset  B(x^*,\eta)$, with $\vol(S)>0$, and $w>0$, we have
\begin{equation}
\lim_{k\to \infty}\frac{\vol\left(\Phi_{k}(S) \cap \vall_{f,w}(x^*)\right)}{\vol\left(\Phi_{k}(S)\right)}=1.  
\label{rapport_de_volumes_cas_discret_eq}	
\end{equation}
\end{theorem}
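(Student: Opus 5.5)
We follow the strategy of the proof of Theorem~\ref{rapport_de_volumes}, replacing Jacobi's formula by a direct computation of the Jacobian of the iterates. The first task is to check that each $\Phi_k$ is injective on $B(x^*,\eta)$ with controlled Jacobian. Since $\gamma<1/\lambda_d(x^*)$ and $\lambda_d$ is continuous, shrinking $\eta$ we may assume $\gamma L_2<1$. Then $D\Phi_1(x)=I_d-\gamma\nabla^2 f(x)$ has spectrum in $[1-\gamma L_2,1-\gamma L_1]\subset(0,1)$, so $\det D\Phi_1(x)>0$. Moreover, by Lemma~\ref{lem:stability_discrete_flow}, $\Phi_1$ maps $B(x^*,\eta)$ into itself.

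Injectivity follows from strong monotonicity. If $\Phi_1(x)=\Phi_1(y)$, then $x-y=\gamma(\nabla f(x)-\nabla f(y))$, whence
\[
|x-y|^2=\gamma\langle \nabla f(x)-\nabla f(y),x-y\rangle\le \gamma L_2|x-y|^2 .
\]
Since $\gamma L_2<1$, this forces $x=y$ (the ball is convex). Hence each $\Phi_k$ is a $C^2$ diffeomorphism onto its image, and the change of variables formula applies with
\[
\det D\Phi_k(x)=\prod_{m=0}^{k-1}\det\bigl(I_d-\gamma\nabla^2 f(\Phi_m(x))\bigr)=\prod_{m=0}^{k-1}\prod_{i=1}^d\bigl(1-\gamma\lambda_i(\Phi_m(x))\bigr).
\]

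Next we bound the Jacobian uniformly. Taking logarithms, we compare each factor with $1-\gamma\lambda_i(x^*)$. Since the factors stay in a compact subset of $(0,\infty)$ and the $\lambda_i$ are $C^1$ with bounded gradient on $B(x^*,\eta)$ (as in \eqref{eq:preliminary_bounds}, by Corollary~\ref{cordiff}), there is $C_1>0$ with
\[
\bigl|\log(1-\gamma\lambda_i(\Phi_m(x)))-\log(1-\gamma\lambda_i(x^*))\bigr|\le C_1|\Phi_m(x)-x^*| .
\]
By \eqref{eq:stability_discrete_flow}, $|\Phi_m(x)-x^*|\le s(\gamma)^m\eta$, and the geometric series is summable uniformly in $x$. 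This yields $\zeta>0$ such that, for all $x\in B(x^*,\eta)$ and $k\in\NN$,
\[
e^{-\zeta}\,\Pi^k\le \det D\Phi_k(x)\le e^{\zeta}\,\Pi^k,\qquad \Pi:=\prod_{i=1}^d\bigl(1-\gamma\lambda_i(x^*)\bigr),
\]
which is the discrete analogue of \eqref{estimate_jacobian_continuous_case}.

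It remains to conclude. As in \eqref{eq:rapport_de_volumes_with_error}, the ratio in \eqref{rapport_de_volumes_cas_discret_eq} equals $1-R(k)$, where by the Jacobian bounds
\[
R(k)\le e^{2\zeta}\,\frac{\vol\bigl(S\setminus\Phi_k^{-1}(\vall_{f,w}(x^*))\bigr)}{\vol(S)} .
\]
Since $\lambda_1(x^*)<\lambda_d(x^*)$, we have $\gamma<1/\lambda_d(x^*)<2/(\lambda_1(x^*)+\lambda_d(x^*))$, so Corollary~\ref{cor:valley_absortion_discrete_case} applies. It gives $\mathbf 1_{S\setminus\Phi_k^{-1}(\vall_{f,w})}(x)\to0$ for every $x\in S\cap{\mathcal S}_d^H$. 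The set $B(x^*,\eta)\setminus{\mathcal S}_d^H=H^{-1}\bigl(\bigcup_{i<d}{\mathcal S}_i\bigr)$ is a finite union of lower-dimensional $C^1$ manifolds and hence Lebesgue null. Dominated convergence then gives $R(k)\to0$.

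The main obstacle is the injectivity and positivity of the Jacobian, which is needed for the change of variables; this is exactly where the stepsize restriction $\gamma<1/\lambda_d(x^*)$, rather than $2/\lambda_d(x^*)$, enters. Everything else transfers directly from the continuous case.
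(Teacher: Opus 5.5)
Your proof is correct and follows essentially the same route as the paper: the product formula for $\det D\Phi_k$, a uniform two-sided bound $e^{\pm\zeta}\Pi^k$ obtained from logarithms and the summability of $|\Phi_\ell(x)-x^*|$, then the ratio estimate combined with Corollary~\ref{cor:valley_absortion_discrete_case}, the nullity of $B(x^*,\eta)\setminus{\mathcal S}_d^H$, and dominated convergence. Two of your choices are mild improvements on the paper. Your injectivity argument for $\Phi_1$ via $\gamma L_2<1$ makes rigorous the change of variables, which the paper applies to a map it only calls a local diffeomorphism. Your use of the contraction estimate \eqref{eq:stability_discrete_flow} gives summability more directly than the paper's appeal to the conjugacy $H$.
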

\begin{proof*}  
Since $\gamma<2/L_{2}$ for $\eta$ small enough, Lemma~\ref {lem:stability_discrete_flow} implies that, for every $k\geq 0$, $\Phi_{k}(B(x^*,\eta)) \subset B(x^*,\eta)$. Using that $\gamma\in(0,1/\lambda_{d}(x^*))$, one has that $\Phi_1\colon B(x^*,\eta)\to\RR^{d}$ is a local diffeomorphism, and hence $\Phi_k=\Phi_1\circ\dots\circ\Phi_1$ ($k\text{ times}$) is also a local diffeomorphism. Moreover, for every $x\in B(x^*,\eta)$ and $k\geq 1$, the chain rule yields
\begin{equation}
\begin{aligned}
\det(\partial_x\Phi_{k}(x))&=\det\left(\left(I_{d}-\gamma \nabla^{2}f\left(\Phi_{k-1}(x)\right)\right) \partial_x\Phi_{k-1}(x)\right)\\
&=\prod_{\ell=0}^{k-1}\det\left( I_{d}-\gamma \nabla^{2}f\left(\Phi_\ell(x)\right) \right).
\end{aligned}
\label{eq:proof_concentration_disc_flow_1}
\end{equation}
Set $\overline{p}:= \det\left( I_{d}-\gamma \nabla^{2}f\left(x^*\right) \right)=\prod_{i=1}^{d}(1-\gamma\lambda_{i}(x^*))>0$ and notice that,  since $\RR^{d}\ni x'\mapsto \det\left( I_{d}-\gamma \nabla^{2}f\left(x'\right) \right)\in\RR$ is of class $C^1$ and  $\Phi_{\ell}(B(x^*,\eta)) \subset B(x^*,\eta),$ for all $\ell\geq 0
$, there exists a constant $C_1>0$ such that 
\begin{equation}
\overline{p}-C_1|\Phi_\ell(x)-x^*|\leq \det\left( I_{d}-\gamma \nabla^{2}f\left(\Phi_\ell(x)\right) \right) 
\leq \overline{p}+C_1|\Phi_\ell(x)-x^*|\quad\text{for all }x\in B(x^*,\eta). 
\label{eq:proof_concentration_disc_flow_2}
\end{equation} 

Shrink $\eta$ so that $0<\eta<\overline{p}/C_1$, set $a=\overline{p}-C_1\eta>0$ and $b=\overline{p}+C_1\eta$. For every $x\in B(x^*,\eta)$ and $\ell\geq 0$, setting $p_{\ell}(x)= C_1|\Phi_\ell(x)-x^*|$, Lemma~\ref{lem:stability_discrete_flow} implies that $\overline{p}\pm p_{\ell}(x)\in [a,b]\subset(0,\infty)$ for all $\ell\geq 0$. It follows from~\eqref{eq:proof_concentration_disc_flow_1},~\eqref{eq:proof_concentration_disc_flow_2}, and the inequality $|\log(\alpha)-\log(\beta)|\leq |\alpha-\beta|/a$ for all $\alpha,\,\beta\in [a,b]$,  that, for every $x\in B(x^*,\eta)$ and $k\geq 0$, we have
\begin{equation}
\det(\partial_x\Phi_{k}(x) ) \geq \exp\left(\sum_{\ell=0}^{k-1}\log\left(\overline{p}-p_{\ell}(x)\right) \right) \geq \exp\left(k\log(\overline{p})-\frac{C_1}{a}\sum_{\ell=0}^{k-1} |\Phi_\ell(x)-x^*| \right)
\label{eq:proof_concentration_disc_flow_3}
\end{equation}
and 
\begin{equation}
\det(\partial_x\Phi_{k}(x)))\leq \exp\left(\sum_{\ell=0}^{k-1}\log\left(\overline{p}+p_{\ell}(x)\right) \right)\leq \exp\left(k\log(\overline{p})+\frac{C_1}{a}\sum_{\ell=0}^{k-1} |\Phi_\ell(x)-x^*| \right). 
\label{eq:proof_concentration_disc_flow_4}
\end{equation}

Let $H$ be given by Theorem~\ref{HG_cas_discret}. From~\eqref{eq:Phi_exp_conv_discret}, the fact that $H$ can be taken to be bounded, and~\eqref{eq:def_kappa_gamma}, we deduce that 
$$
\zeta:=\frac{C_1}{a}\sum_{\ell=0}^{\infty}\sup_{x\in B(x^*,\eta)}|\Phi_{\ell}(x)-x^*|<+\infty.
$$
In turn,~\eqref{eq:proof_concentration_disc_flow_3} and~\eqref{eq:proof_concentration_disc_flow_4} yield
\begin{equation}
\exp\left(k\log(\overline{p})-\zeta\right)\leq \det(\partial_x\Phi_{k}(x))\leq \exp\left(k\log(\overline{p})+\zeta\right),
\label{eq:proof_concentration_disc_flow_5}
\end{equation}
for all  $x\in B(x^*,\eta)$ and $k\geq 0$. Since $\Phi_{k}$ is a local diffeomorphism, if $S\subset B(x^*,\eta)$ is measurable, with $\vol(S)>0$,  one also has $\vol(\Phi_{k}(S))>0$. Moreover, for every $w>0$, we have
$$
\frac{\vol\left(\Phi_{k}(S) \cap \vall_{f,w}(x^*)\right)}{\vol\left(\Phi_{k}(S)\right)}=1- R_{k}\quad\text{for all }k\in\NN,
$$
where 
\small
$$
R_{k}:=\frac{\vol\left(\Phi_{k}(S) \setminus \vall_{f,w}(x^*)\right)}{\vol\left(\Phi_{k}(S)\right)}=\frac{\displaystyle\int_{\Phi_{k}(S) \setminus \vall_{f,w}(x^*)}\dd x}{\displaystyle\int_{\Phi_{k}(S)}\dd x}=\frac{\displaystyle\int_{S\setminus \Phi_{k}^{-1}(\vall_{f,w}(x^*))}\det(\partial_x\Phi_{k}(x))\dd x}{\displaystyle\int_{S}\det(\partial_x\Phi_{k}(x))\dd x},
$$
\normalsize
which, together with~\eqref{eq:proof_concentration_disc_flow_5}, implies that 
$$
R_k\leq \exp(2\zeta)\frac{\vol\left(S \setminus \Phi_{k}^{-1}(\vall_{f,w}(x^*))\right)}{\vol(S)}.
$$
In turn, Corollary~\ref{cor:valley_absortion_discrete_case}, $\vol\left(B(x^*,\eta)\setminus {\mathcal S}_{d}^{H}(x^*)\right)=0$, and Lebesgue dominated convergence imply that $R_{k}\to 0$, as $k\to\infty$, from which~\eqref{rapport_de_volumes_cas_discret_eq} follows.  
\end{proof*}

{\bf Acknowledgements:} 
PB, JB, TM thank TSE-P and acknowledge financial support of the ANR (Programmes d'Investissements d'Avenir CHESS ANR-17-EURE-0010).

FJS was partially supported by l'Agence
Nationale de la Recherche (ANR), project ANR-22-CE40-0010, by KAUST through the subaward
agreement ORA-2021-CRG10-4674.6, and by Minist\`ere de l'Europe et des Affaires \'etrang\`eres (MEAE), project MATH AmSud 23-MATH-17.

JB thanks AI Interdisciplinary Institute ANITI funding, through the ANR under the France 2030 program (grant ANR-23-IACL-0002), Chair TRIAL, ANR MAD, ANR Regulia, Air Force Office of Scientific Research, Air Force Material Command, USAF, under grant number FA8655-22-1-7012. 
%%%%%%%%%%%%%%%%%%%%%%%%%%%%%%%%%%%%%%%%%%%%%%%%%%%%%%%

\appendix
\section{Appendix}
%%%%%%%%%%%%%%%%%%%%%%%%%%%%%%%%%%%%%%%%%%%%%%%%%%%%%%%

\subsection{Additional facts}

\begin{proposition}[More properties of the talweg] Make Assumptions \ref{ass:H} and \ref{ass:min}. 
If $\bar r$ is sufficiently close to $r^*$, then for every $r\in (r^*,\bar r)$ the set $\tal (r)$ has exactly two elements:
\begin{equation}
\tal(r)=\{\theta^{-}(r),\theta^{+}(r)\}\;\;\text{and they satisfy }\;\langle\theta^{\pm}(r)-x^*,\pm v_{1}(x^*)\rangle>0.
\label{eq:talweg_two_elements}
\end{equation}
Moreover, the curves $(r^*,\bar r)\ni r\to\theta^{\pm}(r)\in\RR^{d}$ are of class $C^{1}$ and, denoting by $\dot{\theta}^{\pm}$ their derivatives, we have $\lim_{r\to f(x^*),\, r>f(x^*)}\theta^{\pm}(r)=x^*$, 
\begin{equation}
\lim_{r\to f(x^*),\, r>f(x^*)}\rho\left(\dot{\theta}^{\pm}(r)\right)=\pm v_{1}(x^*),\quad\text{and}\quad\lim_{r\to f(x^*),\, r>f(x^*)}|\dot{\theta}^{\pm}(r)||\nabla f(\theta^{\pm}(r))|=1. 
\label{eq:behavior_theta_dot}
\end{equation}
\label{th:study_of_theta}
\end{proposition}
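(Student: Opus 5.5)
We use Theorem~\ref{th:talweg_is_gradient_extremal}, which identifies $\tal([r^*,\bar r))$ with $\extun\cap[f<\bar r]_{x^*}$, together with the proof of Theorem~\ref{la_croix_theoreme}, which shows that $\extun$ is the $C^1$ curve $S^{-1}(\{0\})$ with $S=\pi\circ z$ and $z(x)=P^\top(x)\nabla f(x)$. Since $z$ is a local $C^1$ diffeomorphism near $x^*$ with $z(x^*)=0$, the set $\extun$ near $x^*$ is $\{z^{-1}(s e_1): |s|<\epsilon\}$. Define $c(s)=z^{-1}(se_1)$, so that $\nabla f(c(s))=s\,v_1(c(s))$. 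Then $c$ is $C^1$, $c(0)=x^*$, and
\[
c'(0)=Dz(x^*)^{-1}e_1=\nabla^2f(x^*)^{-1}P(x^*)e_1=\lambda_1(x^*)^{-1}v_1(x^*).
\]
We split the curve into the two branches $s>0$ and $s<0$.

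The key step is to reparametrize each branch by the level $r$. Set $g(s)=f(c(s))$. Then
\[
g'(s)=\langle \nabla f(c(s)),c'(s)\rangle = s\,\langle v_1(c(s)),c'(s)\rangle,
\]
and $\langle v_1(c(s)),c'(s)\rangle\to\lambda_1(x^*)^{-1}>0$ as $s\to 0$. Hence $g'$ has the sign of $s$ for $s\neq 0$, so $g$ is strictly increasing on $[0,\epsilon)$ and strictly decreasing on $(-\epsilon,0]$, with $g(0)=r^*$. Inverting gives $C^1$ maps $s^\pm$ on $(r^*,\bar r)$, with $\pm s^\pm(r)>0$, for $\bar r$ small. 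Set $\theta^\pm(r)=c(s^\pm(r))$. By Theorem~\ref{th:talweg_is_gradient_extremal}, $\tal(r)=\extun\cap[f=r]_{x^*}$, once $\bar r$ is small enough that $[f<\bar r]_{x^*}$ lies in the chart neighborhood; this uses quadratic growth~\eqref{quadratic_growth}. Every point of $\extun\cap[f=r]$ is $c(s)$ with $g(s)=r$, so there are exactly two such points. The sign condition $\langle\theta^\pm-x^*,\pm v_1(x^*)\rangle>0$ follows from $c(s)-x^*=s\lambda_1^{-1}v_1(x^*)+o(s)$. Continuity gives $\theta^\pm(r)\to x^*$.

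For~\eqref{eq:behavior_theta_dot}, compute
\[
\dot\theta^\pm(r)=\frac{c'(s)}{g'(s)},\qquad s=s^\pm(r).
\]
Since $g'(s)$ has the sign of $s$, we get $\rho(\dot\theta^\pm)=\pm\rho(c'(s))\to\pm v_1(x^*)$. Also $|\nabla f(\theta^\pm)|=|s|$, so
\[
|\dot\theta^\pm|\,|\nabla f(\theta^\pm)|=\frac{|c'(s)|\,|s|}{|s|\,|\langle v_1(c(s)),c'(s)\rangle|}\to\frac{\lambda_1^{-1}}{\lambda_1^{-1}}=1.
\]

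The main obstacle is the careful bookkeeping of neighborhoods. We must shrink $\bar r$ so that three things hold at once. First, $[f<\bar r]_{x^*}$ must be contained in the image of the chart of $z$, so that every talweg point is of the form $c(s)$. Second, Theorem~\ref{th:talweg_is_gradient_extremal} must apply there. Third, $g$ must be monotone on each half-interval over the whole range of levels. A further subtlety is the orientation of $v_1(x)$: it is defined as a continuous choice on $B(x^*,\eta)$, so the sign conventions are consistent, and $\langle v_1(c(s)),c'(s)\rangle$ stays positive near $0$ by continuity.
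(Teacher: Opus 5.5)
Your proof is correct. For the first part (exactly two talweg points per level and the sign condition) it follows the same idea as the paper: restrict $f$ to a $C^1$ parametrization of $\extun$ through $x^*$ and show strict monotonicity on each side. The genuine difference is in how you obtain regularity and the limits in \eqref{eq:behavior_theta_dot}.

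The paper uses an arbitrary parametrization $\gamma$ with $\dot\gamma(0)=v_1(x^*)$. It gets $C^1$ regularity of $r\mapsto\theta^\pm(r)$ from the implicit function theorem applied to the Lagrange system $\bigl(\nabla^2f(x)\nabla f(x)-\lambda\nabla f(x),\,f(x)-r\bigr)=0$. The linearization is invertible thanks to the strict second-order condition from the proof of Theorem~\ref{th:talweg_is_gradient_extremal}. The limits then come from a cluster-point argument (tangency of $\dot\theta^\pm$ to $\extun$) combined with the differentiated constraint $\langle\nabla f(\theta^\pm(r)),\dot\theta^\pm(r)\rangle=1$.

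You instead parametrize by the signed slope, $c(s)=z^{-1}(se_1)$, for which $\nabla f(c(s))=s\,v_1(c(s))$ holds exactly. Regularity then reduces to the one-dimensional inverse function theorem for $g=f\circ c$ on each half-branch. Both limits become explicit computations from $|\nabla f(\theta^\pm)|=|s|$ and $\dot\theta^\pm=c'(s)/g'(s)$.

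Your route is shorter and more elementary. It avoids the second-order/KKT machinery, and you never have to match a local implicit-function branch with $\theta^\pm$. It even yields the exact identity $|\dot\theta^\pm|\,|\nabla f(\theta^\pm)|=|c'(s)|/\langle v_1(c(s)),c'(s)\rangle$. In exchange, the paper's route produces the linear system \eqref{characterization_of_the_derivatives} satisfied by $(\dot\theta^\pm,\dot\lambda^\pm)$. It is also variational in spirit, resting on nondegeneracy of the constrained critical point rather than on the specific chart $z$.

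You leave two details implicit, both routine. First, after shrinking $\bar r$, both branches reach every level in $(r^*,\bar r)$. Second, $c(s^\pm(r))$ lies in the component $[f=r]_{x^*}$, since the arc of $c$ between $0$ and $s^\pm(r)$ joins it to $x^*$ inside $[f\le r]$.
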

\begin{proof*} By Theorem~\ref{th:talweg_is_gradient_extremal}, for $\eta>0$ small enough, there exist $\bar{t}>0$ and a $C^{1}$ and a curve $\gamma\colon(-\bar{t},\bar{t})\to E_{1}\cap [f<\bar r]_{x^*}$ satisfying $\gamma(0)=x^*$ and $\dot{\gamma}(0)=v_{1}(x^*)$. Hence, $\gamma(t)=x^*+tv_{1}(x^*)+\mathrm o(t)$ for $t\in (-\bar{t},\bar{t})$. Let us define $\phi(t):=f(\gamma(t))$ on $(-\bar{t},\bar{t})$. Since $\nabla f(\gamma(t))=\nabla^{2}f(x^*)(\gamma(t)-x^*)+\mathrm o(t)$ and $\dot{\phi}(t)=\langle \nabla f(\gamma(t)),\dot{\gamma}(t)\rangle=\langle \nabla f(\gamma(t)),v_{1}(x^*)+\mathrm o(1)\rangle$, we deduce that 
$$
\dot{\phi}(t)=\left\langle\nabla^{2}f(x^*)(\gamma(t)-x^*)+\mathrm o(t),v_{1}(x^*)+\mathrm o(1)\right\rangle=\lambda_{1}(x^*)t+\mathrm o(t),\quad\text{for all }t\in(-\bar{t},\bar{t}).
$$
Reducing $\bar{t}$, if necessary, we have $\dot{\phi}<0$ in $(-\bar{t},0)$ and  $\dot{\phi}>0$ in $(0,\bar{t})$. In particular, for every $r\in\left(f(x^*),\min\{\lim_{t\to-\bar{t}}\phi(t),\lim_{t\to\bar{t}}\phi(t)\}\right)$ one has the existence of $t^{-}\in(-\bar{t},0)$ and $t^{+}\in(0,\bar{t})$ such that $\theta^{-}(r):=\gamma(t^{-})$ and $\theta^{+}(r):=\gamma(t^+)$ satisfy $\tal(r)=\{\theta^{-}(r),\theta^{+}(r)\}$. Notice that $\langle \theta^{+}(r)-x^*,v_{1}(x^*)\rangle=t^++\mathrm o(t^{+})$ and $\langle \theta^{-}(r)-x^*,v_{1}(x^*)\rangle=t^{-}+\mathrm o(t^{-})$, and hence, reducing $\bar{t}$ if necessary, we get $\langle \theta^{+}(r)-x^*,v_{1}(x^*)\rangle>0$ and $\langle \theta^{-}(r)-x^*,v_{1}(x^*)\rangle<0$. In conclusion, if $\bar r-r^*>0$ is small enough,~\eqref{eq:talweg_two_elements} holds. 

Define $F\colon B(x^*,\eta)\times\RR\times (r^*,\bar r)\to \RR^d \times \RR$ as 
$$
F(x,\lambda,r)=\left(\nabla^{2}f(x) \nabla f(x) -\lambda   \nabla f(x ) , f(x)- r\right).
$$
Setting $(\theta^{\pm}_{r},\lambda_{1,r}^{\pm}):=(\theta^{\pm}(r),\lambda_{1}(\theta^{\pm}(r)))$, for all $r\in (r^*,\bar r)$, one has 
$$
F(\theta^{\pm}_{r},\lambda_{1,r}^{\pm},r)=0.
$$
In view of applying the implicit function theorem,  we consider the linearized system of equations, with unknowns $(\delta_1,\delta_2)$  in $\RR^d\times\RR$, $(\theta', \lambda')$ in $\RR^d \times \RR$,
\be
\begin{split}
&\; (\delta_1, \delta_2) \\
=& \; D_{(x,\lambda)} F(\theta^{\pm}_{r}, \lambda_{1,r}^{\pm},r).(\theta',\lambda')\\
=& \left( D^3f(\theta^{\pm}_{r})\nabla f(\theta^{\pm}_{r}) \theta'+(\nabla^2f(\theta^{\pm}_{r}))^2\theta'  - \lambda_{1,r}^{\pm} \nabla^{2}f(\theta^{\pm}_{r}) \theta' - \lambda'\nabla f(\theta^{\pm}_{r}), \nabla f(\theta^{\pm}_{r}) \cdot \theta'  \right).
\end{split}
\label{inversibility_of_the_jacobian}
\ee
Back to the proof of Theorem~\ref{th:talweg_is_gradient_extremal} (see~\eqref{second_order_condition_sufficient_talweg_proof_lag}), if $\bar r-r^*>0$ is small enough, we have 
$$
 \left\langle  \nabla^2_{xx} L_r(\theta^{\pm}_{r}, \lambda_{1,r}^{\pm})h, h\right\rangle>0,\quad\text{for all $h\in\RR^d\setminus\{0\}$ such that } \langle\nabla f(\theta^{\pm}_{r}),h\rangle=0,
$$
which implies that
\be\label{auxiliary_optimization_problem}\ba{l}
\min\limits_{\theta'\in\RR^{d}} \;  \displaystyle\frac12\left\langle  \nabla^2_{xx} L_r(\theta^{\pm}_{r}, \lambda_{1,r}^{\pm})\theta', \theta' \right\rangle - \delta_1 \cdot \theta' \\[10pt]
\mbox{s.t. } \hspace{0.8cm}  \langle\nabla f(\theta^{\pm}_{r}),\theta'\rangle = \delta_2.
\ea
\ee
admits a unique solution $\hat{\theta}$ and, thanks to~\eqref{eq:non_null_gradient}, a unique Lagrange multiplier $\hat{\lambda}$. Therefore, by the implicit function theorem, the curves $ r\to \theta_{r}^{\pm}$ and $ r\to \lambda_{1,r}^{\pm}$ are  $C^{1}$ on $(r^*,\bar r)$, with derivatives $\dot{\theta}_{r}^{\pm}$ and $\dot{\lambda}_{r}^{\pm}$ satisfying 
\be
\ba{rcl}
 \left(D^{3}f(\theta_{r}^{\pm}) \nabla f(\theta_{r}^{\pm})  + \left(\nabla^2f(\theta_{r}^{\pm})\right)^2-\lambda_{1,r}^{\pm}\nabla^2 f(\theta_{r}^{\pm}) \right)\dot{\theta}_{r}^{\pm} &=&  \dot{\lambda}_{r}^{\pm} \nabla f(\theta_{r}^{\pm}), \\[6pt]
\langle\nabla f(\theta_{r}^{\pm}), \dot{\theta}_{r}^{\pm}\rangle &=& 1.
\ea
\label{characterization_of_the_derivatives}
\ee 
Notice that, by~\eqref{quadratic_growth} and the continuity of $\lambda_{1}$, $\theta_{r}^{\pm}\to x^*$. As a consequence, since $\rho(\dot{\theta}_{r}^{\pm})\in T_{\theta_{r}^{\pm}}E_{1}$, one deduces that, as $r\to r^*$, with $r>r^*$, any cluster point of $\rho(\dot{\theta}_{r}^{\pm})$ belongs to $T_{x^*}E_{1}=\RR v_{1}(x^*)$ and hence must belong to $\{-v_{1}(x^*),v_{1}(x^*)\}$. On the other hand by Theorem~\ref{th:talweg_is_gradient_extremal}, the gradient is an eigenvector along the talweg thus $\rho\left(\nabla f(\theta_{r}^{\pm})\right)={\mathcal S}_{r}^{\pm} v_{1}(\theta_{r}^{\pm})$, with ${\mathcal S}_{r}^{\pm}\in\{-1,1\}$. It then follows from  the second equation in~\eqref{characterization_of_the_derivatives} that $\langle v_{1}(\theta_{r}^{\pm}),\dot{\theta}_{r}^{\pm}\rangle=\pm 1$ and hence $\rho(\dot{\theta}_{r}^{\pm})$ has only one cluster point given by $\pm v_{1}(x^*)$, from which the first equality in~\eqref{eq:behavior_theta_dot} follows. Finally, from the second equation in~\eqref{characterization_of_the_derivatives}, we also have
$$
1=\langle v_{1}(x^*),v_{1}(x^*)\rangle=\lim_{r\to r^*,\, r>r^*}\left\langle \frac{\nabla f(\theta_{r}^{\pm})}{|\nabla f(\theta_{r}^{\pm})|},\frac{\dot{\theta}_{r}^{\pm}}{|\dot{\theta}_{r}^{\pm}|}\right\rangle=\lim_{r\to r^*,\, r>r^*}\frac{1}{|\nabla f(\theta_{r}^{\pm})||\dot{\theta}_{r}^{\pm}|}, 
$$
from which the second equality in~\eqref{eq:behavior_theta_dot} follows. 
\end{proof*}

\begin{remark}
As a consequence of the second relation in~\eqref{eq:behavior_theta_dot}, one can provide an estimate for the blow-up of $|\dot{\theta}^{\pm}(r)|$ as $r\to f(x^*)$, with $r>f(x^*)$. Indeed, since $\nabla f(\theta^{\pm}(r))=\nabla^{2}f(x^*)(\theta^{\pm}(r)-x^*)+\mathrm o(|\theta^{\pm}(r)-x^*|)$, if $\eta$ is small enough,~\eqref{eq:behavior_theta_dot} and~\eqref{quadratic_growth} imply that 
$$
|\dot{\theta}^{\pm}(r)|\geq \frac{1}{2\lambda_{d}(x^*)|\theta^{\pm}(r)-x^*|}\geq \frac{1}{2\lambda_{d}(x^*)\sqrt{r-r^*}},\quad\text{for all }r\in (r^*,\bar r).
$$
Observe that this estimate can be combined with the results of \cite{bolte2010characterizations}, to show the existence of $c>0$ such that
$$|\nabla \sqrt{f-r^*}|(x)\geq c \;\;\mbox{ over \;\; $[f<\bar r]_{x^*}\setminus\{x^*\}$.}$$
\end{remark}
\medskip

\begin{lemma}[Balls are absorbing] \label{lem_Step_1}
Under \Cref{ass:min}, if $\eta>0$ is small enough, then $\Phi_t(B(x^*,\eta)) \subset B(x^*,\eta),$ for all $t\in [0,\infty).$
\end{lemma}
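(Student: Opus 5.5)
The plan is to show that the squared distance $V(t)=\tfrac12|\Phi_t(x)-x^*|^2$ is nonincreasing along trajectories starting in $B(x^*,\eta)$. This is the standard Lyapunov argument based on the strong monotonicity of $\nabla f$ near a strong local minimizer.

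\emph{Step 1: strong monotonicity of $\nabla f$ around $x^*$.} Under \Cref{ass:min}, shrink $\eta$ so that $\lambda_1(y)\geq L_1>0$ for all $y\in B(x^*,\eta)$; this uses the continuity of $\lambda_1$ and $\lambda_1(x^*)>0$. Since the ball is convex and $\nabla f(x^*)=0$, the mean value formula gives, for every $y\in B(x^*,\eta)$,
\[
\langle \nabla f(y),y-x^*\rangle=\int_0^1\left\langle \nabla^2 f\bigl(x^*+s(y-x^*)\bigr)(y-x^*),y-x^*\right\rangle \dd s\geq L_1|y-x^*|^2 .
\]

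\emph{Step 2: Lyapunov decrease and invariance.} Fix $x\in B(x^*,\eta)$ and set $T:=\sup\{t\geq0:\Phi_s(x)\in B(x^*,\eta)\text{ for all }s\in[0,t]\}$. This supremum is positive by continuity of the flow and openness of the ball. On $[0,T)$, Step 1 yields
\[
\dot V(t)=-\langle\nabla f(\Phi_t(x)),\Phi_t(x)-x^*\rangle\leq -2L_1V(t),
\]
so $|\Phi_t(x)-x^*|\leq e^{-L_1t}|x-x^*|$ by Gr\"onwall's inequality. Suppose $T<\infty$. By continuity, $|\Phi_T(x)-x^*|\leq e^{-L_1T}|x-x^*|<\eta$, so $\Phi_T(x)$ lies in the open ball. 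Then the trajectory stays in the ball slightly beyond $T$, which contradicts the definition of $T$. Hence $T=\infty$.

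\emph{Step 3: existence of the flow.} The bound from Step 2 also shows that the solution remains in a compact set, so it cannot blow up and is defined on $[0,\infty)$. This makes $\Phi_t$ well defined there even without any global coercivity assumption.

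The main obstacle is the continuation argument in Step 2, namely ruling out an exit from the open ball. It is handled by the strict inequality $|x-x^*|<\eta$ combined with the monotone decay of $V$. Everything else is routine once $\eta$ has been chosen as in Step 1, which is compatible with the other constraints on $\eta$ imposed earlier in the paper since we only ever shrink $\eta$.
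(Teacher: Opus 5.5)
Your proof is correct and follows the same route as the paper, which also uses convexity of $f$ on the ball to get $\frac{d}{dt}|\Phi_t(x)-x^*|^2=2\langle\nabla f(x^*)-\nabla f(\Phi_t(x)),\Phi_t(x)-x^*\rangle\le 0$. Your version is more careful in two respects: it spells out the continuation argument that the paper leaves implicit (monotonicity of $\nabla f$ is only available while the trajectory stays in the ball), and it uses strong monotonicity to obtain the additional rate $|\Phi_t(x)-x^*|\le e^{-L_1t}|x-x^*|$.
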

\begin{proof*}
Just observe that if $\eta>0$ is small enough $f\colon B(x^*,\eta)\to \RR$ is convex, as $\lambda_1>0$ on  $B(x^*,\eta)$. It suffices then observe that $|\Phi_t(x)-x^*|$ is decreasing as $\displaystyle
   \frac{d}{d t}|\Phi_t(x)-x^*|^2= 2\langle \nabla f(x^*) - \nabla f(\Phi_t(x)), \Phi_t(x)-x^* \rangle \leq  0.$ 
\end{proof*}

Denote by $\D_d(\RR)$ (respectively, $\mathrm{O}_d(\RR),$ $\mathcal{S}_d(\RR))$ the set of diagonal (respectively, orthogonal, symmetric) matrices. We have: 

\begin{lemma}[Parametric diagonalization]
\label{lemdiff}
Let $F\colon\mathrm{O}_d(\RR)\times\D_d(\RR)\longrightarrow\mathcal{S}_d(\RR)$ be defined by $F(P,\mathscr{D})= P\mathscr{D}P^\top$ and let $(P_0,\mathscr{D}_0)\in\mathrm{O}_d(\RR)\times\D_d(\RR)$ be such that for any $i\neq j,$ $(\mathscr{D}_0)_{ii}\neq (\mathscr{D}_0)_{jj}.$ Then there exist two open sets $U\subset\mathrm{O}_d(\RR)\times\D_d(\RR)$ containing $(P_0,\mathscr{D}_0)$ and $V\subset\mathcal{S}_d(\RR)$ such that $F_{|U}:U\longrightarrow V$ is a $C^\infty$-diffeomorphism.
\end{lemma}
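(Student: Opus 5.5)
The natural approach is the inverse function theorem on the smooth manifold $\mathrm{O}_d(\RR)\times\D_d(\RR)$, which has dimension $\frac{d(d-1)}{2}+d=\frac{d(d+1)}{2}=\dim\mathcal{S}_d(\RR)$. Clearly $F$ is $C^\infty$, being polynomial in the entries. So it suffices to show that $DF(P_0,\mathscr{D}_0)$ is injective. The inverse function theorem then gives open neighborhoods $U$ of $(P_0,\mathscr{D}_0)$ and $V$ of $P_0\mathscr{D}_0P_0^\top$ such that $F_{|U}\colon U\to V$ is a $C^\infty$-diffeomorphism.

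To compute the differential, I will parametrize the tangent space of $\mathrm{O}_d(\RR)$ at $P_0$ as $\{P_0A: A^\top=-A\}$, and that of $\D_d(\RR)$ as $\D_d(\RR)$ itself. For a tangent vector $(P_0A,\Delta)$,
\[
DF(P_0,\mathscr{D}_0)(P_0A,\Delta)=P_0\bigl(A\mathscr{D}_0-\mathscr{D}_0A+\Delta\bigr)P_0^\top .
\]
Since conjugation by $P_0$ is invertible, injectivity reduces to the following claim: if $A\mathscr{D}_0-\mathscr{D}_0A+\Delta=0$ with $A$ skew-symmetric and $\Delta$ diagonal, then $A=0$ and $\Delta=0$.

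To verify the claim, look at entries. The $(i,j)$ entry of $A\mathscr{D}_0-\mathscr{D}_0A$ is $A_{ij}\bigl((\mathscr{D}_0)_{jj}-(\mathscr{D}_0)_{ii}\bigr)$. This vanishes on the diagonal, so the diagonal part of the equation gives $\Delta=0$. The off-diagonal part gives $A_{ij}=0$ for $i\neq j$, because the diagonal entries of $\mathscr{D}_0$ are pairwise distinct. Skew-symmetry already forces the diagonal of $A$ to vanish, so $A=0$.

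The only delicate point is this last step, which is exactly where the simple-spectrum hypothesis is used. Everything else is routine: the dimension count, and the fact that $\mathrm{O}_d(\RR)$ is a smooth embedded submanifold with the stated tangent space, which one can take as standard.
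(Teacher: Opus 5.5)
Your proposal is correct and follows the paper's proof essentially verbatim. Both use the inverse function theorem on $\mathrm{O}_d(\RR)\times\D_d(\RR)$ with the dimension count $\frac{d(d+1)}{2}$, tangent vectors $(P_0A,\Delta)$ with $A$ skew-symmetric, the formula $DF(P_0,\mathscr{D}_0)(P_0A,\Delta)=P_0(A\mathscr{D}_0-\mathscr{D}_0A+\Delta)P_0^\top$, and the entrywise argument relying on distinct diagonal entries; the only cosmetic difference is that you read off $\Delta=0$ directly from the diagonal, whereas the paper first obtains $A=0$ and then deduces $\Delta=0$.
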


\begin{proof*}
Let $G:\M_d(\RR)\times\M_d(\RR)\longrightarrow\M_d(\RR)$ be defined by $G(P,\mathscr{D})=P\mathscr{D}P^\top$. The function $G$ is of class $C^{\infty}$  and, for every $(M,\Delta)\in\M_d(\RR)\times\M_d(\RR),$
\begin{gather*}
DG(P_0,\mathscr{D}_0)(M,\Delta)=M\mathscr{D}_0P_0^\top+P_0\mathscr{D}_0M^\top+P_0\Delta P_0^\top.
\end{gather*}
It follows that $F=G_{|\mathrm{O}_d(\RR)\times\D_d(\RR)}$ is differentiable at $(P_0,\mathscr{D}_0).$ Take an element $H$ of the tangent space $T_{P_0}\mathrm{O}_d(\RR)$ at $P_0.$ Then $H=P_0A$ for some antisymmetric matrix $A.$ Let $\Delta\in\D_d(\RR).$ From above, we get
\begin{gather*}
DF(P_0,\mathscr{D}_0)(P_0A,\Delta)=P_0(A\mathscr{D}_0-\mathscr{D}_0A+\Delta)P_0^\top.
\end{gather*}
If $DF(P_0,\mathscr{D}_0)(P_0A,\Delta)=0$ then $A\mathscr{D}_0-\mathscr{D}_0A+\Delta=0$. Since, for any $i\neq j$,
$$
\begin{array}{rcl}
	  (A\mathscr{D}_0-\mathscr{D}_0A+\Delta)_{ij}&=&(A\mathscr{D}_0-\mathscr{D}_0A)_{ij}\\[8pt]
	  \; &=& \sum_{k=1}^d\big(A_{ik}(\mathscr{D}_0)_{kj}-(\mathscr{D}_0)_{ik}A_{kj}\big)\\[8pt]
    \; &=&	 A_{ij}(\mathscr{D}_0)_{jj}-(\mathscr{D}_0)_{ii}A_{ij}\\[8pt]
    \; &=&A_{ij}\big((\mathscr{D}_0)_{jj}-(\mathscr{D}_0)_{ii}\big)
    \end{array}
$$
and $(\mathscr{D}_0)_{jj}\neq(\mathscr{D}_0)_{ii},$ it follows that $A_{ij}=0.$ But $A$ is antisymmetric so that $A_{ii}=0.$ Hence $A=0.$ This yields, $0=A\mathscr{D}_0-\mathscr{D}_0A+\Delta=\Delta.$ Hence $DF(P_0,\mathscr{D}_0)$ is injective and since $\dim\big(\mathrm{O}_d(\RR)\times\D_d(\RR)\big)=\frac{d(d-1)}2+d=\frac{d(d+1)}2=\dim\mathcal{S}_d(\RR),$ $DF(P_0,\mathscr{D}_0)$ is an isomorphism. We conclude with the inverse function theorem.
\medskip
\end{proof*}

\begin{corollary}[Parametric diagonalization of Hessian]
\label{cordiff} 
Under \Cref{ass:H}, let $(P^*,\mathscr{D}^*)\in\mathrm{O}_d(\RR)\times\D_d(\RR)$ be such that $\nabla^2f(x^*)= P^*\mathscr{D}^* (P^*)^\top$.  Then there exists  $\eta>0$, and a unique couple $(P,\mathscr{D})\in C^1\big(B(x^*,\eta);\mathrm{O}_d(\RR)\big)\times C^1\big(B(x^*,\eta);\D_d(\RR)\big)$ such that $\big(P(x^*),\mathscr{D}(x^*)\big)=(P^*,\mathscr{D}^*)$  and 
\be\label{diagonalisation_second_derivee}
\nabla^2f(x)= P(x) \mathscr{D}(x) P(x)^\top,\quad\text{for all $x\in B(x^*,\eta)$.}
\ee
\end{corollary}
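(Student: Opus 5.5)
The plan is to deduce the statement from Lemma~\ref{lemdiff}, composed with the Hessian map $x\mapsto\nabla^2 f(x)$, and then to obtain uniqueness by a connectedness argument.

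\emph{Existence.} Under \Cref{ass:H} the diagonal entries of $\mathscr{D}^*$ are pairwise distinct. Lemma~\ref{lemdiff} applied at $(P_0,\mathscr{D}_0)=(P^*,\mathscr{D}^*)$ gives open sets $U\ni(P^*,\mathscr{D}^*)$ and $V\ni\nabla^2 f(x^*)$ such that $F_{|U}\colon U\to V$ is a $C^\infty$-diffeomorphism. Denote its inverse by $G\colon V\to U$. Since $f$ is $C^3$, the map $x\mapsto\nabla^2 f(x)\in\mathcal{S}_d(\RR)$ is $C^1$, so there is $\eta>0$ with $\nabla^2 f(B(x^*,\eta))\subset V$. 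Define
\[
(P(x),\mathscr{D}(x)):=G(\nabla^2 f(x)),\qquad x\in B(x^*,\eta).
\]
As a composition of a $C^\infty$ map with a $C^1$ map, this is $C^1$. It satisfies \eqref{diagonalisation_second_derivee} by construction, and $(P(x^*),\mathscr{D}(x^*))=G(F(P^*,\mathscr{D}^*))=(P^*,\mathscr{D}^*)$ because $(P^*,\mathscr{D}^*)\in U$.

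\emph{Uniqueness.} This is the main point requiring care. Without further restriction, uniqueness fails globally: one can permute eigenvalues or flip signs of columns. I would therefore argue among continuous couples defined on a ball. Suppose $(\tilde P,\tilde{\mathscr D})$ is another continuous (in particular $C^1$) couple with the same value at $x^*$ and satisfying \eqref{diagonalisation_second_derivee}.

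First handle the diagonal part. Shrinking $\eta$ so that the eigenvalues of $\nabla^2 f(x)$ remain simple and separated (continuity of the spectrum), the diagonal entries of $\tilde{\mathscr D}(x)$ are the eigenvalues of $\nabla^2 f(x)$ in some order. This order is locally constant by continuity, and $B(x^*,\eta)$ is connected, so the order is fixed by its value at $x^*$. Hence $\tilde{\mathscr D}=\mathscr{D}$.

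Next handle the orthogonal part. The $i$-th columns of $\tilde P(x)$ and $P(x)$ are unit vectors in the same one-dimensional eigenspace, so $\tilde P(x)=P(x)E(x)$ with $E(x)$ diagonal with entries in $\{\pm1\}$. The map $E$ is continuous, hence constant on the connected ball, and it equals $I_d$ at $x^*$. Therefore $\tilde P=P$.

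An alternative route to uniqueness is to show that $(\tilde P,\tilde{\mathscr D})(x)$ stays in $U$ for $x$ near $x^*$ by continuity, so that injectivity of $F_{|U}$ applies. That argument only gives uniqueness on a smaller ball, however, so I would use the connectedness argument above, with $\eta$ chosen small enough that the spectral-gap condition holds on the whole ball.
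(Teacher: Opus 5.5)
Your existence argument is the paper's: compose the inverse of the diffeomorphism $F_{|U}\colon U\to V$ from Lemma~\ref{lemdiff} with $x\mapsto\nabla^2 f(x)$ on a ball that is mapped into $V$.

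Your proposal is correct, and the difference lies in the uniqueness step. The paper only observes that, for each $x$, the couple $(P(x),\mathscr{D}(x))$ is the unique preimage of $\nabla^2 f(x)$ \emph{lying in $U$}. What it actually establishes is therefore uniqueness among couples with values in $U$. This is the injectivity route you mention at the end. As you note, for an arbitrary continuous competitor it only gives agreement where the competitor stays in $U$.

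Your argument proceeds in two steps:
\begin{itemize}
\item the labelling of the simple eigenvalues by the diagonal entries of $\tilde{\mathscr D}$ is locally constant, hence constant on the connected ball;
\item $E(x)=P(x)^\top\tilde P(x)$ is then a continuous map into the finite set of diagonal sign matrices, hence identically equal to $E(x^*)=I_d$.
\end{itemize}
This proves a stronger statement: $(P,\mathscr{D})$ is the only continuous couple on the whole ball with the prescribed value at $x^*$. That is closer to what the corollary literally asserts. It requires only that the spectrum of $\nabla^2 f(x)$ stay simple on $B(x^*,\eta)$, which costs nothing since eigenvalues depend continuously on the matrix.

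The paper's version is shorter and keeps everything tied to the single diffeomorphism $F_{|U}$. Yours gives a uniqueness statement that does not depend on the auxiliary neighbourhood $U$.
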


\begin{proof*}
Let $F$ be defined as in Lemma \ref{lemdiff}. Let $U\ni(P^*,\mathscr{D}^*)$ and $V$ be given by Lemma \ref{lemdiff}. Since $F(P^*,\mathscr{D}^*)=P^* \mathscr{D}^* (P^*)^\top=\nabla^2f(x^*)\in V$ and $f\in C^3(\RR^d;\RR)$, we may choose $\eta>0$ small enough to have $\nabla^2f(x)\in V$, for any $x\in B(x^*,\eta).$ By Lemma \ref{lemdiff}, $F$ is a $C^\infty$-diffeomorphism from $U$ onto $V.$ We infer that for every $x\in B(x^*,\eta),$ there exist unique $P(x)\in\mathrm{O}_d$ and $\mathscr{D}(x)\in\D_d(\RR)$ with $\big(P(x),\mathscr{D}(x)\big)\in U$ such that
\begin{gather*}
\nabla^2f(x) = F\big(P(x),\mathscr{D}(x)\big)=P(x) \mathscr{D}(x) P(x)^\top.
\end{gather*}
In particular, $\big(P(x^*),\mathscr{D}(x^*)\big)=(P^*,\mathscr{D}^*)$ and for any $x\in B(x^*,\eta)$, $\big(P(x),\mathscr{D}(x)\big)=F^{-1}\big(\nabla^2f(x)\big).$ Since $f\in C^3(\RR^d;\RR)$ and $F^{-1}$ is $C^\infty,$ it follows that $P$ and $\mathscr{D}$ are $C^1$ on $B(x^*,\eta).$
\medskip
\end{proof*}

\subsection{Some proofs}
\label{subsec:some_proof}

{\bf Proof of \Cref{rem:alignement-rate-cont}.} For $x(t)=(x_1(t),x_2(t))$ the gradient flow reads
\begin{equation}
\dot x_1=\left(-\lambda_1-2a x_2\right)x_1,
\qquad \dot x_2=-\lambda_2 x_2-a x_1^2 .
\label{eq:eqns_premiere_remarque}
\end{equation}

From the first equation and $x(0)=(\alpha_1,\alpha_2)$, we have
\[
x_1(t)=\alpha_{1}\exp\!\left(-\lambda_1 t-2a\int_0^t x_2(s)\,\dd s\right).
\]
By Theorem~\ref{HG_cas_continu}, if $|x(0)|=|(\alpha_1,\alpha_2)|$ is small enough, then the trajectory converges exponentially to $0$, and hence the function $x_2$ is integrable on $[0,\infty)$. Therefore, $L:=\int_0^{\infty}x_2(s)\,\dd s$ is well defined. Setting \(
C:=x_1(0)\exp(-2aL)>0,
\) we may rewrite the above identity as
$$
x_1(t)
=
C\,\exp(-\lambda_1 t)\exp\!\left(2a\int_t^{\infty}x_2(s)\,\dd s\right).
$$
Substituting this expression into the second equation in~\eqref{eq:eqns_premiere_remarque} yields
$$
\dot x_2+\lambda_2x_2=-aC^2\exp(-2\lambda_1 t)\exp\!\left(4a\int_t^{\infty}x_2(s)\,\dd s\right).
$$
By variation of constants,
$$
x_2(t)=\exp(-\lambda_2 t)\alpha_{2}-aC^2\int_0^t\exp(-\lambda_2(t-s))\exp(-2\lambda_1 s)\exp\!\left(4a\int_s^{\infty}x_2(u)\,\dd u\right)\,\dd s.
$$
We split this as
$$
x_2(t)=\exp(-\lambda_2 t)\alpha_{2}-aC^2\int_0^t\exp(-\lambda_2(t-s))\exp(-2\lambda_1 s)\,\dd s+R(t),
$$
where
\begin{equation}
R(t):=-aC^2\int_0^t\exp(-\lambda_2(t-s))\exp(-2\lambda_1 s)\left[\exp\!\left(4a\int_s^{\infty}x_2(u)\,\dd u\right)-1\right]\dd s.
\label{eq:def_R_t}
\end{equation}
Since $\lambda_2>2\lambda_1$, we have
$$
\int_0^t\exp(-\lambda_2(t-s))\exp(-2\lambda_1 s)\,\dd s=\frac{\exp(-2\lambda_1 t)-\exp(-\lambda_2 t)}{\lambda_2-2\lambda_1}.
$$
Hence $\displaystyle x_2(t)=-\frac{aC^2}{\lambda_2-2\lambda_1}\exp(-2\lambda_1 t)+\widetilde C\,\exp(-\lambda_2 t)+R(t)$
for some constant $\widetilde C\in\RR$. It remains to estimate $R(t)$. Since $\int_{0}^{\infty}|x_{2}(u)|\dd u<\infty$, by the mean value theorem,  
$$
\left|\exp\left(4a\int_{s}^{\infty}x_2(u)\dd u\right)-1\right|=\mathrm O\left(\left|\int_{s}^{\infty}x_{2}(u)\dd u\right|\right)=\mathrm O(\exp\left(-\lambda_{1}s\right))
$$
and hence, there exists $\widehat{C}>0$ such that  
$$
|R(t)|\leq \widehat{C}\exp\left(-\lambda_{2}t\right)\int_{0}^{t}\exp\left(\left(\lambda_{2}-3\lambda_{1}\right)s\right)\dd s=\mathrm o\left(\exp\left(-2\lambda_{1}t\right)\right).
$$

Thus, we have
$$
x_2(t)=-\frac{aC^2}{\lambda_2-2\lambda_1}\exp(-2\lambda_1 t)+\mathrm o(\exp(-2\lambda_1 t))
$$
and, consequently,
\begin{equation}
\frac{x_2(t)}{x_1(t)}=-\frac{aC}{\lambda_2-2\lambda_1}\exp(-\lambda_1 t)+\mathrm o(\exp(-\lambda_1 t)).
\label{eq:expansion_x_2_x_1}
\end{equation}
Finally, since $x_{2}(t)/x_{1}(t)\to 0$ as $t\to\infty$,~\eqref{eq:expansion_x_2_x_1} implies  
$$
\begin{aligned}
\dist\!\left(\rho(x(t)),T_{0}\extun\right)&=\dfrac{|x_{2}(t)|}{|x(t)|}=\dfrac{|x_{2}(t)|}{x_{1}(t)}+\mathrm{o}\left(\dfrac{|x_{2}(t)|}{x_{1}(t)}\right)\\
&= \frac{aC}{\lambda_2-2\lambda_1}\exp(-\lambda_1 t)+\mathrm o(\exp(-\lambda_1 t)),
\end{aligned}
$$
which shows~\eqref{eq:estimation_rho_x_t}.
\qed

{\bf Proof of~\Cref{rem:alignment_speed_sharpness_discrete}.}
Set $\mu_{i}=1-\gamma\lambda_{i}\in (0,1)$ for $i=1,\,2$ and notice that, since $\lambda_{2}>2\lambda_{1}$, one has $\m_{1}^{2}>\mu_{2}$. System~\eqref{eq:sys_discrete_eqns} takes the form 
\begin{equation}
\begin{aligned}
x_{1,k+1}
&=
\left(\mu_1 
-
2a\gamma x_{2,k}\right)x_{1,k},
\\
x_{2,k+1}
&=
\mu_2 x_{2,k}
-
a\gamma x_{1,k}^2.
\end{aligned}
\label{eq:sys_discrete_eqns_Appendix}
\end{equation}
On the one hand, let us show the existence of $\bar{\alpha}_{1}\neq 0$ such that, as $k\to\infty$, 
\begin{equation}
x_{1,k}=\bar{\alpha}_{1}\mu_{1}^{k}+\mathrm o\left(\mu_{1}^{k}\right).
\label{eq:expansion_x_1_k}
\end{equation}
Indeed, by the first equation in~\eqref{eq:sys_discrete_eqns}, for every $k\geq 1$, one has 
$$
x_{1,k}=\alpha_{1}\mu_{1}^{k}\prod_{i=0}^{k-1}\left(1
-2a\gamma x_{2,i}/\mu_{1}\right).
$$
From~\eqref{eq:Phi_exp_conv_discret}, if $|x_0|$ is small enough, there exists $c>0$ such that $|x_{2,i}|\leq c\mu_{1}^{i}$ for all $i\geq 0$, and hence $\sum_{i=0}^{\infty}|x_{2,i}|<\infty$, which implies that 
$$
0<P=\prod_{i=0}^{\infty}\left(1
-2a\gamma x_{2,i}/\mu_{1}\right)<\infty.
$$
In turn,~\eqref{eq:expansion_x_1_k} holds with $\bar{\alpha}:=\alpha_{1} P>0$. On the other hand, let us show that, as $k\to\infty$, 
\begin{equation}
x_{2,k}=-\frac{a\gamma\bar\alpha_1^2}{\mu_1^2-\mu_2}\mu_1^{2k}+\mathrm o(\mu_1^{2k}). 
\label{eq:expansion_x_2_k}
\end{equation}
Setting $y_{k}=x_{2,k}/\mu_{1}^{2k}$,~\eqref{eq:sys_discrete_eqns_Appendix}, and~\eqref{eq:expansion_x_1_k} yield 
$$
(\forall\,k\geq 0)\quad y_{k+1}= \frac{\mu_2}{\mu_{1}^{2}}y_{k}-\frac{a\gamma x_{1,k}^{2}}{\mu_{1}^{2(k+1)}}= q y_{k}+b+\eps_{k}, 
% \frac{\mu_2}{\mu_{1}^{2}} y_{k}-\frac{a\gamma\bar{\alpha}_{1}^{2}}{\mu_{1}^{2}}+\eps_k,
$$
where $q= \frac{\mu_2}{\mu_{1}^{2}}\in(0,1)$, $b=- a\gamma\bar{\alpha}_{1}^{2}/\mu_{1}^{2}$, and $\eps_{k}\to 0$. Letting  $\bar{y}=b/(1-q)=-a\gamma\bar{\alpha}_{1}^{2}/\left(\mu_{1}^{2}-\mu_{2}\right)$ and $\eta_k=y_k-\bar{y}$, we have 
\begin{equation}
(\forall\,k\geq 0)\quad \eta_{k+1}=q\eta_{k}+\eps_{k}=q^{k+1}\eta_{0}+\sum_{i=0}^{k}q^{k-i}\eps_i. 
% \frac{\mu_2}{\mu_{1}^{2}} y_{k}-\frac{a\gamma\bar{\alpha}_{1}^{2}}{\mu_{1}^{2}}+\eps_k,
\label{eq:eta_k_recurrence}
\end{equation}
Let $\delta>0$ and pick $N\in\NN$ such that $|\eps_i|\leq\delta$ for all $i\geq N$. Then, for $k>N$, 
\begin{multline*}
\left|\sum_{i=0}^{k}q^{k-i}\eps_i\right|=\left| \sum_{i=0}^{N-1}q^{k-i}\eps_{i}+\sum_{i=N}^{k}q^{k-i}\eps_{i}\right|\\
\leq q^{k}\left| \sum_{i=0}^{N-1}q^{-i}\eps_{i}\right|+\delta\sum_{i=N}^{k}q^{k-i}\leq q^{k}\left| \sum_{i=0}^{N-1}q^{-i}\eps_{i}\right|+\delta \sum_{i=0}^{\infty}q^{i}.
\end{multline*}
Using that $q\in(0,1)$, we get that $\limsup_{k\to\infty}\left|\sum_{i=0}^{k}q^{k-i}\eps_i\right|\leq\delta \sum_{i=0}^{\infty}q^{i},$
and, since $\sum_{i=0}^{\infty}q^{i}<\infty$ and $\delta$ is arbitrary, we deduce that $\sum_{i=0}^{k}q^{k-i}\eps_i\to 0$ and, hence, by~\eqref{eq:eta_k_recurrence}, $\eta_k\to 0$, which implies that 
$$
x_{2,k}=\bar{y}\mu_{1}^{2k}+\mathrm o(\mu_{1}^{2k}),
$$
from which~\eqref{eq:expansion_x_2_k} follows.

Finally, it follows from~\eqref{eq:expansion_x_1_k} and~\eqref{eq:expansion_x_2_k} that
$$
x_k=\left(\bar{\alpha}_1\mu_1^k+ \mathrm o(\mu_1^k)\right)e_1+\left(-\frac{a\gamma \bar{\alpha}_1^2}{\mu_1^2-\mu_2}\mu_1^{2k}+\mathrm o(\mu_1^{2k})\right) e_2,
$$
which yields $|x_k|=|\bar{\alpha}_1|\,\mu_1^k+\mathrm o(\mu_1^k)$, and hence 
$$
\mathrm{dist}\!\left(\rho(x_k),\,T_0\extun\right)=\dfrac{|x_{2,k}|}{|x_k|}=\frac{|a|\gamma \alpha_1^2}{|\bar{\alpha}_1|(\mu_1^2-\mu_2)}\mu_1^{k}+\mathrm o(\mu_1^{k}),
$$
which shows~\eqref{eq:dist_talweg_ex_2}.
\qed

% ---------------------------------

% ---------------------------------

% ---------------------------------

% ---------------------------------

%%%%%%%%%%%%%%%%%%%%%%%%%%%%%%%%%%%%%%%%%%%%%%%%%%%%%%%

\end{document}